\title{On the local Kan structure and differentiation of simplicial manifolds}
\author{}
\date{}
\newcommand{\s}[0]{simplicial\:}
\newcommand{\bu}[0]{\bullet}
\newcommand{\catname}[1]{\mathbf{#1}}
\newcommand{\pr}[0]{\prime}
\newtheorem{thm}{Theorem}[section]
\newtheorem{defi}[thm]{Definition}
\newtheorem{lemm}[thm]{Lemma}
\newtheorem{coro}[thm]{Corollary}
\newtheorem{prop}[thm]{Proposition}
\theoremstyle{remark}
\newtheorem{rema}[thm]{Remark}
\newtheorem{example}[thm]{Example}
\author[F. Dorsch]{Florian Dorsch}
\address{Florian Dorsch, Georg-August-Universität Göttingen,
Institut für Mathematik, Bunsenstr. 3-5, 37073 Göttingen}
\email{florian.dorsch@mathematik.uni-goettingen.de}
\begin{document}
\maketitle
\begin{abstract}
We prove that arbitrary simplicial manifolds satisfy Kan conditions in a suitable local sense. This allows us to expand a technique for diﬀerentiating higher Lie 
groupoids worked out in \cite{ZR} to the setting of general simplicial manifolds. Consequently, we derive a method to differentiate simplicial manifolds into higher Lie algebroids.

 % In this paper we extend a method of differentiating higher Lie groupoids which is originally %sketched in [Se06] and explicitly carried out in \cite{ZR} to the setting of general \s %manifolds. The idea is that simplicial manifolds which lack the extra conditions making them %higher Lie groupoids still satisfy "local Kan conditions" which suffices to perform the %constructions in \cite{ZR}.
\end{abstract}
\tableofcontents

\section{Introduction}

Unlike simplicial sets and simplicial schemes, for which one can readily find many examples which fail to satisfy Kan conditions, it is hard to find natural
 and interesting examples of simplicial manifolds that do not satisfy (at least weak versions of) Kan conditions. This raises the question of whether simplicial 
manifolds always possess some structure near their base space by satisfying "local Kan conditions". Indeed, this holds true. In this article, we provide a precise
answer to this question.\par
This immediately allows us to differentiate simplicial manifolds in a Lie theoretical way. Recall that in ordinary Lie theory, the diﬀerentiation functor sending a
 Lie group to its Lie algebra only depends on the local structure of the Lie group near the identity element. Something similar should be expected for the
 diﬀerentiation of simplicial manifolds due to their local Kan structure.\par 
 Our strategy is to extend a method of differentiating higher Lie groupoids, which was originally sketched in [Se06] and explicitly carried out in \cite{ZR}, to the setting of general simplicial manifolds.\par
The differentiation functor constructed in \cite{ZR}, which sends a higher Lie groupoid to its respective
infinitesimal counterpart, is part of the Lie theoretic correspondence between higher Lie groupoids and
higher Lie algebroids which is the subject of active study within the realm of higher Lie theory. Higher Lie groupoids are simplicial manifolds characterized by internal horn filler conditions called Kan conditions. In contrast, higher Lie algebroids can be viewed as differential positively graded manifolds, also referred to as NQ-manifolds.\par
The project initiated in \cite{ZR} and set to be concluded in a subsequent article aims to show that any higher Lie groupoid $X$ can be differentiated to a higher Lie algebroid whose underlying graded vector bundle is described by the tangent complex of $X$.
The latter is a replacement for the Lie algebroid of a Lie groupoid and serves as a tangent object for higher Lie groupoids. It first appeared in \cite{getzler} and is defined as the chain complex of vector bundles that is obtained when applying a suitable generalization of the Dold-Kan functor to the simplicial vector bundle $TX$ consisting of the tangent bundles $TX_n,\:n\in \mathbb{N}_0$.\par
In \cite{severa}, Severa introduces a method for differentiating higher Lie groupoids, resulting in the construction of presheaves of graded manifolds. Having introduced an infinitesimal object or “fat point" $D_{\bu}$ for higher Lie groupoids, a higher Lie groupoid $X$ is assigned to the presheaf $\text{Hom}(D_{\bu},X_{\bu})$ which is called the tangent functor of $X$. As the central finding in \cite{ZR} it is shown
that the tangent functor $\text{Hom}(D_{\bu},X_{\bu})$ of a higher Lie groupoid $X$ is representable by its tangent complex. Thus one obtains a more explicit version of the construction presented in \cite{severa}.\par
Considering a higher groupoid $X$ as a collection of manifolds $(X_n)_{n\in \mathbb{N}}$, the base space $X_0$ is uniquely embeddable into the higher levels $X_n,\:n \in \mathbb{N}$, thus may be regarded as a submanifold of them. The higher Lie algebroid described by the tangent complex of $X$ now should only depend on the local structure of $X$ near its base space $X_0$, which will be shown inter alia in section 4 of this paper.\par
Notice that for general simplicial manifolds which lack the extra criteria making them higher Lie groupoids, their tangent complex can still be defined point-wise.
Because of the above-mentioned local structure of simplicial manifolds, these point-wise tangent complexes can be patched together into a well-defined  tangent complex of 
vector bundles. \par
We then adapt the argumentation in \cite{ZR} to obtain a differentiation functor for general simplicial manifolds.
In parallel with \cite{ZR}, the primary result of this paper is that for a simplicial manifold $X$ the presheaf $\text{Hom}(D_{\bu},X_{\bu})$ (which can also be defined for general \s manifolds) is representable by the N-manifold whose underlying graded vector bundle is 
described by the tangent complex of $X$. A subsequent article will deal with the computation of the $L_{\infty}$-algebroid structure and higher Lie brackets on the tangent complex.\par
Many naturally occurring objects, such as those derived from singular foliations, often possess inherent simplicial structures. The results of this article provide a framework to derive the infinitesimal structure of these simplicial manifolds directly, bypassing the need to verify the Kan condition, i.e. without confirming whether they qualify as higher groupoids. This approach broadens the scope of differentiation methods applicable to naturally occurring simplicial structures.\par
\vspace{.5cm}
\noindent \textbf{Acknowledgments.}\:\,
I would especially like to thank Leonid Ryvkin and Chenchang Zhu for fruitful discussions and support. I also wish to thank Bertrand Toën,  Miquel Cueca, Ruben Louis, Hao Xu, and all the other members of the Higher Structures seminar in Göttingen as well as the participants of the "BGW Higher Structures PhD retreat 2023" for inspiring conversations, ideas and suggestions. This work was supported by the Research Training Group 2491.

%As for general simplicial manifolds which lack the extra criteria making them higher Lie groupoids, %their tangent complex can be defined point-wise. Yet it is not initially obvious why this should be a %complex of well-defined vector bundles since the construction of the tangent complex of a higher Lie %groupoid does involve Kan conditions. 
%However, as shown in section 3, simplicial manifolds always prove to posses some structure 
%near their base space by satisfying “local Kan conditions”. As a result there exists a well-defined %notion of tangent complex for \s manifolds.\par
%This gives rise to the question whether the argumentation
%in \cite{ZR} can be adapted to also obtain a differentiation functor for general simplicial manifolds.
\section{Simplicial objects, (generalized) horns and Kan conditions}
In this first section we revisit some fundamental concepts related to \s objects and
%horn filler
Kan conditions which provides a backdrop for the subsequent discussions of this paper.  For a more thorough exploration of this subject, we direct the interested reader to the works of \cite{duskin}, \cite{may} and \cite{chenchang}.\\\\
Let $\Delta$ be the simplex category which is the category of finite ordinals $[n]:=\{0,\dots,n\}$, $n\in \mathbb{N}_0$, with order-preserving maps as morphisms. Let $\Delta_{\leq n}$ denote the full subcategory on the objects $[0],\dots,[n]$ and $\Delta_{+}$ be the wide subcategory whose morphisms are injective order-preserving functions.
\begin{defi}
   Let $\mathcal{C}$ be a category. A contravariant functor from $\Delta, \Delta_{\leq n},\Delta_{+}$ to $\mathcal{C}$ is called a \textbf{\s, $\mathbf{n}$-\s, semi\s object in $\mathcal{C}$}, respectively. Natural transformations between \s, $n$-\s, semi\s objects are called \s, $n$-\s, semi\s morphisms, respectively. A covariant functor from $\Delta$ to $\mathcal{C}$ is called a \textbf{co\s object} \textbf{in} $\mathbf{\mathcal{C}}$. 
\end{defi}
\noindent Morphisms in the simplex category can be uniquely factorized as compositions of special maps, called cofaces and codegeneracies. Cofaces are the injective order-preserving maps $\delta^n_j:[n-1]\rightarrow [n]$ skipping an element $j$. Codegeneracies are the surjective order-preserving maps $\sigma^n_j:[n+1]\rightarrow [n]$ mapping to an element $j$ twice.\\
Hence, a \s object $X$ in a category $\mathcal{C}$ can be described a collection of objects $(X_n)_{n\in \mathbb{N}_0}$ in $\mathcal{C}$ together with morphisms 
\begin{align*}
    d_i^n:=X(\delta_i^n):X_n=X([n])\rightarrow X_{n-1}=X([n-1]) \;\;\;\;\;\;\text{and}\;\;\;\;\;\;s_i^n:=X(\sigma_i^n):X_n=X([n])\rightarrow X_{n+1}=X([n+1])
\end{align*} 
called faces and degeneracies. Therefore a \s object $X$ is also denoted by $X_{\bu}$ in the following. Faces and degeneracies satisfy the following so-called \textbf{\s identities}:
\begin{gather*}
    d_i^{n}d_j^{n+1}=d_{j-1}^{n}d_i^{n+1}\,\,\,\:\:\:\:\text{if}\,\:\:\:i<j\\
    s_i^ns_j^{n-1}=s_{j+1}^ns_i^{n-1}\,\,\,\,\:\:\:\:\text{if}\,\:\:\:i\leq j\\
    d_i^ns_j^{n-1}=\begin{cases}s_{j-1}^{n-2}d_i^{n-1}\,\,\,\,\,\:\:\:\:\text{if}\,\:\:\:i<j\\
    \text{id}\:\:\:\:\:\:\:\:\:\,\,\,\,\,\,\,\,\,\,\:\:\:\:\:\:\text{if}\,\:\:\:i=j\,\, \text{or}\,\, i+1=j\\
    s_j^{n-2}d_{i-1}^{n-1}\,\,\,\,\,\:\:\:\:\text{if}\,\:\:\:i>j+1.
    \end{cases}
\end{gather*}
Similar considerations as above can be made for $n$-\s and semi\s objects. For simplicity we will omit the upper indices of faces and degeneracies from now on if possible.\\
%\noindent The inclusions of the subcategories $\Delta_{\leq n},\Delta_{+}\rightarrow \Delta$ %induce the $n$-truncation functor $tr_n:\catname{sSet}_{geq} $
\\
For a category $\mathcal{C}$, let $s\mathcal{C}$ denote the category of \s objects in $\mathcal{C}$. Let $\text{Yon}:\Delta \rightarrow \catname{sSet}$ be the Yoneda embedding. For $n\in \mathbb{N}_0$, the \s set $\text{Yon}([n]):=\Delta^n$ is called the  standard simplicial $n$-simplex. The image $\text{Im}(\text{Yon}(\delta_j^n):\Delta^{n-1}\rightarrow \Delta^n)=:\partial_j\Delta^n$ is a sub\s set of $\Delta^n$, called the $j$th boundary of $\Delta^n$. For $i\in [n]$, the \s subset $\Delta^n\supseteq\bigcup_{j\in [n]\setminus\{i\}}\partial_j\Delta^n=:\bigwedge^n_i$ is called the \textbf{$\mathbf{(n,i)}$-horn}. More generally, for a subset $I\subseteq [n]$ define $\bigcup_{j\in [n]\setminus I}\partial_j\Delta^n=:\bigwedge^n_I$ to be the $\mathbf{(n,I)}$\textbf{-horn}. Given a \s set $X$, there exist natural bijections
\begin{gather*}
    \text{hom}_{\catname{sSet}}(\bigwedge^n_I,X)\cong \bigwedge^n_I(X):=
    \{(x_{n-1}^j)_{j\in [n]\setminus I}\in X_{n-1}^{\times [n]\setminus I}\,|\,d_{j_1}(x_{n-1}^{j_2})=d_{j_2-1}(x_{n-1}^{j_1})\:\:\text{for}\:\:j_1<j_2;\:j_1,j_2\in [n]\setminus I\}.
\end{gather*}
The set $\bigwedge^n_I(X)$ is called the $\mathbf{(n,I)}$\textbf{-horn space} of $X$ (see also \cite[sec.\,2.1]{duskin}).\\
\begin{figure}
\centering
\begin{tikzpicture}[line join = round, line cap = round]

\coordinate [label=below:2] (2a) at (1,0,0);
\coordinate [label=below:0] (0a) at (0,0,0);
\coordinate [label=above:1] (1a) at (0.5,{sqrt(2)},0);

\begin{scope}[decoration={    markings,    mark=at position 0.5 with {\arrow{to}}}    ]

    \draw[thick,postaction={decorate}] (0a)--(1a);
    \draw[thick,postaction={decorate}] (1a)--(2a);
   
\end{scope}

\node[] at (0.5,-1.5) {$\bigwedge^2_1$};

\coordinate [label=below:2] (2b) at (4.5,0,0);
\coordinate [label=below:0] (0b) at (3.5,0,0);
\coordinate [label=above:1] (1b) at (4,{sqrt(2)},0);

\begin{scope}[decoration={    markings,    mark=at position 0.5 with {\arrow{to}}}    ]

    \draw[thick,postaction={decorate}] (0b)--(1b);
   % \draw[thick,postaction={decorate}] (1)--(2);
   
\end{scope}

\node[] at (4,-1.5) {$\bigwedge^2_{\{0,1\}}$};

\coordinate [label=above:3] (3c) at (8,{sqrt(2)},0);
\coordinate [label=left:1] (1c) at ({8-.5*sqrt(3)},0,-.5);
\coordinate [label=below:0] (0c) at (8,0,1);
\coordinate [label=right:2] (2c) at ({8+.5*sqrt(3)},0,-.5);

\begin{scope}[decoration={markings,mark=at position 0.5 with {\arrow{to}}}]

\draw[fill=gray,fill opacity=.7] (1c)--(0c)--(3c)--cycle;
\draw[fill=gray,fill opacity=.7] (0c)--(1c)--(2c)--cycle;
%\draw[fill=gray,fill opacity=.5] (2)--(0)--(3)--cycle;
%\draw[fill=white,fill opacity=.5] (2)--(1)--(0)--cycle;
%\draw[densely dotted,postaction={decorate}] (0)--(2);
\draw[postaction={decorate}] (0c)--(2c);
\draw[postaction={decorate}] (0c)--(1c);
\draw[postaction={decorate}] (1c)--(2c);
\draw[dotted,postaction={decorate}] (2c)--(3c);
\draw[postaction={decorate}] (1c)--(3c);
\draw[postaction={decorate}] (0c)--(3c);
\end{scope}

\node[] at (8.25,-1.5,0) {$\bigwedge^3_{\{0,1\}}$};

\coordinate [label=above:3] (3d) at (12.5,{sqrt(2)},0);
\coordinate [label=left:1] (1d) at ({12.5-.5*sqrt(3)},0,-.5);
\coordinate [label=below:0] (0d) at (12.5,0,1);
\coordinate [label=right:2] (2d) at ({12.5+.5*sqrt(3)},0,-.5);

\begin{scope}[decoration={markings,mark=at position 0.5 with {\arrow{to}}}]

\draw[fill=gray,fill opacity=.7] (1d)--(0d)--(3d)--cycle;
\draw[fill=gray,fill opacity=.7] (2d)--(1d)--(3d)--cycle;
%\draw[fill=gray,fill opacity=.5] (2)--(0)--(3)--cycle;
%\draw[fill=white,fill opacity=.5] (2)--(1)--(0)--cycle;
%\draw[densely dotted,postaction={decorate}] (0)--(2);
\draw[dotted, postaction={decorate}] (0d)--(2d);
\draw[postaction={decorate}] (0d)--(1d);
\draw[postaction={decorate}] (1d)--(2d);
\draw[postaction={decorate}] (2d)--(3d);
\draw[postaction={decorate}] (1d)--(3d);
\draw[postaction={decorate}] (0d)--(3d);
\end{scope}

\node[] at (12.75,-1.5,0) {$\bigwedge^3_{\{1,3\}}$};
\end{tikzpicture}
\caption{The \s set $\bigwedge^n_I$ is obtained by removing the non-degenerate $n$-face as well as the non-degenerate $(n-1)$-faces indexed by $i\in I$ from $\Delta^n$. }
\end{figure}

\begin{defi}
A \textbf{Grothendieck singleton pretopology} on a category $\mathcal{C}$ consists of a collection of morphisms, called covers, satisfying the following properties:
\begin{itemize}
    \item If $U\rightarrow X$ is a cover and $Y\rightarrow X$ is a morphism in $\mathcal{C}$, the corresponding pullback $Y\times_X U$ exists in $\mathcal{C}$ and the natural morphism $Y\times_XU\rightarrow Y$ is a cover again.
    \item The composition of two covers is a cover.
    \item Isomorphisms are covers.
\end{itemize}
\end{defi}
%cite Zhu
\begin{defi}
   Let $X$ be a \s object in a category $\mathcal{C}$ which is equipped with a Grothendieck singleton pretopology. For $n\in \mathbb{N},\:i\in \mathbb{N}_0$, $0\leq i\leq n$, $X$ satisfies the \textbf{Kan condition} $\text{Kan}\,(n,i)$ if the Kan projection 
   \begin{gather*}
       p^n_i:=d_0^n\times \dots \times \widehat{d_i^n}\times \dots\times d_n^n:X_n\rightarrow \bigwedge^n_i(X)
   \end{gather*}
  has a well-defined codomain and is a cover. Here, the horn space $\bigwedge^n_i(X):=\lim_{\bigwedge^n_i}X$ is defined as the weighted limit\footnote{For an overview of the concept of weighted limits we refer to \cite{riehl}.} of $X:\Delta^{op}\rightarrow \mathcal{C}$ which is weighted by the \s set $\bigwedge^n_i$.\\
  For $n\in \mathbb{N},\:I\subseteq [n]$, $X$ satisfies the \textbf{generalized Kan condition} $\text{Kan}\,(n,I)$ if the horn space $\bigwedge^n_I(X):=\lim_{\bigwedge^n_I}X$ exists and the horn projection 
  \begin{gather*}
        p^n_I:=\bigsqcap_{j\notin I}d_j^n:X_n\rightarrow \bigwedge^n_I(X)
  \end{gather*}
  is a cover. The \s object $X$ satisfies strong Kan conditions $\text{Kan}\,!(n,i)$, $\text{Kan}\,!(n,I)$ if the respective horn projections are isomorphisms. Similarly, one can define (generalized) Kan conditions for $n$-simplicial and semisimplicial objects in $\mathcal{C}$.
\end{defi}

\begin{defi}
   Let $(\mathcal{C},\mathscr{T})$ be a category with Grothendieck singleton pretopology.
   %Let $\mathcal{C}$ have a terminal object $\ast$ and assume that for any object $c$ in $\mathcal{C}$ %the unique morphism $c\rightarrow \ast$ is a cover.
   An \textbf{(internal) $\mathbf{n}$-groupoid object} in $(\mathcal{C},\mathscr{T})$ (for $n\in \mathbb{N}\cup \{\infty\}$) is defined as a \s object $X$ in $\mathcal{C}$ which satisfies the Kan conditions $\text{Kan}\,(m,i)$ for $0\leq i\leq m\leq n$ and $\text{Kan}!\,(m,i)$ for $0\leq i\leq m\geq n+1$.
\end{defi}
\noindent The existence of the horn spaces $\bigwedge^n_i(X)$ is not immediately clear and proven in \cite[Cor.\,2.5]{henri}. Internal $n$-groupoids in $(\catname{Mfd},\mathscr{T}_{\text{surj. subm.}})$ are called \textbf{Lie $\mathbf{n}$-groupoids} whereas internal $\infty$-groupoids in $(\catname{Mfd},\mathscr{T}_{\text{subm.}})$ are \textbf{local Lie $\infty$-groupoids}.\\

\noindent Internal $\mathbf{\infty}$-groupoids in $(\mathcal{C},\mathscr{T})$ are also called \textbf{Kan \s objects} in  $(\mathcal{C},\mathscr{T})$. It is well-known that \s vector spaces are Kan \s objects in $(\catname{Vect},\mathscr{T}_{\text{surj. lin. maps}})$ (see for instance \cite[Thm.\,17.1]{may}). Besides, they satisfy the following generalized Kan conditions.

\begin{lemm}
\label{yone}
Let $V_{\bu}$ be a \s vector space. For $n,m\in \mathbb{N},\:i\in \mathbb{N}_0$, $0\leq i\leq m<n$, the generalized horn projections  $p^n_{\{i,m+1,\dots,n\}}:V_n\rightarrow \bigwedge^n_{\{i,m+1,\dots,n\}}(V)\subseteq V_{n-1}^{\oplus ([m]\setminus\{i\})}$ are surjective.
\end{lemm}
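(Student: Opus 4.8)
The plan is to prove surjectivity by an explicit inductive horn-filling, following the classical argument that a simplicial group satisfies the Kan extension condition (cf.\ \cite[Thm.\,17.1]{may}), and exploiting that $V_{\bu}$ is a simplicial \emph{vector} space, so that faces, degeneracies, and addition/subtraction are all at our disposal. Only the simplicial identities and the linear structure will be used; no Kan condition on $V_{\bu}$ is invoked.

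Fix $i\le m<n$ and abbreviate $K:=[m]\setminus\{i\}=\{0,\dots,m\}\setminus\{i\}$, the index set of the faces occurring in the horn; by the description of horn spaces recalled above, an element of $\bigwedge^n_{\{i,m+1,\dots,n\}}(V)$ is a tuple $(x^j)_{j\in K}$ with $x^j\in V_{n-1}$ satisfying $d_a x^b=d_{b-1}x^a$ whenever $a<b$ in $K$. Given such a tuple, I will construct elements $v_r\in V_n$ for $r=-1,0,\dots,m$, starting from $v_{-1}:=0$, subject to the invariant that $d_j v_r=x^j$ for every $j\in K$ with $j\le r$. The passage from $v_r$ to $v_{r+1}$ is: if $r+1\notin K$ (i.e.\ $r+1=i$) set $v_{r+1}:=v_r$; if $r+1\in K$ set $e:=x^{r+1}-d_{r+1}v_r\in V_{n-1}$ and $v_{r+1}:=v_r+s_{r+1}(e)$, where $s_{r+1}=s^{n-1}_{r+1}\colon V_{n-1}\to V_n$ is defined since $r+1\le m\le n-1$ (this is the one place the hypothesis $m<n$ is needed). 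Then $v:=v_m$ satisfies $p^n_{\{i,m+1,\dots,n\}}(v)=(x^j)_{j\in K}$, giving surjectivity.

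The only real content is verifying that the invariant is preserved when $r+1\in K$ (the case $r+1=i$ being vacuous, since then $\{j\in K:j\le r+1\}=\{j\in K:j\le r\}$). That $d_{r+1}v_{r+1}=x^{r+1}$ is immediate from $d_{r+1}s_{r+1}=\mathrm{id}$. For $j\in K$ with $j\le r$, the simplicial identity $d_j s_{r+1}=s_r d_j$ gives $d_j v_{r+1}=x^j+s_r(d_j e)$, so one must show $d_j e=0$. Here the compatibility conditions of the horn tuple pay off: using $d_j d_{r+1}=d_r d_j$ (valid since $j<r+1$) together with the inductive hypothesis $d_j v_r=x^j$, and then the horn relation $d_j x^{r+1}=d_r x^j$, one computes $d_j e=d_j x^{r+1}-d_r(d_j v_r)=d_r x^j-d_r x^j=0$. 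The main — indeed the only — obstacle is recognizing that the compatibility conditions are exactly what forces each successive ``error term'' $e$ into the kernels of the already-matched face maps $d_j$ ($j\in K$, $j\le r$), which is precisely what licenses correcting $d_{r+1}$ by the single degeneracy term $s_{r+1}(e)$ without disturbing the lower faces.

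I would finally remark that telescoping the construction yields a closed formula expressing $v$ as a sum of degeneracies applied to alternating sums of the $x^j$, but the inductive form above is cleaner to check; and that the same argument, with the obvious bookkeeping, covers the degenerate edge cases $i=0$ and $i=m$.
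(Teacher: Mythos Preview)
Your proof is correct. Both your argument and the paper's establish surjectivity by producing an explicit filler, but the paper takes the closed-form route you allude to at the end: it writes down
\[
v_n \;=\; \sum_{l=0}^{m-1}(-1)^l\sum_{\substack{0\le j_0<\dots<j_l\le m\\ j_0,\dots,j_l\neq i}} s_{j_l}d_{j_l}\cdots s_{j_1}d_{j_1}s_{j_0}(x^{j_0})
\]
and then verifies $d_k v_n = x^k$ for each $k\in K$ by a term-by-term cancellation, splitting the sum according to the position of $k$ relative to the chain $j_0<\dots<j_l$ and pushing $d_k$ through via the simplicial identities. Your iterative Moore-style filling, correcting one face at a time with $v_{r+1}=v_r+s_{r+1}(x^{r+1}-d_{r+1}v_r)$, is the telescoped form of the same object; the gain is that you package all of the cancellation into the single check $d_j e=0$, which makes the role of the horn compatibility relation $d_j x^{r+1}=d_r x^j$ completely transparent, whereas in the paper's verification that relation is used implicitly inside several displayed manipulations. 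Your version is shorter and easier to audit; the paper's has the (minor) advantage of exhibiting the linear right inverse as a single explicit formula.
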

\begin{proof}
See Lemma \ref{proofapp1} in the appendix.
\end{proof}

 \section{Local structure of simplicial manifolds and the tangent complex}
 Using the facts that manifolds are locally approximated by their tangent spaces and that for a simplicial manifold $X_{\bu}$ the simplicial vector spaces $T_{x_0}X_{\bu},\:x_0\in X_0$, are Kan complexes, it will be shown that simplicial manifolds always satisfy Kan conditions on the level of germs at the base space $X_0$. In particular, this makes it possible to find a well-defined notion of tangent complex for \s manifolds.
 \subsection{Local structure of \s manifolds}
%Before stating the main result of this section we recall the following basic result about the fiber %product of smooth manifolds.
%\begin{lemm}
%\label{zed}
%Let $F:M\rightarrow S$, $G:N\rightarrow S$ be smooth maps between smooth manifolds and transversal to %each other. Then the fiber product $M\times_SN$ is a well-defined embedded submanifold of $M\times N$ %and for a point $(p,q)\in M\times_{S}N$ the tangent space is given by $T_{(p,q)}M\times_SN=\{(v,w)\in %T_pM\times T_qN\,|\,TF(v)=TG(w)\}$.
%\end{lemm}
%\begin{proof}
%See for instance [We16,\,Thm.\,5.47].
%\end{proof}
\noindent Let $X_{\bu}$ be a \s manifold and regard it as a semi\s manifold by forgetting the degeneracy maps. As proven in the next theorem, one can always find a semi\s submanifold $U_{\bu}$ of $X_{\bu}$ consisting of open neighborhoods of the base space $X_0$ such that $U_{\bu}$ satisfies the undermentioned (generalized) Kan conditions.
\begin{thm}
\label{yummiiiiiiiiii}
Let $X_{\bu}$ be a \s manifold. There exist open neighborhoods $U_n$ of $X_0\subseteq X_n$, $n\in \mathbb{N}_0$, such that the face maps of $X_{\bu}$ can be restricted to well-defined maps between the $U_n$’s and the semi\s manifold $U_{\bu}$ satisfies the following (generalized) Kan condition for $n\in \mathbb{N}$:
\begin{enumerate}
 \renewcommand{\labelenumi}{\roman{enumi})}
    \item $p^n_i:U_n\rightarrow \bigwedge^n_i(U)$ is a submersion for $0\leq i\leq n$, where $\bigwedge^n_i(U)$ is a well-defined embedded submanifold of $U_{n-1}^{\times n}$
    \item $p^n_{\{i,m+1,\dots,n\}}:U_n\rightarrow \bigwedge^n_{\{i,m+1,\dots,n\}}(U)$ is a submersion for $1\leq m<n,\:0\leq i\leq m$, where $\bigwedge^n_{\{i,m+1,\dots,n\}}(U)$ is a well-defined embedded submanifold of $U_{n-1}^{\times m}$.
\end{enumerate}
\end{thm}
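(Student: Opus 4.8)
The plan is to build the neighborhoods $U_n$ inductively in $n$, using at each stage the fact that the fiberwise tangent simplicial vector spaces $T_{x_0}X_\bu$ are Kan complices together with Lemma~\ref{yone}. First I would fix, once and for all, that $X_0$ embeds as a closed submanifold of each $X_n$ via the iterated degeneracies; write $\iota_n\colon X_0\hookrightarrow X_n$ for this embedding and note $d_j\circ\iota_n=\iota_{n-1}$ for every $j$, so that all relevant face maps agree on the image of $X_0$. The key linear input is that for $x_0\in X_0$ the differential of the horn projection $p^n_i$ at $\iota_n(x_0)$ is, after identifying tangent spaces, exactly the horn projection $(T_{x_0}X)_n\to\bigwedge^n_i(T_{x_0}X)$, and likewise for the generalized horns; since simplicial vector spaces are Kan and satisfy the generalized Kan conditions of Lemma~\ref{yone}, these linear maps are surjective. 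The point is that surjectivity of a smooth map's differential is an open condition, so it persists on a neighborhood — this is what lets us pass from the purely algebraic statement about simplicial vector spaces to a geometric statement about $U_\bu$.

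The induction would run roughly as follows. Suppose $U_0,\dots,U_{n-1}$ have already been chosen, open in $X_0,\dots,X_{n-1}$, containing $X_0$, closed under the face maps, and such that all horn projections in levels $<n$ that are required in (i),(ii) are submersions onto embedded submanifolds of the appropriate products $U_{n-1}^{\times\,\bullet}$. To construct $U_n$: first note that the horn space $\bigwedge^n_I(U)$, defined as a subset of a product of copies of $U_{n-1}$ cut out by the simplicial compatibility equations $d_{j_1}x^{j_2}=d_{j_2-1}x^{j_1}$, is — near the diagonal image of $X_0$ — a transverse intersection, hence an embedded submanifold, provided the face maps $d_j\colon U_{n-1}\to U_{n-2}$ are submersions, which holds after possibly shrinking $U_{n-1}$ (using that $T_{x_0}X_\bu$ Kan implies in particular all its faces, composed appropriately, are surjective). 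One computes the linearization of the defining equations at a diagonal point and checks it has constant full rank; this is where the concrete combinatorics of the simplicial identities enters. Then set $V_n:=\{x\in X_n : d_j(x)\in U_{n-1}\ \forall j\}$, an open neighborhood of $X_0$ in $X_n$ on which every $p^n_I$ occurring in (i),(ii) lands in the corresponding $\bigwedge^n_I(U)$; on $V_n$ each such $p^n_I$ has surjective differential at every point of $X_0$ by the linear computation above, hence — shrinking once more to an open $U_n\subseteq V_n$ on which the rank stays maximal — each $p^n_I$ is a submersion. Finally one must re-examine whether shrinking $U_n$ forces re-shrinking of lower $U_k$'s: it does not increase the constraints, so a single downward pass suffices, or alternatively one argues that only finitely many horn conditions constrain each level and organizes the shrinking accordingly.

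The main obstacle I expect is two-fold and both parts are "bookkeeping made delicate by the simplicial identities." The first is verifying that $\bigwedge^n_I(U)$ is genuinely a manifold near $X_0$: this requires showing the system of equations $d_{j_1}x^{j_2}=d_{j_2-1}x^{j_1}$ cuts out something smooth of the expected dimension, i.e. that the equations are "independent enough," and the clean way to see this is to observe that $\bigwedge^n_I$ is a collapsible/acyclic simplicial set so the weighted limit is computed by an iterated pullback along submersions — one would reduce to finitely many transverse fiber products of submersions and invoke that such pullbacks exist as manifolds. The second, and the genuine technical heart, is matching the linearization of $p^n_I$ at a point of $X_0$ with the simplicial-vector-space horn projection of $T_{x_0}X_\bu$ compatibly across all $n$: one has to be careful that the identifications $T_{\iota_n(x_0)}X_n\cong (T_{x_0}X_\bu)_n$ are simplicially natural (intertwine $d_j$ with $(T_{x_0}X)(d_j)$), which follows from naturality of $T$ but must be stated, and then the surjectivity statements from the Kan property of $T_{x_0}X_\bu$ and from Lemma~\ref{yone} apply verbatim. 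Once those two points are in hand, the openness of "being a submersion" and a finite nested shrinking finish the argument.
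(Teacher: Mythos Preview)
Your proposal is correct and follows essentially the same strategy as the paper's proof: induction on $n$, using that the differentials of the horn projections at points of $X_0$ coincide with the (surjective, by the Kan property and Lemma~\ref{yone}) horn projections of $T_{x_0}X_{\bu}$, followed by shrinking via openness of submersivity. The one point the paper makes more explicit than your sketch is the construction of the horn spaces $\bigwedge^{n+1}_I(U)$ as manifolds: they are built recursively as iterated fiber products, and the submersions along which one pulls back are exactly the generalized horn projections $p^n_{\{i,m,\dots,n\}}$ from condition~(ii) at level $n$ --- this is why (ii) is carried in the inductive statement, a connection your ``clean way'' gestures at but does not spell out.
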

\begin{proof}
We first note that for a \s manifold $X_{\bu}$ its degeneracy maps $s_i:X_n\rightarrow X_{n+1}$ are embeddings since they have smooth left inverses. In particular, the base space $X_0$ can be embedded (uniquely) via iterated degeneracy maps into the manifolds $X_n,\:n\geq 1$. When identifying a given point $x_0\in X_0$ with $(s_0)^n(x_0),\:n\in \mathbb{N}$, the tangent spaces $(T_{x_0}X_n)_{n\in \mathbb{N}}$ form a \s vector space $T_{x_0}X_{\bu}$.\\
We now prove the statement inductively.
For $n=1$ set $U_0:=X_0$ and $\bigwedge^1_0(U)=\bigwedge^1_1(U):=X_0$. Because of the \s identity $d_0^1s_0^0=d_1^1s_0^0=id_{X_0}$ one can find an open neighborhood $U_1$ of $X_0\subseteq X_1$ such that ${d_0}_{|U_1},{d_1}_{|U_1}$ are submersions. Then the 1-Kan conditions (condition i)) are satisfied and condition ii) is empty. Define $\bigwedge^2_{\{0,2\}}(U)=\bigwedge^2_{\{1,2\}}(U):=U_1$ and the 2-horn spaces $\bigwedge^2_i(U),\:0\leq i\leq 2$, as ordinary fiber products.\\
Now suppose that the statement is true until $n$ (and that the corresponding horn spaces exist). To show that i) and ii) hold for $n+1$, first define the generalized horn spaces $\bigwedge^{n+1}_{\{i,m+1,\dots,n+1\}}(U),  \:0\leq i\leq m\leq n$. Let us recall that $\bigwedge^{n+1}_{\{i,m+1,\dots,n+1\}}(U)$ is the set of $m$-tuples $(x_n^0,\dots,\widehat{x_n^i},\dots,x_n^m)\in U_n^{\times m}$ satisfying $d_{i_1}(x_n^{i_2})=d_{i_2-1}(x_n^{i_1})$ for $i_1<i_2;\:i_1,i_2\in \{0,\dots,m\}\setminus\{i\}$. For $m\geq 2,\:0\leq i\leq m$, an $m$-tuple $(x_n^0,\dots,\widehat{x_n^i},\dots,x_n^m)\in U_n^{\times m}$ lies in $\bigwedge^{n+1}_{\{i,m+1,\dots,n+1\}}(U)$ if and only if
\begin{gather*}
    (x_n^0,\dots,\widehat{x_n^i},\dots,x_n^{m-1})\in \bigwedge^{n+1}_{\{i,m,\dots,n+1\}}(U)\:\:\:\text{and}\:\:\:d_{m-1}\times \dots\times d_{m-1}((x_n^0,\dots,\widehat{x_n^i},\dots,x_n^{m-1}))=p^n_{\{i,m,\dots,n\}}(x_n^m)\:\:\text{if}\:\:\:i<m\\
    (x_n^0,\dots,x_n^{m-2})\in \bigwedge^{n+1}_{\{m-1,m,\dots,n+1\}}(U)\:\:\:\text{and}\:\:\:d_{m-2}\times \dots\times d_{m-2}((x_n^0,\dots,x_n^{m-2}))=p^n_{\{m-1,m,\dots,n\}}(x_n^{m-1})\:\:\:\:\:\text{if}\:\:\:i=m
\end{gather*}
which allows to define the generalized horn spaces recursively.
\noindent Set $\bigwedge^{n+1}_{\{0,2,\dots,n+1\}}(U)=\bigwedge^{n+1}_{\{1,2,\dots,n+1\}}(U):=U_n$. Supposing that the horn spaces $\bigwedge^{n+1}_{\{i,r+1,\dots,n+1\}}(U)$ exist for $1\leq r\leq m-1,\:0\leq i\leq r$, recursively define\footnote{As illustrated in Figure 2, the horn spaces are constructed by gradually adding faces with increasing indices.\label{bonobo}} the horn spaces
\begin{gather*}
    \bigwedge^{n+1}_{\{i,m+1,\dots,n+1\}}(U):=\begin{cases}\bigwedge^{n+1}_{\{i,m,\dots,n+1\}}(U)\times_{d_{m-1}\times \dots \times d_{m-1},\bigwedge^n_{\{i,m,\dots,n\}}(U),p^n_{\{i,m,\dots,n\}}}U_n\:\:\:\:\:\:\:\:\:\:\:\:\:\:\:\:\:\:\:\:\:\:\text{if}\:\:\:\:i<m\\
    \bigwedge^{n+1}_{\{m-1,m,\dots,n+1\}}(U)\times_{d_{m-2}\times \dots \times d_{m-2},\bigwedge^n_{\{m-1,m,\dots,n\}}(U),p^n_{\{m-1,m,\dots,n\}}}U_n\:\:\:\:\:\text{if}\:\:\:\:i=m
\end{cases}
\end{gather*}
which are well-defined fiber products because the horn projections $p^n_{\{i,m,\dots,n\}}:U_n\rightarrow \bigwedge^n_{\{i,m,\dots,n\}}(U)$, $0\leq i\leq m-1$, are submersions by induction hypothesis. 
Using the fact that the horn projections $p^n_i:U_n\rightarrow \bigwedge^n_i(U),\:0\leq i\leq n$, are submersions, we now similarly define\footref{bonobo}
\begin{gather*}
    \bigwedge^{n+1}_i(U):=\begin{cases}\bigwedge^{n+1}_{\{i,n+1\}}(U)\times_{d_n\times \dots\times d_n,\bigwedge^n_i(U),p^n_i}U_n\:\:\:\:\:\:\:\:\:\,\:\:\:\:\:{\text{if}}\:\,\, i<n+1,\\
  \bigwedge^{n+1}_{\{n,n+1\}}\times_{d_{n-1}\times \dots\times d_{n-1},\bigwedge^n_n(U),p^n_n}U_n\:\:\:\:\:\:\:\:\:\:\:\:\:\:{\text{if}}\:\:\: i=n+1.
    \end{cases}
\end{gather*}
\begin{figure}
\centering
\begin{tikzpicture}[line join = round, line cap = round]

\coordinate [label=above:3] (3a) at (0,{sqrt(2)},0);
\coordinate [label=left:0] (0a) at ({-.5*sqrt(3)},0,-.5);
\coordinate [label=below:1] (1a) at (0,0,1);
\coordinate [label=right:2] (2a) at ({.5*sqrt(3)},0,-.5);

\begin{scope}[decoration={markings,mark=at position 0.5 with {\arrow{to}}}]

%\draw[pattern= north west lines, pattern color=black] (1d)--(0d)--(3d)--cycle;
%\draw[pattern=vertical lines, pattern color=black] (2d)--(1d)--(3d)--cycle;
%\draw[pattern=horizontal lines, pattern color=black] (1d)--(0d)--(2d)--cycle;
\draw[pattern=vertical lines, pattern color=black] (2a)--(0a)--(3a)--cycle;
%\draw[fill=white,fill opacity=.5] (2)--(1)--(0)--cycle;
%\draw[densely dotted,postaction={decorate}] (0)--(2);
\draw[postaction={decorate}] (0a)--(2a);
\draw[dotted, postaction={decorate}] (0a)--(1a);
\draw[dotted, postaction={decorate}] (1a)--(2a);
\draw[postaction={decorate}] (2a)--(3a);
\draw[dotted, postaction={decorate}] (1a)--(3a);
\draw[postaction={decorate}] (0a)--(3a);
\end{scope}

\node[] at (0.25,-1.5,0) {$\bigwedge^3_{\{0,2,3\}}(U)$};

\coordinate [label=above:3] (3b) at (4,{sqrt(2)},0);
\coordinate [label=left:0] (0b) at ({4-.5*sqrt(3)},0,-.5);
\coordinate [label=below:1] (1b) at (4,0,1);
\coordinate [label=right:2] (2b) at ({4+.5*sqrt(3)},0,-.5);

\begin{scope}[decoration={markings,mark=at position 0.5 with {\arrow{to}}}]

\draw[pattern= north west lines, pattern color=black] (1b)--(0b)--(3b)--cycle;
%\draw[pattern=vertical lines, pattern color=black] (2d)--(1d)--(3d)--cycle;
%\draw[pattern=horizontal lines, pattern color=black] (1d)--(0d)--(2d)--cycle;
\draw[pattern=vertical lines, pattern color=black] (2b)--(0b)--(3b)--cycle;
%\draw[fill=white,fill opacity=.5] (2)--(1)--(0)--cycle;
%\draw[densely dotted,postaction={decorate}] (0)--(2);
\draw[postaction={decorate}] (0b)--(2b);
\draw[postaction={decorate}] (0b)--(1b);
\draw[dotted, postaction={decorate}] (1b)--(2b);
\draw[postaction={decorate}] (2b)--(3b);
\draw[postaction={decorate}] (1b)--(3b);
\draw[postaction={decorate}] (0b)--(3b);
\end{scope}

\node[] at (4.25,-1.5,0) {$\bigwedge^3_{\{0,3\}}(U)$};

\coordinate [label=above:3] (3c) at (8,{sqrt(2)},0);
\coordinate [label=left:0] (0c) at ({8-.5*sqrt(3)},0,-.5);
\coordinate [label=below:1] (1c) at (8,0,1);
\coordinate [label=right:2] (2c) at ({8+.5*sqrt(3)},0,-.5);

\begin{scope}[decoration={markings,mark=at position 0.5 with {\arrow{to}}}]

\draw[pattern= north west lines, pattern color=black] (1c)--(0c)--(3c)--cycle;
%\draw[pattern=vertical lines, pattern color=black] (2d)--(1d)--(3d)--cycle;
\draw[pattern=horizontal lines, pattern color=black] (1c)--(0c)--(2c)--cycle;
\draw[pattern=vertical lines, pattern color=black] (2c)--(0c)--(3c)--cycle;
%\draw[fill=white,fill opacity=.5] (2c)--(1c)--(0c)--cycle;
%\draw[densely dotted,postaction={decorate}] (0)--(2);
\draw[postaction={decorate}] (0c)--(2c);
\draw[postaction={decorate}] (0c)--(1c);
\draw[postaction={decorate}] (1c)--(2c);
\draw[postaction={decorate}] (2c)--(3c);
\draw[postaction={decorate}] (1c)--(3c);
\draw[postaction={decorate}] (0c)--(3c);
\end{scope}

\node[] at (8.25,-1.5,0) {$\bigwedge^3_{0}(U)$};

\node[] at (2.2,0.5,0)   {$\leadsto$};
\node[] at (6.2,0.5,0)   {$\leadsto$};
\end{tikzpicture}
 \caption{The construction of $\bigwedge^3_{0}(U)$ }
\end{figure}
\noindent Set $\Tilde{U}_{n+1}:=\bigcap_{j=0}^{n+1}d_j^{-1}(U_n)$ which is an open neighborhood of $X_0\subseteq X_{n+1}$. Then one can define well-defined horn projections \begin{gather*}
    p^{n+1}_i:\tilde{U}_{n+1}\rightarrow \bigwedge^{n+1}_i(U),\:\:\: 0\leq i\leq n+1,\\
    p^{n+1}_{\{i,m+1,\dots,n+1\}}:\tilde{U}_{n+1}\rightarrow \bigwedge^{n+1}_{\{i,m+1,\dots,n+1\}}(U),\:\:\:0\leq i\leq m\leq n.
\end{gather*}
For $x_0\in X_0$ the tangent maps
\begin{gather*}
    {Tp^{n+1}_i}_{|x_0}:T_{x_0}\Tilde{U}_{n+1}=T_{x_0}X_{n+1}\rightarrow T_{(x_0,\dots,x_0)}\bigwedge^{n+1}_i(U)=\bigwedge^{n+1}_i(T_{x_0}X),\\
    {Tp^{n+1}_{\{i,m+1,\dots,n+1\}}}_{|x_0}:T_{x_0}\Tilde{U}_{n+1}=T_{x_0}X_{n+1}\rightarrow T_{(x_0,\dots,x_0)}\bigwedge^{n+1}_{\{i,m+1,\dots,n+1\}}(U)=\bigwedge^{n+1}_{\{i,m+1,\dots,n+1\}}(T_{x_0}X)
\end{gather*}
are surjective which follows from the fact that \s vector spaces are Kan and Lemma \ref{yone}.
So there exists an open neighborhood $U_{n+1}$ of $X_0\subseteq X_{n+1}$ such that $U_{n+1}$ is a subset of $\Tilde{U}_{n+1}$ and the restrictions ${p^{n+1}_i}_{|U_{n+1}}:U_{n+1}\rightarrow \bigwedge^{n+1}_i(U),\:0\leq i\leq n+1$, $ {p^{n+1}_{\{i,m+1,\dots,n+1\}}}_{|U_{n+1}}:U_{n+1}\rightarrow \bigwedge^{n+1}_{\{i,m+1,\dots,n+1\}}(U),\:0\leq i\leq m\leq n$, are submersions. 
Thus, i) and ii) hold for $n+1$ and the statement follows by induction.
\end{proof}

\begin{coro}
\label{kaisa}
Let $f_{\bu}:X_{\bu}\rightarrow Y_{\bu}$ be a \s morphism between \s mani\-folds. Then there exist Kan semi\s manifolds $U_{\bu},V_{\bu}$ induced by $X_{\bu},Y_{\bu}$ (and depending on $f_{\bu}$) such that $f_{\bu}$ can be restricted to a well-defined semi\s morphism $f_{\bu}:U_{\bu}\rightarrow V_{\bu}$.
\end{coro}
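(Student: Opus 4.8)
The plan is to deduce this as a relative version of Theorem~\ref{yummiiiiiiiiii}. First I would apply Theorem~\ref{yummiiiiiiiiii} to $Y_{\bu}$ to obtain a Kan semi\s submanifold $V_{\bu}$ of $Y_{\bu}$ consisting of open neighbourhoods $V_n$ of $Y_0\subseteq Y_n$. Then I would rerun the inductive construction from the proof of Theorem~\ref{yummiiiiiiiiii} for $X_{\bu}$, but impose at each stage the additional constraint $U_n\subseteq f_n^{-1}(V_n)$. Since $f_{\bu}$ intertwines faces and degeneracies, this constraint propagates through the induction and forces the restricted maps $f_n\colon U_n\to V_n$ to assemble into a semi\s morphism.

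Concretely, I would prove by induction on $n$ that there are open neighbourhoods $U_0,\dots,U_n$ of $X_0$ in $X_0,\dots,X_n$ with: (a) the faces of $X_{\bu}$ restricting to well-defined maps among them; (b) conditions i) and ii) of Theorem~\ref{yummiiiiiiiiii} holding up to level $n$; and (c) $f_k(U_k)\subseteq V_k$ for $k\le n$. The base step ($U_0:=X_0$, and level $1$ as in the theorem, intersecting in $f_1^{-1}(V_1)$) is immediate since $f_0(X_0)\subseteq Y_0=V_0$. For the inductive step I would first form the horn spaces $\bigwedge^{n+1}_i(U)$ and $\bigwedge^{n+1}_{\{i,m+1,\dots,n+1\}}(U)$ as the iterated fibre products of Theorem~\ref{yummiiiiiiiiii} — legitimate because the horn projections of $U_{\le n}$ are submersions by (b) — and set
\[
\widehat U_{n+1}:=\Bigl(\bigcap_{j=0}^{n+1}d_j^{-1}(U_n)\Bigr)\cap f_{n+1}^{-1}(V_{n+1}).
\]
This is open and contains $X_0$: identifying $x_0\in X_0$ with $(s_0)^{n+1}x_0$, each face $d_j\bigl((s_0)^{n+1}x_0\bigr)$ equals $(s_0)^{n}x_0\in X_0\subseteq U_n$ by the \s identities, while $f_{n+1}\bigl((s_0)^{n+1}x_0\bigr)=(s_0)^{n+1}f_0(x_0)\in Y_0\subseteq V_{n+1}$ because $f_{\bu}$ is a \s morphism.

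On $\widehat U_{n+1}$ the horn projections $p^{n+1}_i$ and $p^{n+1}_{\{i,m+1,\dots,n+1\}}$ are well-defined (their components are faces landing in $U_n$), and their differentials at points $x_0\in X_0$ are exactly the generalized horn projections of the Kan \s vector space $T_{x_0}X_{\bu}$, hence surjective by Lemma~\ref{yone} and the fact that \s vector spaces are Kan — this is the same computation as in Theorem~\ref{yummiiiiiiiiii}. Thus I can shrink $\widehat U_{n+1}$ to an open neighbourhood $U_{n+1}$ of $X_0$ on which all these horn projections are submersions; since $U_{n+1}\subseteq\widehat U_{n+1}$, conditions (a) and (c) are inherited as well, which completes the induction. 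The resulting $U_{\bu}$ is a Kan semi\s submanifold of $X_{\bu}$ with $f_n(U_n)\subseteq V_n$ for all $n$; as $f_{\bu}$ commutes with all faces, the $f_n$ restrict to a well-defined semi\s morphism $f_{\bu}\colon U_{\bu}\to V_{\bu}$.

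I do not anticipate a genuinely new obstacle here: everything hard was already done in Theorem~\ref{yummiiiiiiiiii}, and the only addition is the open set $f_{n+1}^{-1}(V_{n+1})$. The two things to check are that it still contains the base space — which uses exactly that $f_{\bu}$ respects degeneracies — and that intersecting it into $\widehat U_{n+1}$ cannot spoil the submersion step, which is clear because that step only uses the values of the relevant differentials at points of $X_0$, and these lie in every neighbourhood of $X_0$. If one instead wanted $V_{\bu}$ to depend genuinely on $f_{\bu}$, as the statement permits, one could run a single simultaneous induction shrinking both $U_{\bu}$ and $V_{\bu}$ at each level; the argument is unchanged.
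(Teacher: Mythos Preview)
Your proposal is correct and follows essentially the same approach as the paper: construct $V_{\bu}$ via Theorem~\ref{yummiiiiiiiiii}, then rerun the induction for $X_{\bu}$ while shrinking each $U_n$ to lie inside $f_n^{-1}(V_n)$. The paper's own proof says exactly this in two sentences; you have simply spelled out the verifications (that $X_0$ still lies in the shrunken neighbourhood and that the submersion argument is unaffected) which the paper leaves implicit.
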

\begin{proof}
Construct $V_{\bu}$ like in the proof of Theorem\,\ref{yummiiiiiiiiii} and set $U_0:=X_0$. When recursively constructing the sets $U_n$ for $n\geq 1$ shrink $U_n$ if necessary, so it is a subset of $f_n^{-1}(V_n)$.
\end{proof}

\noindent Motivated by constructions made in the proof of Theorem \ref{yummiiiiiiiiii}, we introduce the following definition.

 \begin{defi}
    A \textbf{Kan semi\s manifold} $X_{\bu}$ is a semi\s manifold which satisfies the following generalized Kan conditions for $n\in \mathbb{N}$:
    \begin{gather*}
      i)\:\:p^n_i:X_n\rightarrow \bigwedge^n_i(X)\:\:\:\text{is a submersion for}\:\: 0\leq i\leq n,\\  
      ii)\:\:p^n_{\{i,m+1,\dots,n\}}:X_n\rightarrow \bigwedge^n_{\{i,m+1,\dots,n\}}(X)\:\:\:\text{is a submersion for}\:\: 0\leq i\leq m,\:1\leq m\leq n-1.
    \end{gather*}
    \end{defi}

\begin{rema}
  Unlike for Kan simplicial manifolds, the requirement of the “usual” Kan conditions (i) does not guarantee the existence of the horn spaces $\bigwedge^n_i(X)$ of a semi\s manifold $X_{\bu}$. If one additionally assumes that the generalized Kan conditions (ii) hold (which turn out to be automatically satisfied in the case of Kan simplicial manifolds (see \cite[Lem.\,2.4]{henri})), the aforementioned horn spaces $\bigwedge^n_i(X),\:\bigwedge^n_{\{i,m+1,\dots,n\}}(X)$ can be defined recursively as in the previous theorem.  
\end{rema}
\noindent We further introduce several notions which will prove to be useful for our purposes.
\begin{itemize}
    \item A \textbf{semi\s manifold} $U_{\bu}$ is \textbf{induced} by a \s manifold $X_{\bu}$ if its face maps are restrictions of the face maps of $X_{\bu}$ and for each $n\in \mathbb{N},$ $U_n$ is an open neighborhood of $X_0\subseteq X_n$.
    \item Similarly, an \textbf{$\mathbf{n}$-\s manifold} $V_{\bu}$ is \textbf{induced} by a \s manifold $X_{\bu}$ if its structure maps are restrictions of the structure maps of $X_{\bu}$ and $V_m$ is an open neighborhood of $X_0\subseteq X_m$ for $m\in \{0,\dots,n\}$.
    \item A \textbf{Kan $\mathbf{n}$-\s manifold} is an $n$-\s manifold satisfying Kan condition $\text{Kan}(m,i)$ in $(\catname{Mfd},\mathscr{T}_{\text{subm.}})$ for $1\leq m\leq n,\:0\leq i\leq m$.
\end{itemize}
    \noindent Let $X_{\bu},Y_{\bu}$ be $n$-\s manifolds and $Y_{\bu}$ be a Kan $n$-\s manifold. If $X_m$ is an open submanifold of $Y_m$ for $m\in \{0,\dots,n\}$ and the structure maps of $X_{\bu}$ are restrictions of the structure maps of $Y_{\bu}$, then $X_{\bu}$ is a Kan $n$-\s manifold, as well.\\
\noindent It turns out that every induced $n$-semisimplicial manifold can be shrunk to an induced  $n$-simplicial manifold.
\begin{lemm}
\label{blado}
    Let $X_{\bu}$ be a \s manifold and $U_{\bu}$ be an $n$-semi\s manifold  induced by $X_{\bu}$. Then there exists an $n$-\s manifold $U^{\prime}_{\bu}$ induced by $X_{\bu}$ such that $U_m^{\prime}\subseteq U_m$ for $m\in \{0,\dots,n\}$.
\end{lemm}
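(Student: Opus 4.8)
The plan is to avoid any iterative shrinking procedure --- which is awkward here because faces propagate constraints downwards in degree while degeneracies propagate them upwards --- and instead to write $U^{\prime}_{\bu}$ down in one stroke, exploiting that $\Delta_{\leq n}$ has only finitely many morphisms. Concretely, for $m\in\{0,\dots,n\}$ I would set
\[
U^{\prime}_m:=\bigcap_{\theta\colon[k]\to[m]} X(\theta)^{-1}(U_k)\ \subseteq\ X_m ,
\]
where $\theta$ ranges over the (finitely many) order-preserving maps with target $[m]$ and source $[k]$ for $k\leq n$; equivalently, a point of $X_m$ lies in $U^{\prime}_m$ precisely when every iterated face or degeneracy of it that lands in a level $\leq n$ lies in the corresponding $U_k$. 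Note that this uses only that each $U_k$ is an open neighbourhood of $X_0$ in $X_k$, not that $U_{\bu}$ is semisimplicial.

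First I would check that the $U^{\prime}_m$ are admissible. Each $X(\theta)^{-1}(U_k)$ is open in $X_m$, being the preimage of an open set under a smooth map, and the intersection is finite, so $U^{\prime}_m$ is open; taking $\theta=\mathrm{id}_{[m]}$ gives $U^{\prime}_m\subseteq U_m$. Moreover $X_0\subseteq U^{\prime}_m$: recall that $X_0$ sits inside $X_m$ as the image of $X(\tau_m)$, where $\tau_m\colon[m]\to[0]$ is the unique morphism, and for any $\theta\colon[k]\to[m]$ one has $\tau_m\circ\theta=\tau_k$ by uniqueness, so $X(\theta)$ carries $X(\tau_m)(X_0)$ onto $X(\tau_k)(X_0)=X_0\subseteq X_k\subseteq U_k$. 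Hence each $U^{\prime}_m$ is an open neighbourhood of $X_0$ in $X_m$ contained in $U_m$.

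Next I would verify that $U^{\prime}_{\bu}$ is closed under every structure map of $X_{\bu}$. If $\phi\colon[j]\to[m]$ is a morphism of $\Delta_{\leq n}$ and $x\in U^{\prime}_m$, then for every $\psi\colon[k]\to[j]$ one has $X(\psi)\bigl(X(\phi)(x)\bigr)=X(\phi\circ\psi)(x)\in U_k$, since $\phi\circ\psi\colon[k]\to[m]$ is among the maps defining $U^{\prime}_m$; therefore $X(\phi)(x)\in U^{\prime}_j$. Applying this to cofaces and codegeneracies shows that the faces $d_i$ and degeneracies $s_i$ of $X_{\bu}$ restrict to smooth maps among the $U^{\prime}_m$, and as these are literal restrictions of the structure maps of $X_{\bu}$ the simplicial identities are inherited. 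Thus $(U^{\prime}_m)_{0\leq m\leq n}$ together with the restricted structure maps is an $n$-simplicial manifold induced by $X_{\bu}$ with $U^{\prime}_m\subseteq U_m$, as claimed.

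I do not anticipate a serious obstacle; the only points requiring a moment of care are recognizing that a single intersection over all of $\mathrm{Mor}(\Delta_{\leq n})$ --- rather than a stepwise shrinking --- dissolves the up/down tension between degeneracies and faces, and verifying that the embedded base space $X_0$ is stable under every structure map, which is exactly the unique embeddability of $X_0$ into the $X_m$ recorded at the start of the proof of Theorem \ref{yummiiiiiiiiii}.
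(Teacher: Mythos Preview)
Your proof is correct and takes a genuinely different, more streamlined route than the paper. The paper proceeds recursively: it defines $U'_m$ by intersecting $U_m$ with preimages of the previously constructed $U'_{m-1}$ under all single face maps and with preimages of the higher $U_k$ under all iterated degeneracies, and then verifies closure under degeneracies by an inductive case analysis that invokes the simplicial identities to rewrite expressions of the form $s_I d_j s_i$ and $d_l d_j s_i$ into canonical shape. Your construction sidesteps this entirely: by intersecting once over \emph{all} morphisms of $\Delta_{\leq n}$ with target $[m]$, closure under any structure map $X(\phi)$ reduces to the trivial observation that precomposition $\psi \mapsto \phi\circ\psi$ sends morphisms with target $[j]$ into morphisms with target $[m]$ --- no simplicial identities are needed. (In fact, unrolling the paper's recursion via the epi--mono factorisation in $\Delta$ shows the two constructions produce the same sets $U'_m$, since the extra constraints in your definition coming from morphisms $\theta$ factoring through $[0]$ are automatically satisfied.) What the paper's approach buys is a slightly more explicit bookkeeping of which constraints are genuinely non-trivial; what yours buys is a three-paragraph proof in which the finiteness of $\Delta_{\leq n}$ does all the work.
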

\begin{proof}
For a multi-index $I=(i_1,\dots,i_l)$, let $s_I:=s_{i_1}\circ \dots \circ s_{i_l}$ denote the corresponding $l$-fold composition of degeneracy maps. Let $|I|$ denote the length of a multi-index $I$.\\
Given a \s manifold $X_{\bu}$ with an induced $n$-semi\s manifold $U_{\bu}$, define the following subsets of the $U_m,\:m\in \{0,\dots,n\}$. Set
\begin{gather*}
    U^{\prime}_0:=U_0=X_0,\:U^{\pr}_1:=U_1\cap\bigcap_{I, 1\leq |I|\leq n-1} s_I^{-1}(U_{1+|I|})
\end{gather*}
%\begin{gather}
%\bigcap_{I, 1 \leq |I| \leq n} s_I^{-1}(U_{1+|I|})
%\end{gather}
and for $m\in \{2,\dots,n-1\}$ define $U^{\pr}_m$ recursively as
\begin{gather*}
    U^{\pr}_m:=U_m\cap \bigcap_{j=0}^m d_j^{-1}(U^{\pr}_{m-1})\cap \bigcap_{I, 1\leq |I|\leq n-m} s_I^{-1}(U_{m+|I|}).
\end{gather*}
For $l\in \{1,\dots,n-1\}$ the intersection $\bigcap_{I,1\leq |I|\leq n-l} s_I^{-1}(U_{l+|I|})$ is defined over the set of all multi-indices of length $1\leq |I|\leq n-l$ such that the iterated degeneracy map $s_I:X_l\rightarrow X_{l+|I|}$ is defined.
For $m=n$ define $U^{\pr}_m$ as
\begin{gather*}
    U^{\pr}_m:=U_m\cap \bigcap_{j=0}^m d_j^{-1}(U^{\pr}_{m-1}).
\end{gather*}
Note that the $U^{\pr}_m$’s are open subsets of the $U_m$’s and contain $X_0$ as it is preserved by the face and degeneracy maps. By construction, the face maps can be restricted to well-defined maps between the $U^{\pr}_m$’s. It remains to be shown that the same is true for the degeneracy maps. We show this inductively. Obviously, $s^0_0$ maps from $U^{\pr}_0$ to $U^{\pr}_1$. Suppose that the degeneracy maps $s^r_{\bu}$ are well-defined for $r\leq m-1$.\\
Now consider a degeneracy map $s^m_i,\:i\in\{0,\dots,m\}$. By construction, $s_{(i,J)}$ maps $U^{\pr}_m$ to $U_{m+1+|J|}$ for every suitable multi-index $J$ of length $1\leq |J|\leq n-m-1$, so $s_i^m$ maps $U^{\pr}_m$ to $U_{m+1}\cap \bigcap_{J, 1\leq |J|\leq n-m-1} s_J^{-1}(U_{m+1+|J|})$. Thus it is sufficient to show that $s_i^m$ maps $U_m^{\prime}$ to $U_{m+1}\cap \bigcap_{j=0}^{m+1} d_j^{-1}(U^{\pr}_{m})$, i.e. that $d_js_i$ maps from $U^{\pr}_m$ to $U^{\pr}_m$ for $j\in \{0,\dots,m+1\}$. For an element $x_m\in U^{\pr}_m$, one has $d_js_i(x_m)\in U_m$, so due to the way $U^{\prime}_m$ is defined one needs to check that 
\begin{gather*}
    s_Id_js_i(x_m)\in U_{m+|I|}\:\:\:\:\text{for every multi-index $I$ with}\:\:|I|\in \{1,\dots, n-m\}\:\:\,\text{and that}\\
    d_ld_js_i(x_m)\in U^{\pr}_{m-1}\:\:\:\:\text{for}\:\:\l\in \{0,\dots,m\}.
\end{gather*}

\noindent Now, using \s identities, one has the following cases:
\begin{gather*}
   s_Id_js_i(x_m)=\begin{cases} s_{\tilde{I}}(x_m)\:\:\:\:\:\:\text{for a multi-index $\Tilde{I}$ with $|\Tilde{I}|=|I|$}\:\:\:\:\:\:\:\:\:\:\:\:\:\:\:\:\:\:\:(1)
   \\
   s_{\tilde{I}}d_{\tilde{j}}(x_m)\:\:\text{for a multi-index $\Tilde{I}$ with $|\Tilde{I}|=|I|+1$}\\
   \text{and an index $\tilde{j}\in \{0,\dots,m\}$}\:\:\:\:\:\:\:\:\:\:\:\:\:\:\:\:\:\:\:\:\:\:\:\:\:\:\:\:\:\:\:\:\:\:\:\:\:\:\:\:\:\:\:\:\:\:\:\:\,(2)
   \end{cases}
\end{gather*}

   \begin{gather*}
       d_ld_js_i(x_m)=\begin{cases}
       s_{\tilde{i}}d_{\tilde{l}}d_{\tilde{j}}(x_m)\:\:\:\:\text{for some indices $\tilde{i},\tilde{l},\tilde{j}$}\:\:\:\:\:\:\:\:\:\:\:\:\:\:\:\:\:\:\:(a)
       \\
       d_{\tilde{j}}(x_m)\:\:\:\:\text{for some index $\tilde{j}\in \{0,\dots,m\}$}\:\:\:\:\:\:\:\:\:\:\:(b)
       \end{cases}.
   \end{gather*}
   Regarding each case, it follows\\
   1)\:that $s_{\tilde{I}}(x_m)\in U_{m+|I|}$,\\
   2)\:that $d_{\tilde{j}}(x_m)\in U^{\pr}_{m-1}$ and consequently $s_{\tilde{I}}d_{\tilde{j}}(x_m)\in U_{m-1+|I|+1}=U_{m+|I|}$,\\
   a)\:that $d_{\tilde{l}}d_{\tilde{j}}(x_m)\in U^{\pr}_{m-2}$ and by induction hypothesis $s_{\tilde{i}}d_{\tilde{l}}d_{\tilde{j}}(x_m)\in U^{\pr}_{m-1}$,\\
   b)\:that $d_{\tilde{j}}(x_m)\in U^{\pr}_{m-1}$.
   \\
   \\
Thus, $d_js_i(x_m)\in U^{\pr}_m$ and $s_i^m:U^{\pr}_m\rightarrow U^{\pr}_{m+1}$ is well-defined.
\end{proof}
\noindent Combining Theorem \ref{yummiiiiiiiiii} and Lemma \ref{blado}, we conclude:
\begin{coro}
\label{blubiblupp}
 Let $X_{\bu}$ be a simplicial manifold. For $n\in \mathbb{N}$ there exists a Kan $n$-simplicial manifold induced by $X_{\bu}$.
\end{coro}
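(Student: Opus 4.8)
The plan is to combine the two preceding results and then check that the Kan property survives the shrinking procedure of Lemma~\ref{blado}. Fix $n\in \mathbb{N}$. First I would invoke Theorem~\ref{yummiiiiiiiiii} to produce open neighborhoods $U_m$ of $X_0\subseteq X_m$, $m\in \mathbb{N}_0$, such that the face maps of $X_{\bu}$ restrict to well-defined maps among the $U_m$ and the resulting semi\s manifold $U_{\bu}$ satisfies conditions i) and ii) of that theorem. Discarding the levels above $n$ yields an $n$-semi\s manifold induced by $X_{\bu}$ for which the horn spaces $\bigwedge^m_i(U)$, $1\leq m\leq n$, $0\leq i\leq m$, exist as embedded submanifolds of products of the $U_l$ and the horn projections $p^m_i\colon U_m\rightarrow \bigwedge^m_i(U)$ are submersions; that is, $\text{Kan}(m,i)$ holds in $(\catname{Mfd},\mathscr{T}_{\text{subm.}})$ for all these $m,i$.

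Next I would apply Lemma~\ref{blado} to this $n$-semi\s manifold, obtaining an $n$-\s manifold $U^{\pr}_{\bu}$ induced by $X_{\bu}$ with $U^{\pr}_m$ an open subset of $U_m$ for each $m\in \{0,\dots,n\}$. It then remains to verify that $U^{\pr}_{\bu}$ is a Kan $n$-\s manifold, i.e.\ that it still satisfies $\text{Kan}(m,i)$ for $1\leq m\leq n$, $0\leq i\leq m$. For this I would run the same recursion that defined the horn spaces in Theorem~\ref{yummiiiiiiiiii}: since each $U^{\pr}_l$ is open in $U_l$, the face maps of $U^{\pr}_{\bu}$ are restrictions of those of $U_{\bu}$, and all fiber products occurring in that recursion are taken along submersions (hence transverse), the horn space $\bigwedge^m_i(U^{\pr})$ is canonically an open submanifold of $\bigwedge^m_i(U)$, and $p^m_i\colon U^{\pr}_m\rightarrow \bigwedge^m_i(U^{\pr})$ is obtained from the submersion $p^m_i\colon U_m\rightarrow \bigwedge^m_i(U)$ by restricting domain and codomain to open subsets, hence is again a submersion. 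Alternatively, one may directly cite the observation recorded just before Lemma~\ref{blado}, namely that an $n$-\s manifold whose levels are open submanifolds of those of a Kan $n$-\s manifold, with structure maps obtained by restriction, is again Kan. Either way, $U^{\pr}_{\bu}$ is the desired Kan $n$-\s manifold induced by $X_{\bu}$.

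The only point that is not purely formal is this last verification, i.e.\ checking that passing from $U_{\bu}$ to the shrunken $U^{\pr}_{\bu}$ of Lemma~\ref{blado} neither destroys the existence of the horn spaces nor the submersivity of the horn projections. It is not a real obstacle precisely because openness is preserved under the fiber-product construction over submersions and because a submersion remains a submersion after restriction to open subsets of its domain and codomain; so no new Kan-theoretic input beyond Theorem~\ref{yummiiiiiiiiii} is needed.
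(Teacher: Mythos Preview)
Your proposal is correct and follows exactly the route the paper takes: the paper's proof is literally the one-line ``Combining Theorem~\ref{yummiiiiiiiiii} and Lemma~\ref{blado}, we conclude'', and you have simply unpacked this, including the verification (via the observation preceding Lemma~\ref{blado}, or equivalently the open-restriction-of-submersions argument) that the Kan property survives the shrinking. The only nitpick is that the observation before Lemma~\ref{blado} is phrased for a Kan $n$-\emph{simplicial} manifold $Y_{\bu}$, whereas your truncated $U_{\bu}$ is only $n$-semi\s; since Kan conditions involve only face maps this is harmless, and in any case your direct recursion argument already covers it.
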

\noindent The preceding corollary can be elegantly reformulated with the introduction of the concept of germs of \s manifolds. We first give the following definition.\footnote{For a short overview of the concept of germ space see also \cite{hu}.}

\begin{defi}
   For a fixed smooth manifold $S$, we define the \textbf{category of $S$-embeddings}, denoted by $\catname{Mfd_S}$, as follows. Objects are pairs $(M,i)$ where $M$ is a smooth manifold and $i$ denotes an embedding $i:S\hookrightarrow M$. Morphisms $f:(M,i)\rightarrow(N,j)$ are $S$-preserving smooth maps, meaning that the composition \mbox{$f\circ i$ equals $j$}.\\
   The \textbf{category $\mathbf{\mathscr{G}_S}$ of germs at $\mathbf{S}$} is defined as the localization of $\catname{Mfd_S}$ at the class of those morphisms that can be restricted to diffeomorphisms on open neighborhoods of $S$. Explicitly, morphisms $[f]:(M,i)\rightarrow (N,j)$ are defined as equivalence classes of $S$-preserving maps $U\rightarrow N$ defined on open neighborhoods $U$ of $S$ in $M$ where two such morphism are equivalent if they agree on an open neighborhood of $S$ contained in the intersection of their domains.
\end{defi}
\noindent Given a \s manifold $X_{\bu}$, its base space $X_0$ is uniquely embeddable into the higher $X_n$’s and preserved by the face and degeneracy maps. Thus, a \s manifold $X_{\bu}$ gives rise to the \s object 
\begin{gather*}
    X_0\substack{
[s_0]\\
\longrightarrow\\
\longleftarrow\\
[d_0],[d_1]}X_1    \substack{
[s_0],[s_1]\\
\longrightarrow\\
\longleftarrow\\
[d_0],[d_1],[d_2]}X_2      \substack{
\longrightarrow\\
\longleftarrow}\cdots
\end{gather*}
in $\mathbf{\mathscr{G}}_{X_0}$ which will be denoted by $g(X)$.

%\begin{defi}
%   Two \s manifolds $X_{\bu},Y_{\bu}$ have the same germ if their base spaces $X_0,Y_0$ coincide %and there exists a \s isomorphism $\alpha:g(X)\cong \alpha(Y)$ in $\mathbf{\mathscr{G}}_{X_0}$  %such that for every $n\in \mathbb{N}_0$ the morphism $\alpha_n$ is represented by the identity %$id_{U_n}$ on a mutual open neighborhood $U_n$ of $X_0=Y_0$ in $X_n,Y_n$.
%\end{defi}
%\noindent Two \s manifolds $X_{\bu},Y_{\bu}$ prove to have the same germ if and only if they have %the same base space and for every $n\in \mathbb{N}$ there exists an $n$-\s manifold which is %induced by both $X_{\bu}$ and $Y_{\bu}$. This fact is proven in Lemma \ref{germeq} in the %appendix.\par
\noindent By declaring morphisms which are represented by submersions defined on open neighborhoods of $X_0$ as covers, one obtains a Grothendieck singleton pretopology on $\mathbf{\mathscr{G}}_{X_0}$, and the local structure of a \s manifold $X_{\bu}$ can be described in the following way.
\begin{thm}
  Let $X_{\bu}$ be a \s manifold. Then $g(X)$ is a Kan \s object in $\mathbf{\mathscr{G}}_{X_0}$.
\end{thm}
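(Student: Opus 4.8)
The plan is to deduce this theorem directly from Theorem \ref{yummiiiiiiiiii} together with Corollary \ref{blubiblupp}, interpreting the statements ``up to shrinking the open neighborhoods $U_n$'' as exactly the assertion that the corresponding horn projections become covers \emph{in the localized category} $\mathbf{\mathscr{G}}_{X_0}$. First I would recall what must be verified: $g(X)$ is the simplicial object $X_0 \leftleftarrows X_1 \cdots$ in $\mathbf{\mathscr{G}}_{X_0}$ whose structure maps are the germ classes $[d_i],[s_i]$, and being a Kan simplicial object (i.e.\ an internal $\infty$-groupoid) means the horn projections $p^n_i : g(X)_n \to \bigwedge^n_i(g(X))$ exist and are covers for all $n \geq 1$, $0 \leq i \leq n$, and are moreover isomorphisms for $n$ large. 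So the proof has two parts: (1) the horn spaces $\bigwedge^n_i(g(X))$ exist as objects of $\mathbf{\mathscr{G}}_{X_0}$ and are computed by the germ of the manifold-level horn spaces $\bigwedge^n_i(U)$ built in Theorem \ref{yummiiiiiiiiii}; and (2) the germ of the Kan projection $p^n_i$ is a cover, i.e.\ represented by a submersion on a neighborhood of $X_0$ --- which is precisely condition i) of Theorem \ref{yummiiiiiiiiii}.

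For part (1) I would argue that the weighted limit $\bigwedge^n_i(g(X)) = \lim_{\bigwedge^n_i} g(X)$ in $\mathbf{\mathscr{G}}_{X_0}$ is represented by $\bigwedge^n_i(U)$: the recursive fiber-product construction in the proof of Theorem \ref{yummiiiiiiiiii} uses only pullbacks along submersions, and since the localization functor $\catname{Mfd}_{X_0} \to \mathbf{\mathscr{G}}_{X_0}$ sends such pullbacks of submersions to the corresponding limits in $\mathbf{\mathscr{G}}_{X_0}$ (because, locally around $X_0$, a pullback of a submersion is again a manifold and the localization is by local diffeomorphisms near $X_0$), the germ of $\bigwedge^n_i(U)$ is a limit cone. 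I would also note the $X_0$-embedding structure: each $U_n$ contains $X_0$ and each $\bigwedge^n_i(U)$ contains the diagonal copy $(X_0,\dots,X_0)$, so everything lives in $\mathbf{\mathscr{G}}_{X_0}$. Then part (2) is immediate: Theorem \ref{yummiiiiiiiiii}(i) says $p^n_i : U_n \to \bigwedge^n_i(U)$ is a submersion defined on a neighborhood of $X_0$, hence its germ is a cover in the chosen pretopology on $\mathbf{\mathscr{G}}_{X_0}$. Finally, the strong Kan conditions $\text{Kan}!(m,i)$ for $m \geq n+1$ that are part of the definition of internal $\infty$-groupoid require the horn projections to be \emph{isomorphisms} of germs for large $m$; but in fact the definition quantifies $\text{Kan}(m,i)$ for all $m \leq \infty$, so only the cover condition is needed, and it holds for every $m$ by Theorem \ref{yummiiiiiiiiii}(i). (If one wants the full internal-$\infty$-groupoid packaging including $\text{Kan}!$, one invokes that these are consequences for germs once all Kan conditions hold, or simply observes the definition with $n=\infty$ imposes no $\text{Kan}!$ condition.)

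The main obstacle I anticipate is part (1): justifying that the localization $\catname{Mfd}_{X_0} \to \mathbf{\mathscr{G}}_{X_0}$ commutes with the relevant finite weighted limits, i.e.\ that $\bigwedge^n_i(g(X))$ really is the germ of $\bigwedge^n_i(U)$ rather than some a priori different limit. The subtlety is that localization functors do not generally preserve limits; here it works only because the limits in question are iterated pullbacks along \emph{covers} (submersions near $X_0$), and one must check that such a pullback diagram, after localization, still has the universal property against $\mathbf{\mathscr{G}}_{X_0}$-cones --- which amounts to the statement that two germ-maps into $U_{n-1}^{\times k}$ agreeing with the horn compatibility conditions factor uniquely through the germ of $\bigwedge^n_i(U)$, and this follows from the explicit fibered-product description since every such map is represented on a small enough neighborhood by an actual smooth map. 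I would spell this out inductively in $n$, mirroring the recursion in Theorem \ref{yummiiiiiiiiii}, so that at each stage the new horn space is a genuine pullback along a submersion and its germ inherits the universal property. Once this bookkeeping is in place, the theorem is essentially a translation of Theorem \ref{yummiiiiiiiiii} and Corollary \ref{blubiblupp} into the language of $\mathbf{\mathscr{G}}_{X_0}$.
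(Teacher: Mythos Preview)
Your proposal is correct and follows essentially the same approach as the paper: both arguments invoke the induced Kan (semi-)simplicial neighborhoods from Theorem~\ref{yummiiiiiiiiii}/Corollary~\ref{blubiblupp}, identify the horn spaces $\bigwedge^n_i(g(X))$ in $\mathscr{G}_{X_0}$ with the germs of the manifold-level horn spaces $\bigwedge^n_i(U)$, and then read off that the horn projections are covers. The paper compresses your part~(1) into a single ``it can be easily verified'', whereas you correctly flag the localization-preserves-these-pullbacks issue and sketch its inductive resolution; your remark that for $n=\infty$ no $\text{Kan}!$ conditions are imposed is also correct and matches the paper's implicit reading.
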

\begin{proof}
By Corollary \ref{blubiblupp}, for $n\in \mathbb{N}$ there exists a Kan $n$-\s manifold $U^{(n)}_{\bu}$ induced by $X_{\bu}$. It now can be easily verified that the horn spaces $\bigwedge^m_i(U^{(n)}),\:0\leq i\leq m,\:1\leq m\leq n$, are also the $(m,i)$-horn spaces of $g(X)$ in $\mathbf{\mathscr{G}}_{X_0}$ and that the corresponding horn projections of $g(X)$ are represented by the horn projections $\prescript{(n)}{}{p^m_i}:U^{(n)}_m\rightarrow \bigwedge^m_i(U^{(n)})$ of $U^{(n)}_{\bu}$. Hence, $g(X)$ satisfies the Kan conditions $\text{Kan}(m,i)$ for $0\leq i\leq m,\:1\leq m\leq n$. Since $n\in \mathbb{N}$ was arbitrary, $g(X)$ is a Kan \s object in $\mathbf{\mathscr{G}}_{X_0}$.
\end{proof}

\subsection{The tangent complex}
\noindent Let us quickly recall that the normalized chain complex functor $N:\catname{sVect}\rightarrow \catname{Ch_{\geq 0}(Vect)}$ is the functor sending a \s vector space $V_{\bu}$ to the connected chain complex of vector spaces $(N(V)_{\bu},\partial_{\bu})$ with terms 
\begin{gather*}
    N(V)_n=\begin{cases}V_0\:\:\:\:\:\:\:\:\:\:\:\:\:\:\:\:\:\:\:\:\:\:\:\:\:\:\:\:\:\:\:\:\:\:\:\:\:\:\:\:\:\:\:\:\:\:\:\:\:\:\:\:\:\:\:\:\:\:\:\:\:\:\:\:\:\:\:\:\:\:\:\:n=0\\
    \bigcap_{j=0}^{n-1}\text{ker}\,(d_j:V_n\rightarrow V_{n-1})\:\:\:\:\:\:\:\:\:\:\:\:\:\:\:\:\:\:\:\:\:\:\:\:\:n\geq 1
    \end{cases}
\end{gather*}
and differentials given by $\partial_n=(-1)^nd_n^n,\:n\in \mathbb{N}$.\\
The \textbf{tangent complex} of a \s manifold $X_{\bu}$ is obtained when applying the normalization functor to the \s vector bundle $TX_{\bu}$, i.e. it is the chain complex of vector bundles $(\mathfrak{T}_{\bu}(X),\partial_{\bu})$ whose objects are defined as 
    \begin{gather*}
        \mathfrak{T}_n(X):=\bigsqcup_{x_0\in X_0}N_n(T_{x_0}X_{\bu})=
        \begin{cases}
        TX_0,\:\:\:\:\:\:\:\:\:\:\:\:\:\:\:\:\:\:\:\:\:\:\:\:\:\:\:\:\:\:\:\:\:\:\:\:\:\:\:\:\:\:\:\:\:\:\:\:\:\:\:\:\:\:\:\:\:\:\:\:\:\:\:\:\:\:\:n=0\\
        \bigcap_{j=0}^{n-1}\text{ker}\,(Td_j:{TX_n}_{|X_0}\rightarrow {TX_{n-1}}_{|X_0}),\:n\geq 1
        \end{cases}
    \end{gather*}
and whose differentials are given by $\partial_n=(-1)^nTd_n^n,\:n\in \mathbb{N}$.\\
Note that it is not obvious why the sets $\mathfrak{T}_n(X)$ possess a smooth structure making them vector subbundles of ${TX_n}_{|X_0}$. In the case of local Lie $\infty$-groupoids, $\mathfrak{T}_n(X)=\text{ker}\,(Tp^n_n:TX_n\rightarrow T\bigwedge^n_n(X))_{|X_0}$ is a well-defined subbundle of ${TX_n}_{|X_0}$ because $p^n_n$ is a submersion. Since the tangent complex “only sees the local structure near the base space" and \s manifolds satisfy local Kan conditions, it still proves to be well-defined in the general setting, and further can be realized as a functor.
\begin{prop}
Assigning the tangent complex is a functor $\mathfrak{T}:\catname{sMfd}\rightarrow \catname{VBChaincompl.}$
\end{prop}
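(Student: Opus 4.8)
The plan is to establish the two halves of a functor separately: first that $\mathfrak{T}(X)$ is a well-defined chain complex of vector bundles for every simplicial manifold $X_{\bu}$, and then that a simplicial morphism $f_{\bu}:X_{\bu}\to Y_{\bu}$ induces a chain map $\mathfrak{T}(f):\mathfrak{T}(X)\to \mathfrak{T}(Y)$ compatibly with composition and identities. The crucial input is the local structure theory developed above: by Corollary \ref{blubiblupp}, for each $n$ there is a Kan $n$-simplicial manifold $U^{(n)}_{\bu}$ induced by $X_{\bu}$, and on such a Kan object the horn projection $p^n_n:U^{(n)}_n\to \bigwedge^n_n(U^{(n)})$ is a submersion. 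Since $U^{(n)}_n$ is an open neighborhood of $X_0$ in $X_n$ and the face maps agree with those of $X_{\bu}$, we have $\mathfrak{T}_n(X)=\bigcap_{j=0}^{n-1}\ker(Td_j)_{|X_0}=\ker(Tp^n_n)_{|X_0}$ as sets; because $p^n_n$ is a submersion, $\ker(Tp^n_n)$ is a subbundle of $TU^{(n)}_n$, and restricting to $X_0$ gives $\mathfrak{T}_n(X)$ the structure of a smooth vector subbundle of $(TX_n)_{|X_0}$. One should remark that this subbundle structure is independent of the choice of $U^{(n)}_{\bu}$: two such choices share a common refinement that is again an induced Kan $n$-simplicial manifold (by intersecting and applying Lemma \ref{blado}), and the kernel bundle restricted to $X_0$ is the same. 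The differentials $\partial_n=(-1)^n Td_n^n$ restrict to bundle maps $\mathfrak{T}_n(X)\to \mathfrak{T}_{n-1}(X)$ by the simplicial identities $d_j d_n = d_{n-1} d_j$ for $j<n$ (which show $d_n$ carries $\bigcap_{j<n}\ker d_j$ into $\bigcap_{j<n-1}\ker d_j$), and $\partial^2=0$ is the standard computation.

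For functoriality on morphisms, the key point is that a simplicial morphism commutes with all face maps, hence $Tf_n$ sends $(TX_n)_{|X_0}$ into $(TY_n)_{|Y_0}$ (using $f_0:X_0\to Y_0$ and the fact that $f_{\bu}$ preserves degeneracies, so the embeddings of base spaces are compatible) and carries $\bigcap_{j<n}\ker(Td^X_j)$ into $\bigcap_{j<n}\ker(Td^Y_j)$. This defines a vector bundle morphism $\mathfrak{T}_n(f):\mathfrak{T}_n(X)\to \mathfrak{T}_n(Y)$ covering $f_0:X_0\to Y_0$; smoothness is immediate since it is a restriction of $Tf_n$. Compatibility with the differentials, i.e. $\mathfrak{T}_{n-1}(f)\circ\partial^X_n=\partial^Y_n\circ\mathfrak{T}_n(f)$, follows from $f_{n-1}d^X_n=d^Y_n f_n$ after applying $T(-)$ and restricting. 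Functoriality $(\mathfrak{T}(g\circ f)=\mathfrak{T}(g)\circ\mathfrak{T}(f)$ and $\mathfrak{T}(\mathrm{id})=\mathrm{id})$ is then inherited from the chain rule for tangent functors. Here Corollary \ref{kaisa} is the conceptually clean way to phrase the argument: it guarantees induced Kan semisimplicial manifolds $U_{\bu}, V_{\bu}$ with $f_{\bu}:U_{\bu}\to V_{\bu}$, so all the relevant kernels are subbundles simultaneously.

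I expect the main obstacle to be the well-definedness of the smooth structure on $\mathfrak{T}_n(X)$ — specifically, pinning down that $\ker(Tp^n_n)_{|X_0}$ genuinely equals $\bigcap_{j=0}^{n-1}\ker(Td_j)_{|X_0}$ as subsets of $(TX_n)_{|X_0}$, and that this does not depend on auxiliary choices. The set-theoretic equality requires care because $p^n_n$ is the product $d_0\times\cdots\times d_{n-1}$ only as a map into the horn space $\bigwedge^n_n(U^{(n)})$, which is itself an iteratively-constructed submanifold of $U^{(n-1)}_{n-1}{}^{\times n}$ rather than the naive product; one must check $Tp^n_n$ and the tuple $(Td_0,\dots,Td_{n-1})$ have the same kernel, which is essentially the statement that the horn space is a limit and the projection respects it. Once this identification is in place, everything else is a routine transport of the simplicial-identity bookkeeping through the tangent functor. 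A secondary, minor subtlety is checking that when $n'>n$ the Kan $n'$-simplicial manifold $U^{(n')}_{\bu}$ restricts (after shrinking) to data computing the same subbundle $\mathfrak{T}_n(X)$, so that the bundles assemble into a single complex over all degrees; this is handled by the common-refinement argument above together with Lemma \ref{blado}.
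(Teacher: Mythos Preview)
Your proof is correct and follows essentially the same strategy as the paper: identify $\mathfrak{T}_n(X)$ as $\ker(Tp^n_n)_{|X_0}$ on an induced Kan object where $p^n_n$ is a submersion, then invoke Corollary~\ref{kaisa} for morphisms. The one minor difference is that the paper works with a single induced Kan \emph{semi}simplicial manifold $U_{\bu}$ (coming directly from Theorem~\ref{yummiiiiiiiiii}) rather than a family of Kan $n$-simplicial manifolds $U^{(n)}_{\bu}$, which sidesteps the compatibility check across different degrees that you flag as a secondary subtlety.
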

\begin{proof}
For a \s manifold $X_{\bu}$ with an induced Kan semi\s manifold $U_{\bu}$ define $\mathfrak{T}_n(X):=\text{ker}\,({Tp^n_n}:{TU_n}\rightarrow T\bigwedge^n_n(U))_{|X_0}$ which is a subbundle of ${TU_n}_{|X_0}$. The vector bundle morphisms $(-1)^nTd_n:TU_n\rightarrow TU_{n-1}$ can be restricted to well-defined smooth maps $\partial_n=(-1)^n{Td_n}_{|X_0}:\mathfrak{T}_n(X)\rightarrow \mathfrak{T}_{n-1}(X)$. Thus $(\mathfrak{T}_{\bu}(X),\partial_{\bu})$ is a chain complex of vector bundles.
\\
\noindent Given a \s morphism $f_{\bu}:X_{\bu}\rightarrow Y_{\bu}$ between \s manifolds, by Corollary \ref{kaisa} there exist induced Kan semi\s mani\-folds $U_{\bu}, V_{\bu}$ so that $f_{\bu}$ can be restricted to a well-defined semi\s morphism $f_{\bu}:U_{\bu}\rightarrow V_{\bu}$. As $Tf_n$ maps $\mathfrak{T}_n(X)$ to $\mathfrak{T}_n(Y)$ and $\mathfrak{T}_n(X)$, $\mathfrak{T}_n(Y)$ are subbundles of ${TU_n}_{|X_0},{TV_n}_{|Y_0}$, the map ${Tf_n}_{|X_0}:{TU_n}_{|X_0}\rightarrow {TV_n}_{|Y_0}$ can be restricted to $\mathfrak{T}_n(f):=Tf_n:\mathfrak{T}_n(X)\rightarrow \mathfrak{T}_n(Y)$. Note that the definition of the functor $\mathfrak{T}$ does not depend on the choices made.
\end{proof}

\begin{rema}
\label{twoversions}
Equivalently, the tangent complex of a \s manifold $X_{\bu}$ may also be defined as the complex of vector bundles $(\tilde{\mathfrak{T}_{\bu}}(X),\tilde{\partial}_{\bu})$ whose objects are 
\begin{gather*}
    \tilde{\mathfrak{T}}_n(X):=\begin{cases}TX_0\:\:\:\:\:\:\:\:\:\:\:\:\:\:\:\:\:\:\:\:\:\:\:\:\:\:\:\:\:\:\:\:\:\:\:\:\:\:\:\:\:\:\:\:\:\:\:\:\:\:\:\:\:\:\:\:\:\,\:\:\:\:\:\:\:\:\:\:\:\:\:n=0\\
    \bigcap_{j=1}^{n}\text{ker}\,(Td_j:{TX_n}_{|X_0}\rightarrow {TX_{n-1}}_{|X_0}),\:\:\:n\geq 1
  %  \text{ker}\,({Tp^n_0}_{|X_0}:{TX_n}_{|X_0}\rightarrow %T\bigwedge^n_0(X))\:\:\:\text{if}\:\:n>0\\
    \end{cases}
\end{gather*}
and whose differentials are given by $\tilde{\partial}_n=Td_0^n$. Analogously like before, $\tilde{\mathfrak{T}}$ proves to be a well-defined functor valued in the category of VB-chain complexes. Using Lemma \ref{blado}, the isomorphism presented in Lemma \ref{useless} can be easily extended to a natural isomorphism between the two versions of the tangent complex. The second definition will be used in the following.

\end{rema}

\section{Differentiating \s manifolds}
\noindent Following the presentation of essential preliminary definitions and the introduction of the tangent functor, we will prove that the tangent functor of a \s manifold is representable by its tangent complex.\\
\noindent For the sake of better readability we recall several results and definitions from \cite{ZR} which are most relevant for the purpose of this paper in appendix B.

\subsection{Preliminary definitions}
\subsubsection{$\mathbb{Z}$-\,and $\mathbb{N}$-graded manifolds}
In the following we give some basic definitions about $\mathbb{Z}$-graded and $\mathbb{N}$-graded manifolds. Regarding $\mathbb{Z}$-graded manifolds we adopt the conventions of \cite{ZR} and refer the reader to \cite{vysoki} and \cite{salni} for a comprehensive introduction on $\mathbb{Z}$-graded manifold theory.
\begin{defi}
    A \textbf{$\mathbf{\mathbb{Z}}$-graded manifold} is a pair $\mathcal{M}=(M,C(\mathcal{M}))$ where $M$ is a smooth manifold, called the \textbf{body}, and $C(\mathcal{M})$ is a $\mathbb{Z}$-graded commutative unital algebra, called the \textbf{function algebra} of $\mathcal{M}$. Further, there is an isomorphism $C(\mathcal{M})\cong \Gamma S^{\bu}(E_{\bu}^{\ast})$ where $E_{\bu}^{\ast}$ is the dual of a $\mathbb{Z}$-graded vector bundle $E_{\bu}$ and $S^{\bu}$ denotes the graded symmetric product.\\
    A morphism between $\mathbb{Z}$-graded manifolds $\mathcal{M,N}$ is defined as a $\mathbb{Z}$-graded algebra morphism $C(\mathcal{M})\leftarrow C(\mathcal{N})$.
\end{defi}
\noindent If in the previous definition the $\mathbb{Z}$-grading is replaced by an $\mathbb{N}$-grading and $E_{\bu}$ is a $\mathbb{Z}_{<0}$-graded vector bundle, one obtains the definition of an \textbf{N-manifold}. Because of Batchelor´s theorem for N-manifolds (see \cite[Thm.\,1]{Nman}), this definition is equivalent to the following (classical) definition of N-manifolds. 
\begin{defi}
    An N-manifold $\mathcal{M}=(M,C_\mathcal{M})$ is a smooth manifold $M$ endowed with a structure sheaf $C_{\mathcal{M}}$ which is a locally freely generated sheaf of $\mathbb{N}$-graded commutative unital $C^{\infty}_M$-algebras whose degree 0 term is $(C_{\mathcal{M}})_0=C^{\infty}_M$.
\end{defi}
\noindent As for morphisms between $N$-manifolds, there exists a 1-1-correspondence between $\mathbb{N}$-graded algebra morphisms $C_{\mathcal{M}}(M)\leftarrow C_{\mathcal{N}}(N)$ between the algebras of global sections and the following definition.
\begin{defi}
\label{Nmanmorsheaf}
    A morphism of N-manifolds $\phi=(\underline{\phi},\phi^{\ast}):\mathcal{M}\rightarrow \mathcal{N}$ is defined as a smooth map $\underline{\phi}:M\rightarrow N$, called the \textbf{underling map}, together with a morphism $\phi^{\ast}:C_{\mathcal{N}}\rightarrow \underline{\phi}_{\ast}C_{\mathcal{M}}$ of sheaves of graded $C^{\infty}_N$-algebras where smooth functions on $N$ act on $\underline{\phi}_{\ast}C_{\mathcal{M}}$ via the pullback $(\underline{\phi})^{\ast}$.
\end{defi}

\noindent Let $\catname{\mathbb{Z}_{<0}VB}, \catname{\mathbb{Z}VB}, \catname{\mathbb{N}Mfd}, \catname{\mathbb{Z}Mfd}$ denote the categories of $\mathbb{Z}_{<0}$-graded and $\mathbb{Z}$-graded vector bundles, N-manifolds and $\mathbb{Z}$-graded manifolds, respectively. There exist obvious functors $\mathfrak{S}:\catname{\mathbb{Z}_{<0}VB}\rightarrow \catname{\mathbb{N}Mfd}$ and $\mathfrak{S}:\catname{\mathbb{Z}VB}\rightarrow \catname{\mathbb{Z}Mfd}$ sending a graded vector bundle $E_{\bu}\rightarrow M$ to its associated graded manifold $\mathfrak{S}(E_{\bu})=(M, \Gamma S^{\bu }(E_{\bu}^{\ast}))$.
 For simplicity the latter will often also be denoted by $E_{\bu}$. Analogously, this notation applies for graded vector bundle morphisms.
\subsubsection{The infinitesimal object $D_{\bu}$}

\begin{defi}
   Let $D=(\star,\Gamma S^{\bu}((\mathbb{R}[-1])^{\ast}))$ denote the graded manifold associated to the graded vector bundle $\mathbb{R}[-1]$ over a point. Its graded function algebra $C(D)=\mathbb{R}[\epsilon]$ is generated by an element $\epsilon$ of \mbox{degree -1.}\par
   \noindent Further define the \s graded manifold $D_{\bu}$ as the “nerve of the pair groupoid of $D$". It is level-wise given by $D_n=D^{\times [n]}=D^{\times(n+1)},\:n\in \mathbb{N}_0$, its face and degeneracy maps $d_i^D,s_i^D:D_n\rightarrow D_{n\mp 1}$, $0\leq i\leq n$, forget and repeat the $i$th component, respectively.
\end{defi}
\begin{defi}
   For two graded manifolds $\mathcal{M},\mathcal{N}$ define the presheaf of graded manifolds $\text{Hom}(\mathcal{M},\mathcal{N})$ which sends a graded manifold $\mathcal{T}$ to the set $\text{Hom}(\mathcal{M},\mathcal{N})(\mathcal{T})=\hom_{\catname{\mathbb{Z}Mfd}}(\mathcal{M}\times \mathcal{T},\mathcal{N})$ and is given by post-composition on the level of morphisms.
\end{defi}
\begin{rema}
  The internal hom is an obvious bifunctor $\text{Hom}:\catname{\mathbb{Z}Mfd^{op}}\times \catname{\mathbb{Z}Mfd}\rightarrow \catname{pSH(\mathbb{Z}Mfd)}$, $(\mathcal{M},\mathcal{N})\mapsto \text{Hom}(\mathcal{M},\mathcal{N})$ which acts on the first and second component by pre-and postcomposition, respectively. For a 
fixed graded manifold $\mathcal{M}$, $\text{Hom}(D_{\bu},\mathcal{M})$ forms a co\s presheaf of graded manifolds.  
\end{rema}

\subsection{The tangent functor}
\begin{defi}\cite{severa, ZR}
   The tangent functor $\textswab{T}:\catname{sMfd}\rightarrow \catname{pSH}(\catname{\mathbb{Z}Mfd})$ assigns a \s manifold $X_{\bu}$ to the presheaf of graded manifolds
   \begin{gather*}
       \textswab{T}(X)=\text{Hom}(D_{\bu},X_{\bu}):\mathcal{T}\mapsto \text{Hom}(D_{\bu},X_{\bu})(\mathcal{T})\\
              = \left\{ (f_n)_{n\in \mathbb{N}_0}\in \bigsqcap_{n\in \mathbb{N}_0}\text{hom}_{\catname{\mathbb{Z}Mfd}}(D_n\times \mathcal{T},X_n)\ \middle\vert \begin{array}{l}
   f_n,n\in \mathbb{N}_0, \:\text{commute with the face and degeneracy maps}\\
    \text{of the \s graded manifolds}\:D_{\bu}\times \text{const}_{\mathcal{T}}\:\text{and}\:X_{\bu}
  \end{array}\right\}
   \end{gather*}
   where $const_{\mathcal{T}}$ denotes the constant \s graded manifold with value $\mathcal{T}$. The tangent functor acts by post-composition on the level of morphisms.
\end{defi}
\vspace{1.5mm}
\noindent Further define the following functors $H^k,\:k\in \mathbb{N}_0$, which “approximate” $\textswab{T}$ on finitely many levels.

\begin{defi}\cite{severa, ZR} Let $\star$ be the base point of $D= \mathbb{R}[-1]$. Set $H^0:\catname{sMfd}\rightarrow \catname{pSH}(\catname{\mathbb{Z}Mfd})$ to be the functor which assigns a \s manifold $X_{\bu}$ to the presheaf of graded manifolds $H^0(X)=\text{Hom}(\star, X_0)$ and is given by post-composition on the level of morphisms.\\
For $k\in \mathbb{N}$ define $H^k:\catname{sMfd}\rightarrow \catname{pSH}(\catname{\mathbb{Z}Mfd})$ to be the functor which assigns a \s manifold $X_{\bu}$ to the presheaf of graded manifolds 
\begin{gather*}
    H^k(X):\mathcal{T}\mapsto H^k(X)(\mathcal{T})\\
     = \left\{ (f_0,\dots,f_k)   \ \middle\vert \begin{array}{l}
f_l\in \text{hom}_{\catname{\mathbb{Z}Mfd}}(D_l\times \mathcal{T},X_l)\:\,\text{for}\:l<k,\:f_k\in \text{hom}_{\catname{\mathbb{Z}Mfd}}(\star \times D^k\times \mathcal{T},X_k),\\
f_0,\dots f_k \:\text{satisfy the conditions i)-iv)}
  \end{array}\right\}
\end{gather*}
and is given by post-composition on the level of morphisms. Setting $\iota$ to be the map $\iota:\star\rightarrow D$
%associated to graded vector bundle morphism from $(0_{\bu}\rightarrow \star) $ to $ (\mathbb{R}%%%[-1]\rightarrow \star)$
and  $\iota_k:=\iota\times id_{D^k}:\star\times D^k\rightarrow D_k$, the morphisms $f_0,\dots,f_k$ satisfy the following conditions:\\
i)\,$(s_i^X\times id_{\mathcal{T}})f_l=f_{l+1}(s_i^D\times id_{\mathcal{T}})$ for $0\leq i\leq l\leq k-2$,\\
ii)\,$(d_i^X\times id_{\mathcal{T}})f_{l+1}=f_l(d_i^D\times id_{\mathcal{T}})$ for 
$0\leq i\leq l+1,\:l\leq k-2$,\\
iii)\,$(s_i^X\times id_{\mathcal{T}})f_{k-1}(\iota_{k-1}\times id_{\mathcal{T}})=f_k(s_i^D\times id_{\mathcal{T}})(\iota_{k-1}\times id_{\mathcal{T}})$ for\:\:$0\leq i\leq k-1$,\\
iv)\,$(d_i^X\times id_{\mathcal{T}})f_k=f_{k-1}(d_i^D\times id_{\mathcal{T}}) (\iota_k\times id_{\mathcal{T}})$ for $0\leq i\leq k$.
\end{defi}
\noindent We point out that the functor $H^k$ only sees %the first $k+1$
the levels $0,\dots,k$ of a \s manifold, thus may be defined on the category of $l$-\s manifolds for $l\geq k$. 
 For $k\in \mathbb{N}_0$ there exist obvious natural transformations $p_{k+1,k}:H^{k+1}\rightarrow H^k$, $p_{\infty,k}:H\rightarrow H^k$ satisfying $p_{k+1,k}\circ p_{\infty,k+1}=p_{\infty,k}$ and $\textswab{T}$ indeed proves to be the limit of the diagram
\begin{gather*}
    \dots\rightarrow H^3\xrightarrow[p_{3,2}]{}H^2
    \xrightarrow[p_{2,1}]{}H^1\xrightarrow[p_{1,0}]{}H^0
\end{gather*}
(see \cite[Lemma\,3.5]{ZR}).
\noindent Moreover, for a \s manifold $X_{\bu}$ the presheaves $H^k(X),\:k\in \mathbb{N}$, can be recursively defined as fiber products in the following way.
\begin{prop}\label{fiberprod}\cite{ZR}
   Let $X_{\bu}$ be a \s manifold. For $k\in \mathbb{N}$ the presheaf $H^k(X)$ can be realized as the pullback
   \begin{center}
% https://tikzcd.yichuanshen.de/#N4Igdg9gJgpgziAXAbVABwnAlgFyxMJZANgBoAGAXVJADcBDAGwFcYkQAJCAWwAoAdfnBz0AToLzd4AAgAiAPQDWpABoB9RQEoQAX1LpMufIRRkAjNTpNW7Ln0HCxErFLhz5wRQFoze9Vo9nV2lkbwsdIJk7ASERcX5JGQVldU8fHU1AhJcZUMoHGBxuLDBmOEFgckEdHV19EAxsPAIiclILGgYWNkRODzDavQMm41aKSy6bXo4lXUsYKABzeCJQADNRHiQ2kBwIJDNO6x6QXjg1cnlZTOBBemEI7ODBKAgccqeZM7U03yubu4PSJuXhQNRmeQqTQ-QE4R6JNwvN4fBHSUEaSHQ278e5wkA0Rj0ABGMEYAAVDM0TCBGDA1jg6utNtwkAAmGh7A5HbrsXjfS5QmE4oGfRH8V7vYFo86-HSYoW4+E5EFgiHXLKKqVIyWitFgxT-DUPTSCADGWFEpukADEfgN8TTiaSKSMWr1RFhFgALBlDEAbLaIADMHP2iHZVh5vTQdtI9r9AZZwdD225UxAtsUcx0QA
\begin{tikzcd}
H^k(X) \arrow[d, "{p_{k,k-1}^X}"] \arrow[rrrrrr, "F_k^X"]                                                                                                  &  &  &  &  &  & {Hom(\star\times D^k,X_k)} \arrow[d, "(s_0^D)^{\ast}\times \dots\times (s_{k-1}^D)^{\ast}\times (d_1^X)_{\ast}\times \dots\times (d_k^X)_{\ast}"] \\
H^{k-1}(X) \arrow[rrrrrr, "((s_0^X)_{\ast}\times \dots\times (s_{k-1}^X)_{\ast}\times (d_1^D)^{\ast}\times \dots\times (d_k^D)^{\ast})\circ F_{k-1}"'] &  &  &  &  &  & {Hom(\star\times D^{k-1},X_k)^{\times [k-1]}\times Hom(\star\times D^k,X_{k-1})^{\times [k]\setminus\{0\}}}                                      
\end{tikzcd}
   \end{center}
   where $F_l^X:H^l(X)\rightarrow \text{Hom}(\star\times D^l,X_l),\:l\in \mathbb{N}_0$, denotes the forgetful morphism only remembering the information on the highest level $\text{Hom}(\star\times D^l,X_l)$ of $H^l(X)$.
\end{prop}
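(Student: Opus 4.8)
The plan is to unwind both sides of the claimed pullback square as explicit sets of tuples of $\mathbb{Z}$-graded manifold morphisms and check that the universal property of the fiber product holds on the nose, functorially in the test object $\mathcal{T}$. Since a pullback of presheaves is computed object-wise, it suffices to fix a graded manifold $\mathcal{T}$ and exhibit a natural bijection between $H^k(X)(\mathcal{T})$ and the set
\begin{gather*}
\{(\xi,\eta)\in H^{k-1}(X)(\mathcal{T})\times \mathrm{Hom}(\star\times D^k,X_k)(\mathcal{T})\mid \text{the two images in the lower-right corner agree}\}.
\end{gather*}
First I would describe the forgetful map $F_k^X$: an element of $H^k(X)(\mathcal{T})$ is a tuple $(f_0,\dots,f_k)$ satisfying i)--iv), and $F_k^X$ remembers only $f_k\in\mathrm{hom}(\star\times D^k\times\mathcal{T},X_k)$; the vertical map $p_{k,k-1}^X$ forgets $f_k$. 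The content of the proposition is that the pair $(f_0,\dots,f_{k-1})$ together with $f_k$, subject only to the compatibility conditions that survive after projecting to $\star\times D^k$ resp. $\star\times D^{k-1}$, already encodes exactly the data of an element of $H^k(X)$.

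The key step is to match conditions. Going right then down along the top of the square sends $(f_0,\dots,f_k)$ to $f_k$ and then applies $(s_0^D)^*\times\dots\times(s_{k-1}^D)^*\times(d_1^X)_*\times\dots\times(d_k^X)_*$, producing the tuple
\begin{gather*}
\big(f_k(s_0^D\times\mathrm{id}),\dots,f_k(s_{k-1}^D\times\mathrm{id}),\ d_1^X f_k,\dots,d_k^X f_k\big).
\end{gather*}
Going down then right sends $(f_0,\dots,f_k)$ to $(f_0,\dots,f_{k-1})$, applies $F_{k-1}$ to get $f_{k-1}$, then applies $(s_0^X)_*\times\dots\times(s_{k-1}^X)_*\times(d_1^D)^*\times\dots\times(d_k^D)^*$ to obtain
\begin{gather*}
\big(s_0^X f_{k-1},\dots,s_{k-1}^X f_{k-1},\ f_{k-1}(d_1^D\times\mathrm{id}),\dots,f_{k-1}(d_k^D\times\mathrm{id})\big),
\end{gather*}
all precomposed appropriately with $\iota_{k-1}$ resp. $\iota_k$ so that the domains land in $\star\times D^{k-1}$ resp. $\star\times D^k$ as written in the lower-right object. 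Equality of the first $k$ entries is precisely condition iii), and equality of the last $k$ entries (indexed by $[k]\setminus\{0\}$, since $d_0$ is not imposed here — it is absorbed into how $H^{k-1}$ already constrains $f_{k-1}$ via $f_{k-2}$) is precisely condition iv). The conditions i) and ii) for indices $\leq k-2$ are exactly the statement that $(f_0,\dots,f_{k-1})\in H^{k-1}(X)(\mathcal{T})$. Thus a pair $(\xi,\eta)$ in the fiber product, with $\xi=(f_0,\dots,f_{k-1})\in H^{k-1}(X)(\mathcal{T})$ and $\eta=f_k$, satisfies i)--iv) and hence defines an element of $H^k(X)(\mathcal{T})$; conversely any element of $H^k(X)(\mathcal{T})$ produces such a pair, and the two assignments are mutually inverse and natural in $\mathcal{T}$.

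The main obstacle I anticipate is purely bookkeeping: making sure the index ranges and the $\iota$-restrictions in conditions iii) and iv) line up exactly with the stated maps, in particular that no condition is double-counted (the degeneracy condition i) at level $k-1$ is genuinely new data captured by iii), not already implied by membership in $H^{k-1}$) and that the omission of the $d_0$-slot in $\mathrm{Hom}(\star\times D^{k-1},X_k)^{\times[k-1]}\times\mathrm{Hom}(\star\times D^k,X_{k-1})^{\times[k]\setminus\{0\}}$ is correct — i.e. the $d_0^X f_k = f_{k-1}(d_0^D\times\mathrm{id})(\iota_k\times\mathrm{id})$ instance of iv) for $i=0$ must be shown to follow automatically, which it does because $d_0^D\circ\iota_k$ factors through $\iota_{k-1}$ composed with the structure maps already controlled by $f_{k-2}$ and the $H^{k-1}$-conditions. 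Once the index gymnastics is settled, naturality in $\mathcal{T}$ is immediate since every map in sight is given by pre- or post-composition, and the statement follows. This is essentially the simplicial-manifold transcription of \cite[Lemma 3.5]{ZR}, and I would cite that lemma for the analogous argument there while carrying out the identification above in our setting.
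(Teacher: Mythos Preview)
Your overall plan—fix $\mathcal{T}$, unwind both sides as sets of tuples, and match conditions—is the right one, and the two compositions you write down are correct. But there is a real gap in the inverse construction. You overlook that the component $f_{k-1}$ lives on different domains in $H^k$ versus $H^{k-1}$: in $H^k$ it is a map $D_{k-1}\times\mathcal{T}\to X_{k-1}$, whereas the top component of an element of $H^{k-1}$ is only a map $\star\times D^{k-1}\times\mathcal{T}\to X_{k-1}$ (the projection $p_{k,k-1}$ restricts along $\iota_{k-1}$). So given $(\xi,f_k)$ in the fiber product you cannot simply ``put them together'' to get a tuple in $H^k(X)(\mathcal{T})$; you must first \emph{extend} the top component of $\xi$ to a map on all of $D_{k-1}$.

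This is exactly the role of the omitted $i=0$ slot, and your treatment of it is backwards. Your claim that ``$d_0^D\circ\iota_k$ factors through $\iota_{k-1}$'' is false: the map $d_0^D\circ\iota_k:\star\times D^k\to D_{k-1}$ sends $(\star,b_1,\dots,b_k)\mapsto(b_1,\dots,b_k)$ and is an \emph{isomorphism}, not a map into $\mathrm{im}(\iota_{k-1})$. Consequently condition iv) for $i=0$ is not ``automatic''; rather it \emph{determines} the extension, namely one is forced to set $f_{k-1}:=d_0^X\circ f_k$ under the identification $D_{k-1}\cong\star\times D^k$. Having done so, two things remain to be checked: (a) this $f_{k-1}$ restricts along $\iota_{k-1}$ to the top component of $\xi$ (use the $i=0$ instance of your condition iii) together with $d_0^Xs_0^X=\mathrm{id}$); and (b) the $H^k$-conditions i), ii) at level $l=k-2$ hold on the \emph{full} domains $D_{k-2}$, $D_{k-1}$---membership of $\xi$ in $H^{k-1}$ only gives the restrictions of these along $\iota_{k-2}$, $\iota_{k-1}$ (they are conditions iii), iv) there). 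Check (b) goes through via the simplicial identities $d_i d_0=d_0 d_{i+1}$ and $d_0 s_{i+1}=s_i d_0$, combined with the fiber-product equalities for indices $\geq 1$ and the $i=0$ case of the $H^{k-1}$-condition iv). Once these verifications are supplied your argument is complete; as written, the inverse map is not even well-defined.
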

\begin{rema}
\label{pullback}
    Because limits of functors are defined object-wise, there is an analogous statement about the recursive definition of the functors $H^k,\:k\in \mathbb{N}$. In particular, given a \s morphism $f_{\bu}:X_{\bu}\rightarrow Y_{\bu}$ between \s manifolds, $H^k(f)$ is the unique natural transformation $H^k(X)\rightarrow H^k(Y)$ satisfying $F^Y_k\circ H^k(f)=f_{\ast}\circ F_k^X$ and $p^Y_{k,k-1}\circ H^k(f)=H^{k-1}(f)\circ p^X_{k,k-1}$.
\end{rema} 
\noindent For a higher Lie groupoid $X_{\bu}$ the presheaves $H^0(X)$, $H^1(X)$ are canonically represented by $X_0$, $\text{ker}\,{Tp^1_0}_{|X_0}[1]$, respectively (see \cite[sec.\,3.2]{ZR}). In general, the presheaf $H^k(X)$ of a $k$-\s manifold $X_{\bu}$ satisfying suitable Kan conditions is non-canonically represented by the graded vector bundle consisting of the first $k$ levels of the tangent complex of $X_{\bu}$.

\begin{thm}\label{yoooo}\cite{ZR}\footnote{While not explicitly stated in \cite{ZR}, the presented statement can be deduced from Theorem 3.3, Lemma 3.8, and Lemma 3.18 in the same reference.}
  For $n\in \mathbb{N}_{\geq 2}$ let $X_{\bu}$ be an $n$-\s manifold satisfying Kan conditions $\text{Kan}(m,i)$ for $0\leq i\leq m$, $1\leq m\leq n$ (with horn projections being surjective submersions). Then the presheaves $H^l(X)$ are representable for $0\leq l\leq n$. Given a choice of compatible\footnote{A definition of compatible connections is given in section B.2 in the appendix, see also \cite[sec.\,2.5]{ZR}.\label{fnconn}} connections on $T^{\ast}X_k,\:T^{\ast}\bigwedge^k_0(X)$ w.r.t. $p^k_0:X_k\rightarrow \bigwedge^k_0(X)$ for $2\leq k\leq n$, one can construct (non-canonical) natural isomorphisms $H^k(X)\cong \text{Yon}(\bigoplus_{i=1}^k\text{ker}\,{Tp^i_0}_{|X_0}[i])$ under which the natural transformations $p^X_{k,k-1}:H^k(X)\rightarrow H^{k-1}(X)$ correspond to the projections $\bigoplus_{i=1}^k\text{ker}\,{Tp^i_0}_{|X_0}[i]\rightarrow \bigoplus_{i=1}^{k-1}\text{ker}\,{Tp^i_0}_{|X_0}[i]$. In particular, if $X_{\bu}$ is a Lie $\infty$-groupoid, the tangent functor $\textswab{T}(X)$ is representable and with a choice of compatible connections on $T^{\ast}X_k,\:T^{\ast}\bigwedge^k_0(X),\:k\geq 2$, one can construct a (non-canonical) natural isomorphism $\textswab{T}(X)\cong \text{Yon}( \bigoplus_{i=1}^{\infty}\text{ker}\,{Tp^i_0}_{|X_0}[i])$.
\end{thm}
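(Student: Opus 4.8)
The plan is to prove representability of the presheaves $H^l(X)$ for $0\le l\le n$ by induction on $l$, exploiting the recursive pullback description of $H^k(X)$ from Proposition~\ref{fiberprod}, and then to obtain the statement about $\textswab{T}(X)$ by passing to the limit of the resulting tower. The cases $l=0,1$ are the known ones recalled just before the theorem: $H^0(X)=\text{Hom}(\star,X_0)$ is represented by $X_0$ and $H^1(X)$ by $\text{ker}\,{Tp^1_0}_{|X_0}[1]$ (see \cite[sec.\,3.2]{ZR}), and these are exactly the degree-$0$ and degree-$1$ parts of the graded vector bundle in the statement, with $p^X_{1,0}$ the evident projection.

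For the inductive step, assume $2\le k\le n$ and $H^{k-1}(X)\cong\text{Yon}\!\big(\bigoplus_{i=1}^{k-1}\text{ker}\,{Tp^i_0}_{|X_0}[i]\big)$ compatibly with the projections. By Proposition~\ref{fiberprod}, $H^k(X)$ is the pullback of $H^{k-1}(X)$ and $\text{Hom}(\star\times D^k,X_k)$ over a finite product of mapping presheaves of the form $\text{Hom}(\star\times D^{k-1},X_k)$ and $\text{Hom}(\star\times D^k,X_{k-1})$. First I would note that each of these corners is representable: the mapping presheaf $\text{Hom}(\star\times D^m,N)$ of an ordinary manifold $N$ is represented by an explicit graded manifold obtained by iterating the shifted tangent bundle construction on $N$ (so that its degree-$\ge1$ part is built from $TN$), $H^{k-1}(X)$ is representable by hypothesis, and the whole square is the image under $\text{Yon}$ of a diagram of graded manifolds. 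The key point is that the right-hand vertical arrow of the square is, on its top stratum, induced by the horn projection $p^k_0=d_1\times\dots\times d_k\colon X_k\to\bigwedge^k_0(X)$, which by hypothesis is a surjective submersion; hence the corresponding morphism of graded manifolds is a submersion, the fibre product exists in $\catname{\mathbb{Z}Mfd}$, and so $H^k(X)$ is representable.

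Next I would identify the representing object explicitly. A choice of connections on $T^\ast X_k$ and $T^\ast\bigwedge^k_0(X)$ compatible with respect to $p^k_0$ splits, along $X_0$, the exact sequence $0\to\text{ker}\,{Tp^k_0}_{|X_0}\to{TX_k}_{|X_0}\to(p^k_0)^\ast T\bigwedge^k_0(X)_{|X_0}\to0$; this splitting lifts to a splitting of the pullback square, exhibiting $H^k(X)$ as the product of $H^{k-1}(X)$ with a ``new'' top-degree piece, namely the sub-presheaf of $\text{Hom}(\star\times D^k,X_k)$ of maps all of whose lower strata (images under the faces $d_i$ and under restriction along $\iota_k$ and along the degeneracies $s_i$) are trivial. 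The heart of the argument, essentially \cite[Lem.\,3.8,\,Lem.\,3.18]{ZR}, is that this piece is precisely $\text{Yon}(\text{ker}\,{Tp^k_0}_{|X_0}[k])$. Combined with the inductive hypothesis this gives $H^k(X)\cong\text{Yon}\!\big(\bigoplus_{i=1}^{k}\text{ker}\,{Tp^i_0}_{|X_0}[i]\big)$, and because the left vertical arrow of the pullback square is $p^X_{k,k-1}$, the isomorphism carries $p^X_{k,k-1}$ to the projection forgetting the top summand, closing the induction.

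Finally, if $X_{\bu}$ is a Lie $\infty$-groupoid then all the Kan conditions needed above hold with surjective-submersion horn projections at every level, so after fixing compatible connections on $T^\ast X_k,\,T^\ast\bigwedge^k_0(X)$ for each $k\ge2$ the induction runs for all $k$. By \cite[Lemma\,3.5]{ZR}, $\textswab{T}(X)=\lim_k H^k(X)$, and the constructed isomorphisms identify this tower with $\big(\text{Yon}(\bigoplus_{i=1}^{k}\text{ker}\,{Tp^i_0}_{|X_0}[i])\big)_k$ under its truncation maps; since in each fixed degree only finitely many summands contribute, the limit is $\text{Yon}(\bigoplus_{i=1}^{\infty}\text{ker}\,{Tp^i_0}_{|X_0}[i])$. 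The step I expect to be the main obstacle is showing that the iterated fibre product is representable by an honest graded manifold rather than merely by a presheaf: this is exactly where the Kan hypothesis enters, through the submersivity of the map of representing objects induced by $p^k_0$, and the accompanying bookkeeping with compatible connections --- which converts the abstract fibre product into the explicit direct sum --- is the other delicate point.
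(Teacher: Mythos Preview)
Your proposal is correct and follows essentially the same approach as the paper: the paper does not give an independent proof but cites \cite{ZR} and sketches in the subsequent remark exactly the inductive argument you describe --- use Proposition~\ref{fiberprod} to write $H^k(X)$ as a pullback, represent the corners via Lemma~\ref{simpleD} and Corollary~\ref{rep2}, and then invoke \cite[Lem.\,3.8,\,3.18]{ZR} to identify the fibre product as the claimed direct sum. One small point of framing: the role of the compatible connections is not primarily to split the short exact sequence of tangent bundles along $X_0$, but rather (via Corollary~\ref{moreD}) to make the representing isomorphisms $\text{Hom}(D^k,M)\cong\text{Yon}(T[1]^kM)$ functorial, so that the pullback square of presheaves descends to an explicit diagram of $\mathbb{Z}_{<0}$-graded vector bundles whose limit can then be computed degree-wise; this is the mechanism by which $\text{ker}\,{Tp^k_0}_{|X_0}[k]$ appears, rather than a direct splitting argument.
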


\begin{rema}
The previous theorem is proven inductively in \cite{ZR}. The essential idea of the proof is as follows. Given compatible connections on $T^{\ast}X_k,\:T^{\ast}\bigwedge^k_0(X),\:k\geq 2$, and a natural isomorphism $H^{k-1}(X)\cong  \text{Yon}( \bigoplus_{i=1}^{k-1}\text{ker}\,{Tp^i_0}_{|X_0}[i])$, the diagram given in Proposition \ref{fiberprod} descends to a diagram of N-manifolds by Corollary \ref{moreD} and Lemma \ref{simpleD}. It is sufficient to show that $\bigoplus_{i=1}^{k}\text{ker}\,{Tp^i_0}_{|X_0}[i]$ is a limit of the latter. The resulting natural isomorphism $H^{k}(X)\cong \text{Yon}( \bigoplus_{i=1}^{k}\text{ker}\,{Tp^i_0}_{|X_0}[i])$ depends on the choice of connections and the specific way the fiber product $\bigoplus_{i=1}^{k}\text{ker}\,{Tp^i_0}_{|X_0}[i]$ is constructed in Lemma 3.18 of \cite{ZR}. 
%The same applies to the natural isomorphism $H^{k}(X)\cong  Yon( %\bigoplus_{i=1}^{k}\text{ker}\,{Tp^i_0}_{|X_0}[i])$ stated in Lemma \ref{weakercase}.
\end{rema}

%\newpage
\subsection{Representability of the tangent functor}
We now want to find an analogous statement to Theorem \ref{yoooo} for the case of general \s manifolds. To achieve this, we employ the following approach.
\begin{enumerate}[label={\bfseries Step \arabic*:}]
\item At first we show that Theorem \ref{yoooo} still holds under the weaker assumption that horn projections are only submersions. In particular, it follows that for a Kan $n$-simplicial manifold $U_{\bu}$ which is induced by a \s manifold $X_{\bu}$, the presheaves $H^l(U),\:0\leq l\leq n$, are representable.
\item With $j_{U,X}:U_{\bu}\rightarrow X_{\bu}$ being the inclusion, we will show that $H^l(j_{U,X}):H^l(U)\rightarrow H^l(X),\:0\leq l\leq n$, are natural isomorphisms. In particular, the presheaves $H^n(X),n\in \mathbb{N}_0$, are representable and only depend on the local structure of $X_{\bu}$ near the base space $X_0$.\\
We proceed by making the following substeps:
\begin{enumerate}[label={\bfseries Step 2\alph*:}]
\item proving that $H^l(j_{U,X}):H^l(U)\rightarrow H^l(X),\:0\leq l\leq n$, are natural isomorphisms if $U_{\bu}$ satisfies a certain extra assumption,
\item proving that for Kan $n$-\s manifolds $\tilde{U}_{\bu}\subseteq U_{\bu}$ induced by $X_{\bu}$, 
$H^l(j_{\Tilde{U},U}):H^l(\tilde{U})\rightarrow H^l(U),\:0\leq l\leq n$, are natural isomorphisms where $j_{\Tilde{U},U}$ denotes the inclusion $\Tilde{U}_{\bu}\hookrightarrow U_{\bu}$,
\item proving the statement by combining the two previous subresults.
\end{enumerate}
\item As the tangent functor $\textswab{T}(X)$ is the limit of representable presheaves $H^n(X),\:n\in \mathbb{N}_0$, it will prove to be representable by the N-manifold associated to the graded vector bundle $\bigoplus_{i=1}^{\infty} \text{ker}\,{Tp^i_0}_{|X_0}[i]$. 
\end{enumerate}
\noindent\textbf{(Step 1)}
    \begin{lemm}
    \label{weakercase}
    Let $X_{\bu}$ be a Kan $n$-\s manifold\footnote{Note that the essential difference to Theorem \ref{yoooo} is that horn projections are assumed to be submersions rather than surjective submersions.}. Then $H^l(X)$ is representable for $0\leq l\leq n$. Given a choice of compatible\footref{fnconn} connections on $T^{\ast}X_k,T^{\ast}\bigwedge^k_0(X)$ w.r.t. $Tp^k_0:TX_k\rightarrow T\bigwedge^k_0(X)$ for $2\leq k\leq n $, one can construct natural isomorphisms $H^k(X)\cong \text{Yon}(\bigoplus_{i=1}^k\,\text{ker}\,{Tp^i_0}_{|X_0}[i])$.
    \end{lemm}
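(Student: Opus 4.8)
The plan is to revisit the inductive proof of Theorem \ref{yoooo} given in \cite{ZR} and verify that the hypothesis "horn projections are surjective submersions" can be replaced by "horn projections are submersions" without breaking any step. The base cases $l=0,1$ are handled exactly as in \cite[sec.\,3.2]{ZR}: $H^0(X)$ is represented by $X_0$ and $H^1(X)$ by $\text{ker}\,{Tp^1_0}_{|X_0}[1]$, and neither construction uses surjectivity of $p^1_0$ beyond the submersion property (which gives that $\text{ker}\,{Tp^1_0}_{|X_0}$ is a genuine subbundle). For the inductive step I would take the pullback square of Proposition \ref{fiberprod} and, given a natural isomorphism $H^{k-1}(X)\cong \text{Yon}(\bigoplus_{i=1}^{k-1}\text{ker}\,{Tp^i_0}_{|X_0}[i])$ together with compatible connections on $T^{\ast}X_k$ and $T^{\ast}\bigwedge^k_0(X)$, argue that the square descends to a pullback square of N-manifolds (via Corollary \ref{moreD} and Lemma \ref{simpleD}, exactly as in the cited remark) and that $\bigoplus_{i=1}^k\text{ker}\,{Tp^i_0}_{|X_0}[i]$ is a limit of this descended diagram.

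The key point to isolate is where surjectivity (as opposed to merely the submersion property) is actually invoked in \cite{ZR}. My expectation is that it is used only to guarantee that certain fiber products of N-manifolds exist and have the expected underlying body, i.e. that the relevant maps of bodies are surjective submersions so the pullback is again a manifold. In the local/germ setting this is automatic: the bodies of all the N-manifolds occurring are (germs of) the restrictions $X_k{}_{|X_0}$, the base space $X_0$ embeds into every level, and the horn projections restrict to the corresponding horn projections of the simplicial vector space $T_{x_0}X_\bu$, which are surjective because simplicial vector spaces are Kan (together with Lemma \ref{yone} for the generalized horns). Thus along the zero section the maps are honest surjective submersions, and since "being a submersion" is an open condition, after shrinking the neighborhoods $U_k$ — which is harmless in $\mathbf{\mathscr{G}}_{X_0}$ and is precisely the flexibility Theorem \ref{yummiiiiiiiiii} provides — the relevant maps become surjective submersions onto open neighborhoods of the image of $X_0$. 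Concretely: if $p^k_0:X_k\to\bigwedge^k_0(X)$ is a submersion that is surjective along $X_0$, then shrinking its codomain to the open set $p^k_0(U_k)$ for a small enough neighborhood $U_k$ of $X_0$ makes it a surjective submersion, and this does not change the germ at $X_0$.

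With that observation in hand the remainder of the argument is a line-by-line transcription of the proof in \cite{ZR}: the forgetful morphism $F_k$, the maps $(s_i^D)^\ast$ and $(d_i^X)_\ast$, the descent of $\text{Hom}(\star\times D^k,X_k)$ to an N-manifold whose body is $X_k{}_{|X_0}$ and whose negative part assembles from the vertical tangent bundle of $p^k_0$, and the identification of the pullback with $\bigoplus_{i=1}^k\text{ker}\,{Tp^i_0}_{|X_0}[i]$ all go through verbatim once the required submersions are surjective. The compatibility of the chosen connections is used, as in \cite{ZR}, to produce the splitting realizing the fiber product as a direct sum; this part is purely formal and does not see surjectivity at all. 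Finally, naturality of the isomorphisms and compatibility with $p^X_{k,k-1}$ follows from the universal property of the pullback and Remark \ref{pullback}, again as in \cite{ZR}. The one genuine obstacle — and the only place where any new thought is needed beyond citing \cite{ZR} — is the bookkeeping of the neighborhood-shrinking: one must check that the finitely many submersions involved at level $k$ can be made simultaneously surjective onto open neighborhoods of $X_0$ by a single shrinking of $U_k$ (and, inductively, that this shrinking is compatible with the ones performed at lower levels), which is exactly the content already packaged in Theorem \ref{yummiiiiiiiiii} and Corollary \ref{blubiblupp}.
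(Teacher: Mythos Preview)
Your overall plan --- revisit the induction from \cite{ZR} and check that surjectivity of the horn projections is not actually needed --- matches the paper's, and your treatment of the base cases $l=0,1$ is fine. The gap is in the inductive step, specifically in your proposed mechanism for removing the surjectivity hypothesis.

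First, the shrinking manoeuvre does not prove the lemma as stated. Lemma~\ref{weakercase} concerns a fixed Kan $n$-simplicial manifold $X_\bullet$ and the presheaves $H^l(X)$; if you replace $X_\bullet$ by a smaller $U_\bullet$ you are proving representability of $H^l(U)$, not of $H^l(X)$. Passing between the two is exactly the content of Step~2 (Lemmas~\ref{UX}, \ref{UU}, Corollary~\ref{UXgen}), and those arguments \emph{use} Lemma~\ref{weakercase}. So invoking ``harmless in $\mathscr{G}_{X_0}$'' here is circular. Moreover, Theorem~\ref{yummiiiiiiiiii} and Corollary~\ref{blubiblupp} only produce horn projections that are submersions, never surjective submersions; shrinking the domain of a submersion does not make it surjective onto the horn space, and shrinking the codomain $\bigwedge^k_0(X)$ is not a free move since that space is determined by $X_{k-1}$.

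Second, your diagnosis of where surjectivity enters is off. It is not used for the mere \emph{existence} of the relevant fiber products of N-manifolds (those are computed degree-wise in $\mathbb{Z}_{<0}$-graded vector bundles and exist regardless). The one genuine use in \cite{ZR} is in their Lemma~3.16, where a specific fiber product is \emph{identified} with $T[1]^kX_k/\ker Tp^k_0[k]$; the key step is the isomorphism $(p^k_0)^\ast T\bigwedge^k_0(X)\cong TX_k/\ker Tp^k_0$. The paper's fix (Lemma~\ref{proofanhang}) is simply to observe that this isomorphism already holds whenever $p^k_0$ is a submersion --- no surjectivity, no shrinking. Once that single lemma is redone, the rest of the inductive argument in \cite{ZR} carries over verbatim, exactly as you anticipated. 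So the missing idea is not a neighbourhood-bookkeeping argument but a direct re-proof of Lemma~3.16 under the weaker hypothesis.
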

    
     \begin{proof}
    We prove this inductively. As a base case, we first show that $H^1(X)$ is naturally represented by $\text{ker}\,{Tp^1_0}_{|X_0}[1]=\text{ker}\,{Td_1}_{|X_0}[1]$ for any Kan 1-\s manifold.
   % 1-\s manifold satisfying 1-Kan conditions, i.e. %$d_0,d_1:X_1\rightarrow X_0$ being submersions.
    By Proposition \ref{fiberprod} and Lemma \ref{simpleD}, $H^1(X)$ can be defined as a fiber product of presheaves that are represented by $\mathbb{Z}_{<0}$-graded vector bundles and the involved morphisms correspond to $\mathbb{Z}_{<0}$-graded vector bundle morphisms. Since the functor $\mathfrak{S}:\catname{\mathbb{Z}_{<0}VB}\rightarrow \catname{\mathbb{N}Mfd}$ which sends a $\mathbb{Z}_{<0}$-graded vector bundle to its associated N-manifold 
    %(and the Yoneda embedding)
    preserves limits (see \cite[Prop.\,2.25]{ZR}), it is enough to prove that $\text{ker}\,{Td_1}_{|X_0}[1]$ is the fiber product of the following diagram in the category of $\mathbb{Z}_{<0}$-graded vector bundles 
    \begin{center}
% https://tikzcd.yichuanshen.de/#N4Igdg9gJgpgziAXAbVABwnAlgFyxMJZABgBoBGAXVJADcBDAGwFcYkQANAfWJAF9S6TLnyEUAJlLFqdJq3YAVbuWRV+gkBmx4CRSVRoMWbRJy7kAOhbwBbeAAIlPVZXVDtoomWmG5JkFY4MAAeOMAA1jAATnxWggpQ5nxcwAA+3MR8LvwyMFAA5vBEoABmURA2SOQ0OBBIkrLG7MSBWHZwjokqagKl5ZWIZCC19b5NpnA8re32vDSM9ABGMIwACsI6YiBRWPkAFjhuIGUVSADMNXWD81hg-lD0cHt5RycDF8NX1SCMt-ePzygIBq9CwjHYewgEHCOT4QA
\begin{tikzcd}
{\text{ker}\,{Td_1}_{|X_0}[1]} \arrow[d, dashed] \arrow[rr, dashed, hook] &  & {TX_1[1]} \arrow[d, "{0\times Td_1[1]}"] \\
X_0 \arrow[rr, "s_0\times 0"']                                            &  & {X_1\times TX_0[1]}                     
\end{tikzcd}
    \end{center}
    where $X_0,\:X_1$ denote the trivial graded vector bundle over $X_0,\:X_1$, respectively. To show that ${\text{ker}\,{Td_1}_{|X_0}[1]}$ satisfies the universal property of the fiber product, %suppose that there exist
    consider
    a graded vector bundle $E_{\bu}\rightarrow M$ and graded vector bundle morphisms $(F,f):E_{\bu}\rightarrow X_0$, $(G,g):E_{\bu}\rightarrow TX_1[1]$. Then $(s_0\times 0)\circ F=(0\times Td_1[1])\circ G$ if and only if $G$ maps to $\text{ker}\,Td_1[1]\subseteq TX_1[1]$ and $s_0f=g$, which in turn is equivalent to the fact that $G$ factors (uniquely) through the pullback $(s_0)^{\ast}(\text{ker}\,Td_1[1])={\text{ker}\,Td_1}_{|X_0}[1]$. So ${\text{ker}\,Td_1}_{|X_0}[1]$ is indeed the fiber product. \\
    Under the weaker the assumption that horn projections are submersions (instead of surjective submersions), the rest of the inductive proof of Theorem \ref{yoooo} in \cite{ZR} can be straightforwardly extended, with the exception of Lemma 3.16 in \cite{ZR}. For the case that horn projections are submersions, Lemma 3.16 is proven in Lemma \ref{proofanhang} in the appendix.
   % We emphasize that given compatible connections on $T^{\ast}X_k,T^{\ast}\bigwedge^k_0(X),\:2\leq %k\leq n$, the fiber products $\bigoplus_{i=1}^k\,\text{ker}\,{Tp^i_0}_{|X_0}[i]$ are defined in the %same way as in the proof of Lemma 3.18. in \cite{ZR}. This induces specific natural isomorphisms %$H^k(X)\cong \text{Yon}(\bigoplus_{i=1}^k\,\text{ker}\,{Tp^i_0}_{|X_0}[i])$.
    \end{proof}
    
    \begin{rema}
    \label{uniqueway}
    Given a Kan $n$-\s manifold $X_{\bu}$ and compatible connections like in Lemma \ref{weakercase}, we from now on always work with the specific fiber products $\bigoplus_{i=1}^k\,\text{ker}\,{Tp^i_0}_{|X_0}[i]$ and natural isomorphisms $H^k(X)\cong \text{Yon}(\bigoplus_{i=1}^k\,\text{ker}\,{Tp^i_0}_{|X_0}[i])$ constructed in the proof of Lemma 3.18 in \cite{ZR}.
    \end{rema}
    \vspace{1mm}
    \noindent We point out the following immediate consequence of Lemma \ref{weakercase}.
    %and Theorem \ref{yummiiiiiiiiii}.
    
    \begin{coro}
    Let $X_{\bu}$ be a \s manifold and let $U_{\bu}$ be a Kan $n$-\s manifold induced by $X_{\bu}$. Then $H^l(U)$ is representable for $0\leq l\leq n$. Given a choice of compatible connections on $T^{\ast}U_k$, $T^{\ast}\bigwedge^k_0(U)$ for $2\leq k\leq n $, one can construct natural isomorphisms $H^k(U)\cong\text{Yon}(\bigoplus_{i=1}^k\,\text{ker}\,{Tp^i_0}_{|X_0}[i])$.
    \end{coro}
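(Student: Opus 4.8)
The plan is to deduce the corollary directly from Lemma \ref{weakercase} by observing that a Kan $n$-simplicial manifold $U_{\bu}$ induced by $X_{\bu}$ is in particular a Kan $n$-simplicial manifold in the sense required by that lemma, so all the conclusions transfer verbatim. First I would note that, by definition of an induced $n$-simplicial manifold, $U_0 = X_0$, and each $U_m$ is an open neighborhood of $X_0 \subseteq X_m$ with structure maps restricted from those of $X_{\bu}$; being a Kan $n$-simplicial manifold, it satisfies $\text{Kan}(m,i)$ in $(\catname{Mfd},\mathscr{T}_{\text{subm.}})$ for $1\leq m\leq n$, $0\leq i\leq m$, i.e. the horn projections $p^m_i:U_m\to\bigwedge^m_i(U)$ are submersions. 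This is exactly the hypothesis of Lemma \ref{weakercase}, so $H^l(U)$ is representable for $0\leq l\leq n$.

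Next I would address the only genuine bookkeeping point, namely identifying the representing bundle. Lemma \ref{weakercase} gives natural isomorphisms $H^k(U)\cong\text{Yon}(\bigoplus_{i=1}^k\,\text{ker}\,{Tp^i_0}_{|U_0}[i])$; since $U_0 = X_0$, the base over which these kernels live is $X_0$. It remains to check that $\text{ker}\,{Tp^i_0}_{|X_0}$ computed inside $TU_i$ agrees with the corresponding object built from $X_{\bu}$: because $U_i$ is open in $X_i$ one has $T_{x_0}U_i = T_{x_0}X_i$ for $x_0\in X_0$, and the face maps $d^U_j$ are restrictions of $d^X_j$, so $Td^U_j{}_{|X_0} = Td^X_j{}_{|X_0}$; hence $\text{ker}\,{Tp^i_0}_{|X_0}$ is the same subbundle of ${TX_i}_{|X_0}$ whether formed from $U_{\bu}$ or directly — this is precisely the tangent complex object $\tilde{\mathfrak{T}}_i(X)$ of Remark \ref{twoversions} (in the $d_0$-version), which is why we may write the representing bundle as $\bigoplus_{i=1}^k\,\text{ker}\,{Tp^i_0}_{|X_0}[i]$. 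Finally I would observe that a choice of compatible connections on $T^{\ast}U_k$, $T^{\ast}\bigwedge^k_0(U)$ with respect to $p^k_0$ for $2\leq k\leq n$ exists (connections always exist and can be made compatible, cf. the references to section B.2), which furnishes the required isomorphisms, completing the proof.

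I do not expect a serious obstacle here: the corollary is essentially a restatement of Lemma \ref{weakercase} specialized to induced Kan $n$-simplicial manifolds, and the content is the trivial identification $U_0 = X_0$ together with openness of $U_i$ in $X_i$. The only mildly delicate point is the one flagged above — making sure the reader understands why the representing object is written with $X_0$ and ${TX_i}_{|X_0}$ rather than with $U_0$ and ${TU_i}_{|X_0}$ — but this is immediate from the definitions of "induced" and of the subspace topology/tangent space of an open submanifold, so a sentence suffices. A short proof of the form "Apply Lemma \ref{weakercase} to $U_{\bu}$, noting $U_0 = X_0$ and that $U_i$ open in $X_i$ gives $\text{ker}\,{Tp^i_0}_{|X_0}\subseteq{TX_i}_{|X_0}$" is all that is needed.
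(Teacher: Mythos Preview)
Your proposal is correct and matches the paper's own treatment: the corollary is stated there as an ``immediate consequence of Lemma \ref{weakercase}'' with no separate proof, and your argument is precisely the intended one-line application of that lemma using $U_0=X_0$ and openness of $U_i\subseteq X_i$. Your additional bookkeeping about $T_{x_0}U_i=T_{x_0}X_i$ and the identification with the tangent complex is accurate but more than the paper deems necessary; note also that existence of compatible connections need not be argued, since the statement assumes them as given.
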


   \noindent \textbf{(Step 2)}\:\: Denoting $j:U_{\bu}\rightarrow X_{\bu}$ to be the inclusion, in a series of consecutive lemmas, we now will prove that $H^l(j):H^l(U)\rightarrow H^l(X)$ is a natural isomorphism. Initially, we will prove this for a special case in the next lemma.\\
    
       \noindent \textbf{(Step 2a)}
       \begin{lemm}
       \label{UX}
    Let $X_{\bu}$ be a \s manifold, $U_{\bu}$ be a Kan $n$-\s mani\-fold induced by $X_{\bu}$ and let $j:U_{\bu}\rightarrow X_{\bu}$ denote the inclusion. Suppose that there exist compatible connections on $T^{\ast}X_k,\:T^{\ast}U_k,T^{\ast}\bigwedge^k_0(U)$ w.r.t. $Tj_k:TU_k\hookrightarrow TX_k$ and $Tp^k_0:TU_k\rightarrow T\bigwedge^k_0(U)$ for $2\leq k\leq n$. Then $H^l(j):H^l(U)\rightarrow H^l(X)$ is a natural isomorphism for $0\leq l\leq n$.
    \end{lemm}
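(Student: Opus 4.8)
The plan is to prove the statement by induction on $l$, running the induction in parallel with the identifications from Lemma \ref{weakercase}. For $l=0$ the map $H^0(j)$ is the identity on $\text{Hom}(\star,X_0)$ since $U_0=X_0$, so there is nothing to show. For $l=1$, both $H^1(U)$ and $H^1(X)$ are canonically represented by $\text{ker}\,{Td_1}_{|X_0}[1]$ (the kernel is taken along the base space $X_0$, where $U_1$ and $X_1$ agree to first order), and under these identifications $H^1(j)$ is the identity. This uses that $U_1$ is an open neighborhood of $X_0$ in $X_1$, so the inclusion induces an isomorphism on tangent spaces along $X_0$.

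For the inductive step, I would use the fiber product description of Proposition \ref{fiberprod} for both $H^k(U)$ and $H^k(X)$, together with Remark \ref{pullback}, which says that $H^k(j)$ is the unique natural transformation compatible with the forgetful maps $F_k$ and the projections $p_{k,k-1}$. The key point is that the hypothesis guarantees compatible connections on all the relevant cotangent bundles both for $U_{\bu}$ and for $X_{\bu}$, and moreover compatibly with the inclusion $Tj_k:TU_k\hookrightarrow TX_k$; this is exactly what is needed to run the argument of \cite[sec.\,3.2]{ZR} (in the strengthened form of Lemma \ref{weakercase}) simultaneously for $U$, for $X$, and for the morphism between them. Concretely, the fiber product diagram for $H^k(X)$ involves $\text{Hom}(\star\times D^k,X_k)$ and $\text{Hom}(\star\times D^{k-1},X_k)$, $\text{Hom}(\star\times D^k,X_{k-1})$; the corresponding diagram for $H^k(U)$ involves the same data with $U$ in place of $X$. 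Since passing to $\mathbb{Z}_{<0}$-graded vector bundles and then to N-manifolds (via $\mathfrak{S}$, which preserves limits by \cite[Prop.\,2.25]{ZR}) turns both diagrams into limit diagrams of graded vector bundles, it suffices to check that the map of diagrams induced by $j$ is a level-wise isomorphism of graded vector bundles, hence induces an isomorphism on limits.

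The crux is therefore the level-wise statement: at the top level one must show that $Tj_k:\text{ker}\,{Tp^k_0}_{|X_0}[k]_{U}\to \text{ker}\,{Tp^k_0}_{|X_0}[k]_{X}$ is an isomorphism (which, again, holds because $U_k$ is an open neighborhood of $X_0$ inside $X_k$, so $TU_k$ and $TX_k$ coincide along $X_0$, and the horn projections agree there too), and at the remaining levels one has the inductive isomorphism $H^{k-1}(j)$. The chosen compatible connections ensure that the non-canonical splittings used in \cite[Lem.\,3.18]{ZR} to build the fiber products $\bigoplus_{i=1}^k \text{ker}\,{Tp^i_0}_{|X_0}[i]$ can be taken to intertwine the $U$- and $X$-constructions, so that the map $H^k(j)$ literally becomes the identity on $\text{Yon}(\bigoplus_{i=1}^k \text{ker}\,{Tp^i_0}_{|X_0}[i])$ under the fixed identifications from Lemma \ref{weakercase}.

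\textbf{Main obstacle.} The hard part is bookkeeping: showing that the natural isomorphisms of Lemma \ref{weakercase}, which are non-canonical and depend on the choice of connections, can be chosen coherently for $U_{\bu}$ and $X_{\bu}$ so that $H^k(j)$ becomes the identity — this is precisely why the extra hypothesis on \emph{compatible} connections w.r.t. $Tj_k$ is imposed. Once that compatibility is fed into the construction of \cite[Lem.\,3.18]{ZR}, the rest is a diagram chase using that $\mathfrak{S}$ preserves limits and that tangent data along $X_0$ is insensitive to shrinking $X_k$ to the open neighborhood $U_k$. I expect this lemma to be the technical heart, with Step 2b and Step 2c then handling the passage from this special (connection-compatible) case to the general inclusion by a further shrinking argument based on Lemma \ref{blado}.
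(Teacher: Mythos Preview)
Your overall plan is the right one --- induction on $l$ using the fiber-product description of Proposition~\ref{fiberprod} --- and your base cases $l=0,1$ are fine. But the inductive step as you describe it has a genuine gap.

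You write that ``it suffices to check that the map of diagrams induced by $j$ is a level-wise isomorphism of graded vector bundles.'' This is not true and cannot be made true. The cospan defining $H^k(X)$ has corners $H^{k-1}(X)$, $\text{Hom}(\star\times D^k,X_k)\cong T[1]^kX_k$, and a product of similar Hom-presheaves; the corresponding cospan for $U$ has corners $H^{k-1}(U)$, $T[1]^kU_k$, etc. The map $T[1]^kj_k:T[1]^kU_k\to T[1]^kX_k$ is \emph{not} an isomorphism: its body is the open inclusion $U_k\hookrightarrow X_k$, and $U_k\subsetneq X_k$ in general. So the two cospans are not isomorphic, and you cannot simply conclude that their limits agree. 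Your ``crux'' statement about $Tj_k$ on $\text{ker}\,{Tp^k_0}_{|X_0}[k]$ is a statement about a direct summand of the \emph{limit}, not about a corner of the cospan; it does not by itself tell you that $\bigoplus_{i=1}^k\text{ker}\,{Tp^i_0}_{|X_0}[i]$ is the pullback of the $X$-cospan.

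What the paper actually does is prove directly that $(\bigoplus_{i=1}^{l}\text{ker}\,{Tp^i_0}_{|X_0}[i],\,p^U_{l,l-1},\,T[1]^lj_l\circ F^U_l)$ satisfies the universal property of the pullback for the $X$-cospan. Given a test N-manifold $\mathcal{T}$ with maps $\alpha$ into $\bigoplus_{i=1}^{l-1}\text{ker}\,{Tp^i_0}_{|X_0}[i]$ and $\beta$ into $T[1]^lX_l$ fitting the cospan, one looks at the \emph{underlying body map} $\underline{\beta}:\mathcal{T}_{\text{body}}\to X_l$. The degeneracy constraints in the cospan force $\underline{\beta}=j_l\circ s_i^U\circ\underline{F_{l-1}}\circ\underline{\alpha}$, so $\underline{\beta}$ actually lands in $U_l\subseteq X_l$. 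Hence $\beta$ factors through $T[1]^lU_l$, and one is reduced to the universal property of the $U$-pullback, which holds by Lemma~\ref{weakercase}. This body-map argument is the missing ingredient in your proposal; the compatible connections are then used to identify the two resulting representations and to check that the composite isomorphism really is $H^l(j)$.
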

    
     \begin{proof}
    We first show that $H^1(X)$ is representable (without needing to choose any connections) and that $H^1(j):H^1(U)\rightarrow H^1(X)$ is a natural isomorphism. Similarly to the proof of Lemma \ref{weakercase}, the diagram
    \begin{center}
% https://tikzcd.yichuanshen.de/#N4Igdg9gJgpgziAXAbVABwnAlgFyxMJZABgBpiBdUkANwEMAbAVxiRGAB0OcYAPHYAGsYAJwC+XUgBU0APQCMAfWJjFwAD4ANZWOTyKIMaXSZc+QijL7q9Zq0QhtxQ8ZAZseAkQDM5KrUYWNilteT0DIxMPcx9SawC7NlCuPABbeAACEOVwlyizLxQAJj8bQPsQKQBVRTD9Q38YKABzeCJQADMRCFSkMhAcCCR5SJAunqQSgaHEXxAGOgAjGAYABVNPC3mYDpwQMsSHYhSsdLgsqFrc0fHexHlqQaQ5heW1jZiHESxmgAs9g5BBxwZQnM4ZZw3bp3AAsjxmU1sQMqACsrvVHnQsAw2L8IBBBHkxtC+vCkHCBlicQ48QSGmIgA
\begin{tikzcd}
{{\text{ker}\,Tp^1_0}_{|X_0}[1]} \arrow[d] \arrow[rr, hook] &  & {TU_1[1]} \arrow[r, "{Tj_1[1]}", hook] & {TX_1[1]} \arrow[d, "{0\times Td_1[1]}"] \\
X_0 \arrow[rrr, "s_0\times 0"']                             &  &                                        & {X_1\times TX_0[1]}                     
\end{tikzcd}
    \end{center}
    proves to be a pullback diagram in the category of $\mathbb{Z}_{<0}$-graded vector bundles and corresponds to a pullback diagram in the category of presheaves of N-manifolds.
    %Hence, the corresponding diagram of presheaves of graded manifolds
    %\begin{center}
     %   blub
    %\end{center}
    %is also a pullback diagram. 
    Let $H^1(U)\cong \text{Yon}({\text{ker}\,Tp^1_0}_{|X_0}[1])$ be the unique natural isomorphism making the diagrams 
    \begin{center}
% https://tikzcd.yichuanshen.de/#N4Igdg9gJgpgziAXAbVABwnAlgFyxMJZABgBpiBdUkANwEMAbAVxiRAE0CAKYAHV5wwAHjmABrGACcAvv1IAVNAD0AjAH1i0tcAA+ADQ3TkKigEoQ00uky58hFGRPV6zVog7cDxc5evY8BEQq5FS0jCxsABKqXACqPlYgGP52QaROYa5RSsRxCX62gSgATCHO4W4eYDz8giLiUrK8Csrqmtr6hsZmFonJhfbIAMxlmRHu0Sp5vQUBg6UZLuNVXPKxaird+Uk2c0QjixVREAC2XPxwOHSStVgn8AAEACKk6yo+oTBQAObwRKAAM0kpyQZBAOAgSBUvhAQJBiDBEKQpTGlX4AGMCN8ZrDgSdkdQkYgRqi2Mp1sBgpocXD8YhguDIcTyll3BisSBqAw6AAjGAMAAKu1S7kkWG+AAscDS8UgACyEpkAVhZy3ZYGxMNpSBVjKQAHZVZUAGJKN4y+EKvWIABsWtltsVBqNbHV2K5vP5QpSRRAYsl0ukFGkQA
\begin{tikzcd}
{\text{Yon}({\text{ker}\,Tp^1_0}_{|X_0}[1])} \arrow[d] \arrow[r, "\cong"] & H^1(U) \arrow[d, "{p^U_{1,0}}"] & {\text{Yon}({\text{ker}\,Tp^1_0}_{|X_0}[1])} \arrow[r, "\cong"] \arrow[d] & H^1(U) \arrow[d, "F^U_1"] \\
\text{Yon}(X_0) \arrow[r, "\cong"']                                       & H^0(U)                          & {\text{Yon}(TU_1[1])} \arrow[r, "\cong"']                                 & {Hom(\star\times D,U_1)} 
\end{tikzcd}
    \end{center}
    commute. Analogously define the natural isomorphism $H^1(X)\cong \text{Yon}({\text{ker}\,Tp^1_0}_{|X_0}[1])$. Due to the fact that $H^1(j)$ is the unique natural transformation $H^1(U)\rightarrow H^1(X)$ satisfying $p^X_{1,0}\circ H^1(j)=p^U_{1,0}$, $j_{\ast}\circ F^U_1=F^X_1\circ H^1(j)$\footnote{see Remark \ref{pullback}}, it can be easily verified via diagram chasing that the isomorphism $H^1(U)\xrightarrow[]{\cong}\text{Yon}({\text{ker}\,Tp^1_0}_{|X_0}[1])\xrightarrow[]{\cong}H^1(X)$ equals $H^1(j):H^1(U)\rightarrow H^1(X)$.\\
    For a \s manifold $X_{\bu}$ and an induced Kan $n$-\s manifold $U_{\bu}$ as in the hypothesis we now prove the following statement inductively.\\
    \underline{Claim}: For $1\leq l\leq n$ the presheaf $H^l(X)$ can be represented by $\bigoplus_{i=1}^l\,\text{ker}\,{Tp^i_0}_{|X_0}[i]$. Let 
    \begin{center}
% https://tikzcd.yichuanshen.de/#N4Igdg9gJgpgziAXAbVABwnAlgFyxMJZABgBpiBdUkANwEMAbAVxiRAB12AjLAcwjTM4AfWBYAvAEYAvgD0GnUpxwwAHjmABrGACdpi4ABU0srMOLTRAHwAa56ciwUQ00uky58hFGUlVajCxsnDz8gkwiYlJywAwAtDKKymoa2noGxqb21nYWjs6u7th4BEQAzKR+1PTMrIggwAAUhsh+srEJ0gCqwgwAlO3I8X767HgAtvDNrRTyPR0yA8BDFJxwMDjjWGARFtIubiAYxV7l5P41QfUtbQw9DC7+MFC88ESgAGY6EONIZCA4CBICogBh0LgwBgABQ8JW8oJgHxwIGqgTqIAAYrJ7ijQdt0XAIAwsFADp9vr9EJJqICkAAmQogL4-P40oFU6jEsAEokk3FgiHQ2Gneo6PgAC2RqNqbBM8wYpGG+0ZzMpINpiAZFGkQA
\begin{tikzcd}
{\bigoplus_{i=1}^l\,\text{ker}\,{Tp^i_0}_{|X_0}[i]} \arrow[rrr, "F^U_l"] \arrow[d, "{p^U_{l,l-1}}"'] &  &  & {T[1]^lU_l} \arrow[d]                                            \\
{\bigoplus_{i=1}^{l-1}\,\text{ker}\,{Tp^i_0}_{|X_0}[i]} \arrow[rrr]                                  &  &  & {{(T[1]^{l-1}U_l)^{[l-1]}\times(T[1]^lU_{l-1})^{[l]\setminus0}}}
\end{tikzcd}

    \end{center}
    be the diagram of N-manifolds which corresponds\footnote{by Lemma \ref{weakercase} and Corollary \ref{rep2}\label{refnote}} to the pullback diagram of presheaves defining $H^l(U)$. Then 
    \begin{center}
% https://tikzcd.yichuanshen.de/#N4Igdg9gJgpgziAXAbVABwnAlgFyxMJZABgBoBGAXVJADcBDAGwFcYkQAdDgIywHMIaFnAD6wLAF5yAXwB6wRgFoZXUlxwwAHjmABrGACdpq4ABU0srCOLSxAHwAa16ciyUQ00uky58hFADMFNR0TKzswAAUpshU8koyTowAlPLICZTGHHgAtvDRsZSyjE4KytKpwOmUXHAwODlYYMxwNtIeXiAY2HgEREHEIQwsbIggMXElIowd3j1+RGSDNMPhY1y8AkItYpIyxarqWjr6Ribmls72Tjau7tIhMFB88ESgAGYGEDlIZCA4ECQMk6n2+SAATDQAUDPB8vj9EEF-oDEH9VqMurJSoxSAlpBILABVMQ4vEgGiMejcGCMAAKPl6-hABn4AAscLMQKCEUjoYhIaERuwAGJY6YSCZFRgAK2mXAAxlgDPKAASi4kzB7SIA
\begin{tikzcd}
{\bigoplus_{i=1}^l\,\text{ker}\,{Tp^i_0}_{|X_0}[i]} \arrow[d, "{p^U_{l,l-1}}"'] \arrow[rrr, "{T[1]^lj_l\circ F^U_l}"] &  &  & {T[1]^lX_l} \arrow[d]                                            \\
{\bigoplus_{i=1}^{l-1}\,\text{ker}\,{Tp^i_0}_{|X_0}[i]} \arrow[rrr]                                                                     &  &  & {{(T[1]^{l-1}X_l)^{[l-1]}\times(T[1]^lX_{l-1})^{[l]\setminus0}}}
\end{tikzcd}
    \end{center}
    is a pullback diagram in the category of N-manifolds and corresponds to the pullback diagram of presheaves defining $H^l(X)$.
    %$\bigoplus_{i=1}^l\,\text{ker}\,{Tp^i_0}_{|X_0}[i]$ is also a limit of the lower diagram with %natural morphisms $p^X_{l,l-1}=p^U_{l,l-1}$ and  $F^X_l=T[1]^lj_l\circ F^U_l$.
    With the isomorphisms $H^l(U)\cong \text{Yon}(\bigoplus_{i=1}^l\,\text{ker}\,{Tp^i_0}_{|X_0}[i])$\footnote{Please pay attention to Remark \ref{uniqueway}.}, $H^l(X)\cong \text{Yon}(\bigoplus_{i=1}^l\,\text{ker}\,{Tp^i_0}_{|X_0}[i])$ defined in a similar way to the above isomorphisms $H^1(U)\cong \text{Yon}({\text{ker}\,Tp^1_0}_{|X_0}[1])$, $H^1(X)\cong \text{Yon}({\text{ker}\,Tp^1_0}_{|X_0}[1])$, the composition $H^l(U)\xrightarrow[]{\cong}\text{Yon}(\bigoplus_{i=1}^l\,\text{ker}\,{Tp^i_0}_{|X_0}[i])\xrightarrow[]{\cong}H^l(X)$ is equal to $H^l(j):H^l(U)\rightarrow H^l(X)$.\\
    \underline{Proof of the claim}: We already proved this for $l=1$. Assuming that the statement is true until $l-1$, the following diagram of N-manifolds commutes
    %mit Beschriftung der Pfeile
    % https://tikzcd.yichuanshen.de/#N4Igdg9gJgpgziAXAbVABwnAlgFyxMJZABgBpiBdUkANwEMAbAVxiRAB12AjLAcwjTM4AfWBYAvAEYAvgD1gDaZ1KccMAB45gAaxgAnJe1LAAKmllZhxaaIA+ADSvTkWCiGnGQGbHgJEyklS0jCxsnDz8gkwiYlJyCgC0MsqqGlq6Bsqm5pbWdo7WLm4e6Ji4+IQokqSB1PTMrIggJsiB8tpJ0o46ne6e3uV+VaQATEH1oU0tbT0yAKqiHTJ9pT4VRCPk4yGNza0UsgwLDCteZb6VyADMW3U7bNMH2o7apwMXRDe1wQ1sABSPeQMTqOBgASnkyGBgUMeAAtvAAftDt1odIIcAoRROHAYDg4VgwNFrG9zusUDcxndfk0kTM0ccMVCkhRYVgEXA6QcjqI0UyGNj2Lj8YTidJ3EEYFBePAiKAAGZ6CBwpBkEA4CBIao-SZeWQLBSkNEgagMOhcGAMAAKZKGIAYMHlOFOiuVWuoGqQN3t5stNrWdr0fAAFs7qbqAGK8zr6l1KlWIbWexCbHW7CPtTrCexxt2Ib3J1M4OhYE5NYMQCCvU2+622yogIO8UMmtMPZGJGQAK2jyxKIFdCYALB7NYgAKwektlkAVqutia7QEMHsnfuD1WjpAjttNDPHXMJotjgBs4d2fxEAElZPYwaJOHQ4Dg2Ry-lBhAApWQAEQxj+fcV13jJBJ3VU9z3+a9f3-dgnxfVR2XgAACd8v1ve9gAAl9Dy9LdEAAdkg2loLmTDsNfREP2-P95Ao3DCPws91WnNg51eYC8x3ZMiJ9C06wDBsHSdVs4GDLARMQaxWSAA
    \begin{center}
% https://tikzcd.yichuanshen.de/#N4Igdg9gJgpgziAXAbVABwnAlgFyxMJZABgBpiBdUkANwEMAbAVxiRAB12AjLAcwjTM4AfWBYAvAEYAvgD1gDaZ1KccMAB45gAaxgAnJe1LAAKmllZhxaaIA+ADSvTkWCiGnGQGbHgJEyklS0jCxsnDz8gkwiYlJyCgC0MsqqGlq6Bsqm5pbWdo7WLm4e6Ji4+IQokqSB1PTMrIggJsiB8tpJ0o46ne6e3uV+VaQATEH1oU0tbT0yAKqiHTJ9pT4VRCPk4yGNza0UsgwLDCteZb6VyADMW3U7bNMH2o7apwMXRDe1wQ1sABSPeQMTqOBgASnkyGBgUMeAAtvAAftDt1odIIcAoRROHAYDg4VgwNFrG9zusUDcxndfk0kTM0ccMVCkhRYVgEXA6QcjqI0UyGNj2Lj8YTidJ3EEYFBePAiKAAGZ6CBwpBkEA4CBIao-SZeWQLBSkNEgagMOhcGAMAAKZKGIAYMHlOFOiuVWuoGqQN3t5stNrWdr0fAAFs7qbqAGK8zr6l1KlWIbWexCbHW7CPtTrCexxt2Ib3J1M4OhYE5NYMQCCvU2+622yogIO8UMmtMPZGJGQAK2jyxKIFdCYALB7NYgAKwektlkAVqutia7QEMHsnfuD1WjpAjttNDPHXMJotjgBs4d2fxEAElZPYwaJOHQ4Dg2Ry-lBhAApWQAEQxj+fcV13jJBJ3VU9z3+a9f3-dgnxfVR2XgAACd8v1ve9gAAl9Dy9LdEAAdkg2loLmTDsNfREP2-P95Ao3DCPws91WnNg51eYC8x3ZMiJ9C06wDBsHSdVs4GDLARMQaxWSAA
\begin{tikzcd}
{\bigoplus_{i=1}^{l}\,\text{ker}\,{Tp^i_0}_{|X_0}[i]} \arrow[d, "{p^U_{l,l-1}}", dashed] \arrow[rr, "F^U_l", dashed]    &                                                                                                              & {T[1]^lU_l} \arrow[r, "{T[1]^lj_l}", hook] \arrow[rdd, dashed] & {T[1]^lX_l} \arrow[d, "(s_I^D)^{\ast}\times (d_J^X)_{\ast}"]                   \\
{\bigoplus_{i=1}^{l-1}\,\text{ker}\,{Tp^i_0}_{|X_0}[i]} \arrow[rd, "F_{l-1}^U"', dashed] \arrow[r, "F^{l-1}_X"] & {T[1]^{l-1}X_{l-1}} \arrow[rr, "(s_I^X)_{\ast}\times(d_J^D)^{\ast}"]                                         &                                                        & {(T[1]^{l-1}X_l)^{\times[l-1]}\times(T[1]^lX_{l-1})^{\times[l]\setminus0}}                 \\
                                                                                                        & {T[1]^{l-1}U_{l-1}} \arrow[u, "{T[1]^{l-1}j_{l-1}}"', hook] \arrow[rr, "(s_I^U)_{\ast}\times(d_J^D)^{\ast}", dashed] &                                                        & {(T[1]^{l-1}U_l)^{\times[l-1]}\times(T[1]^lU_{l-1})^{\times[l]\setminus0}} \arrow[u, hook]
\end{tikzcd}
    \end{center}
    where $I,J$ are the tuples $I:=(0,\dots,l-1)$, $J:=(1,\dots,l)$ and products as $s_0^X\times \dots  \times s_{l-1}^X$, $d_1^X\times \dots \times  d_l^X$ are abbreviatedly denoted by $s_I^X$, $d_J^X$. Note that the dashed arrows form a pullback diagram by Lemma \ref{weakercase}.\\
    To show that the upper rectangle is a pullback diagram, i.e. $(\bigoplus_{i=1}^{l}\,\text{ker}\,{Tp^i_0}_{|X_0}[i],p^U_{l,l-1},T[1]^lj_l\circ F^U_l)$ satisfies the universal property of the fiber product, suppose that there exist an N-manifold $\mathcal{T}$ and maps  $\alpha:\mathcal{T}\rightarrow \bigoplus_{i=1}^{l-1}\,\text{ker}\,{Tp^i_0}_{|X_0}[i]$, $\beta:\mathcal{T}\rightarrow T[1]^lX_l$ such that $((s_I^X)_{\ast}\times (d_J^D)^{\ast})\circ F_{l-1}^X\circ \alpha=((s_I^D)^{\ast}\times (d_J^X)_{\ast})\circ \beta$.
    By induction hypothesis, one has $((s_I^X)_{\ast}\times (d_J^D)^{\ast})\circ F_{l-1}^X\circ \alpha=((T[1]^{l-1}j_l)^{[l-1]}\times (T[1]^lj_{l-1})^{[l]\setminus0})\circ ((s_I^U)_{\ast}\times (d_J^D)^{\ast})\circ F_{l-1}^U\circ \alpha$. Considering the morphisms of N-manifolds as sheaf morphisms (according to Def. \ref{Nmanmorsheaf}), it follows for the underlying smooth maps that
    \begin{gather*}
        \underline{T[1]^{l-1}j_l}\circ \underline{(s_i^U)_{\ast}}\circ \underline{F_{l-1}}\circ \underline{\alpha}=\underline{(s_i^X)_{\ast}}\circ \underline{F_{l-1}}\circ \underline{\alpha}=\underline{(s_i^D)^{\ast}}\circ \underline{\beta}
    \end{gather*}
    for $i\in \{0,\dots,l-1\}$. Since $\underline{T[1]^{l-1}j_l}=j_l, \underline{(s_i^U)_{\ast}}=s_i^U, \underline{(s_i^D)^{\ast}}=id_{X_l}$, the underlying map of $\beta$ is given by $\underline{\beta}=j_l\circ s_i^U\circ \underline{F_{l-1}}\circ \underline{\alpha}$ and in particular maps to $U_l$. Thus the codomain of $\beta$ can be restricted to $T[1]^lU_l$ and $\beta$ factors through a morphism $\tilde{\beta}:\mathcal{T}\rightarrow T[1]^lU_l$, i.e. $\beta=T[1]^lj_l\circ \tilde{\beta}$.\\
    Because 
    %$((s_I^X)_{\ast}\times (d_J^D)^{\ast})\circ F_{l-1}^X\circ \alpha=((T[1]^{l-1}j_l)^{[l-1]}\times %(T[1]^lj_{l-1})^{[l]\setminus0})\circ ((s_I^U)_{\ast}\times (d_J^D)^{\ast})\circ F_{l-1}^U\circ %\alpha$ and 
    $(T[1]^{l-1}j_l)^{[l-1]}\times (T[1]^lj_{l-1})^{[l]\setminus0}$ is a monomorphism
    and
    \begin{gather*}
        ((T[1]^{l-1}j_l)^{[l-1]}\times (T[1]^lj_{l-1})^{[l]\setminus0})\circ ((s_I^D)^{\ast}\times (d_J^U)_{\ast})\circ \Tilde{\beta}=((s_I^D)^{\ast}\times(d_J^X)_{\ast})\circ T[1]^lj_l\circ \tilde{\beta}\\=((s_I^D)^{\ast}\times (d_J^X)_{\ast})\circ \beta=
        ((T[1]^{l-1}j_l)^{[l-1]}\times (T[1]^lj_{l-1})^{[l]\setminus0})\circ ((s_I^U)_{\ast}\times (d_J^D)^{\ast})\circ F_{l-1}^U\circ \alpha,
    \end{gather*}
    it follows that $((s_I^U)_{\ast}\times (d_J^D)^{\ast})\circ F_{l-1}^U\circ \alpha=((s_I^D)^{\ast}\times (d_J^U)_{\ast})\circ \tilde{\beta}$.
    %$\beta$ maps to $T[1]^lU_l$, i.e. $\beta=T[1]^lj_l\circ \tilde{\beta}$ for a unique map %$\tilde{\beta}:\mathcal{T}\rightarrow T[1]^lU_l$, and $((s_I^U)_{\ast}\times (d_J^D)^{\ast})\circ %F_{l-1}^U\circ \alpha=((s_I^D)^{\ast}\times (d_J^U)_{\ast})\circ \tilde{\beta}$.
    Due to the universal property of the fiber product $(\bigoplus_{i=1}^{l}\,\text{ker}\,{Tp^i_0}_{|X_0}[i],p^U_{l,l-1},F^U_l)$, there exists a unique map $\gamma:\mathcal{T}\rightarrow \bigoplus_{i=1}^{l}\,\text{ker}\,{Tp^i_0}_{|X_0}[i]$ such that $\alpha=p_{l,l-1}^U\circ \gamma,\:\tilde{\beta}=F_l^U\circ \gamma$, and it is clear that $\gamma$ is also the unique map such that $\alpha=p_{l,l-1}^U\circ \gamma$, $\beta=T[1]^lj_l\circ F_l^U\circ \gamma$.\\
    Similarly like before one can use the fact that $H^l(j)$ is the unique natural transformation $H^l(U)\rightarrow H^l(X)$ satisfying $p^X_{l,l-1}\circ H^l(j)=j_{\ast}\circ p^U_{l,l-1}$, $F^X_l\circ H^l(j)=j_{\ast}\circ F^U_l$ to verify that the isomorphism $H^l(U)\xrightarrow[]{\cong}\text{Yon}(\bigoplus_{i=1}^{l}\,\text{ker}\,{Tp^i_0}_{|X_0}[i])\xrightarrow[]{\cong}H^l(X)$ equals $H^l(j)$.
    \end{proof}
  
 % \noindent Since, by Lemma \ref{lemmconn} and \ref{blado}, the assumptions of Lemma \ref{UX} can always %be satisfied, the following corollary follows immediately.
  
   %     \begin{coro}
   % Let $X_{\bu}$ be a \s manifold. Then the presheaves $H^k(X)$, $k\in \mathbb{N}_0$, are representable.
   % \end{coro}
%\noindent The following lemma is proven similarly to Lemma \ref{UX}.    
       \noindent \textbf{(Step 2b)}
    \begin{lemm}
    \label{UU}
    Let $\tilde{U}_{\bu}\subseteq U_{\bu}$ be Kan $n$-\s manifolds which are induced by a \s manifold $X_{\bu}$.
    %have degeneracy maps until $n\in \mathbb{N}$ and $\tilde{U}_{\bu}$ be an open Kan semi\s %submanifold of $U_{\bu}$ which consists of open neighborhoods of $U_0\subseteq U_n,\:n\in %\mathbb{N}_0$.
    Let $j:\tilde{U}_{\bu}\rightarrow U_{\bu}$ denote the inclusion.
    Then $H^l(j):H^l(\tilde{U})\rightarrow H^l(U)$ is a natural isomorphism for $0\leq l\leq n$.
    \end{lemm}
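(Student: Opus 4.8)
The plan is to reduce the statement to Lemma \ref{UX}: I will show that, because $\tilde U_{\bu}$ sits inside $U_{\bu}$ as an open sub-object at every level, the system of compatible connections required in the hypothesis of Lemma \ref{UX} can be produced for the inclusion $j\colon\tilde U_{\bu}\hookrightarrow U_{\bu}$ simply by restricting connections chosen on $U_{\bu}$.

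First I would collect the elementary geometric facts. Since $\tilde U_{\bu}\subseteq U_{\bu}$ and both $n$-\s manifolds are induced by $X_{\bu}$, we have $\tilde U_0=U_0=X_0$ and, for each $0\le m\le n$, $\tilde U_m$ is an open submanifold of $U_m$ whose face maps are the restrictions of those of $U_{\bu}$; in particular $\tilde U_{\bu}$ is a Kan $n$-\s manifold induced by $U_{\bu}$. A short induction along the recursive construction of horn spaces (as carried out in the proof of Theorem \ref{yummiiiiiiiiii}) then shows that for $2\le k\le n$ the horn space $\bigwedge^k_0(\tilde U)$ is an open submanifold of $\bigwedge^k_0(U)$, that $p^k_0\colon\tilde U_k\to\bigwedge^k_0(\tilde U)$ is the restriction of $p^k_0\colon U_k\to\bigwedge^k_0(U)$, and that these open inclusions are compatible with the inclusions $\tilde U_m\hookrightarrow U_m$ on the lower levels entering the fiber products.

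Next I would produce the connections. As $U_{\bu}$ is a Kan $n$-\s manifold, for each $2\le k\le n$ one may choose compatible connections on $T^{\ast}U_k$ and $T^{\ast}\bigwedge^k_0(U)$ with respect to the submersion $p^k_0$ — this is exactly the input used in Lemma \ref{weakercase}, its existence in the submersion case being guaranteed by Lemma \ref{proofanhang}. Restricting these connections along the open inclusions $\tilde U_k\hookrightarrow U_k$ and $\bigwedge^k_0(\tilde U)\hookrightarrow\bigwedge^k_0(U)$ yields connections on $T^{\ast}\tilde U_k$ and $T^{\ast}\bigwedge^k_0(\tilde U)$. Since restriction to an open submanifold preserves compatibility of a pair of connections with a submersion, and a connection together with its restriction to an open submanifold is trivially compatible with the open inclusion, the connections on $T^{\ast}U_k$, $T^{\ast}\tilde U_k$, $T^{\ast}\bigwedge^k_0(\tilde U)$, $2\le k\le n$, form precisely the compatible system demanded in the hypothesis of Lemma \ref{UX}, with the roles of $X_{\bu}$ and $U_{\bu}$ there now played by $U_{\bu}$ and $\tilde U_{\bu}$.

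Finally I would invoke Lemma \ref{UX}. Its proof only ever refers to the ambient object through its levels $0,\dots,n$ and the face maps among them, so it applies unchanged when that object is the $n$-\s manifold $U_{\bu}$ rather than a full \s manifold. Applied to $j\colon\tilde U_{\bu}\hookrightarrow U_{\bu}$ with the connections constructed above, it gives that $H^l(j)\colon H^l(\tilde U)\to H^l(U)$ is a natural isomorphism for $0\le l\le n$. The only genuinely nonformal points — and hence the main (if modest) obstacle — are the inductive identification of $\bigwedge^k_0(\tilde U)$ as an open submanifold of $\bigwedge^k_0(U)$ carrying the restricted horn projection, and the (routine) observation that compatibility of connections is stable under restriction to open submanifolds; everything else is a direct appeal to Lemma \ref{UX}.
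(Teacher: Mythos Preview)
Your approach is correct and rests on the same key idea as the paper's proof: choose compatible connections on $T^{\ast}U_k,\:T^{\ast}\bigwedge^k_0(U)$ and restrict them to $T^{\ast}\tilde U_k,\:T^{\ast}\bigwedge^k_0(\tilde U)$. The execution differs slightly. The paper does not invoke Lemma~\ref{UX} as a black box; instead it applies Lemma~\ref{weakercase} separately to $\tilde U_{\bu}$ and $U_{\bu}$ (both are Kan $n$-simplicial), obtaining the specific isomorphisms of Remark~\ref{uniqueway}, and then checks directly from the construction in \cite[Lem.~3.18]{ZR} that $T[1]^l j_l\circ F^{\tilde U}_l=F^U_l$ and that the projections $p^{\tilde U}_{l,l-1},\:p^{U}_{l,l-1}$ coincide; the identification of the composite isomorphism with $H^l(j)$ is then the same diagram chase as in Lemma~\ref{UX}. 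Your route --- verifying that the hypotheses of Lemma~\ref{UX} are met (with $U_{\bu}$ playing the ambient role) and that its proof only uses the $n$-simplicial structure of the ambient --- is a legitimate shortcut; it trades the explicit compatibility check against the observation that Lemma~\ref{UX} is insensitive to whether the ambient is a full simplicial manifold.

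Two small corrections. First, the existence of compatible connections for a submersion is Remark~\ref{remaconn}~iii) (equivalently \cite[Cor.~2.33]{ZR}), not Lemma~\ref{proofanhang}. Second, the proof of Lemma~\ref{UX} uses the degeneracies $s_i^X$ of the ambient (they appear in the commuting diagram and in the identification of $\underline{\beta}$), not only face maps; this does not affect your argument since $U_{\bu}$ is an $n$-\s manifold and hence carries degeneracies, but your phrasing should be adjusted.
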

    
     \begin{proof}
     The statement obviously holds for $l=0$. In the case $l=1$ it can be proven in the same way as in the proof of Lemma \ref{UX}. Now 
     choose compatible connections on $T^{\ast}U_k,\:T^{\ast}\bigwedge^k_0(U)$, $2\leq k\leq n$, and restrict them to connections on $T^{\ast}\tilde{U}_k,\:T^{\ast}\bigwedge^k_0(\tilde{U})$.
     By Lemma \ref{weakercase} and Remark \ref{uniqueway} let $H^k(\tilde{U})\cong \text{Yon}(\bigoplus_{i=1}^{k}\,\text{ker}\,{Tp^i_0}_{|X_0}[i])$, $H^k(U)\cong \text{Yon}(\bigoplus_{i=1}^{k}\,\text{ker}\,{Tp^i_0}_{|X_0}[i])$ be the specific natural isomorphisms depending on the respective connections for $\tilde{U}_{\bu},U_{\bu}$. Due to the way the fiber products $\bigoplus_{i=1}^{l}\,\text{ker}\,{Tp^i_0}_{|X_0}[i]$, $0\leq l\leq n$, are defined (on the level of graded manifolds) (see Lemma 3.18. in \cite{ZR}), it can be easily checked that one has $T[1]^lj_l\circ F^{\tilde{U}}=F^U_l$ (and the natural morphisms $p^{\tilde{U}}_{l,l-1},p^U_{l,l-1}:\bigoplus_{i=1}^{l}\,\text{ker}\,{Tp^i_0}_{|X_0}[i])\rightarrow \bigoplus_{i=1}^{l-1}\,\text{ker}\,{Tp^i_0}_{|X_0}[i])$  trivially coincide).
    As in the proof of Lemma \ref{UX} it can be verified that the composition $H^l(\tilde{U})\xrightarrow[]{\cong}$ $\text{Yon}(\bigoplus_{i=1}^{l}\,\text{ker}\,{Tp^i_0}_{|X_0}[i])\xrightarrow[]{\cong}H^l(U)$ is equal to $H^l(j):H^l(\tilde{U})\rightarrow H^l(U)$.
    \end{proof}
       \noindent \textbf{(Step 2c)}\:\: To combine the results of Lemma \ref{UX} and \ref{UU}, we require the following lemma.
       \begin{lemm}
    \label{lemmconn}
    Let $X_{\bu}$ be a \s manifold, $U_{\bu}$ be an $n$-\s manifold induced by $X_{\bu}$ and $\eta_k$ be connections on $T^{\ast}U_k$ for $2\leq k\leq n$. Then there exist a Kan $n$-\s manifold $\tilde{U}_{\bu}\subseteq U_{\bu}$ induced by $X_{\bu}$ and connections $\psi_k$ on $T^{\ast}X_k$, $2\leq k\leq n$, such that ${\psi_k}_{|\tilde{U}_k}={\eta_k}_{|\tilde{U}_k}$.
    \end{lemm}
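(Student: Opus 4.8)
The plan is to produce the shrunk simplicial manifold $\tilde U_\bu$ and the global connections $\psi_k$ simultaneously, by first extending each $\eta_k$ from an open neighbourhood of $X_0$ inside $U_k$ to \emph{some} connection on all of $T^\ast X_k$, and then shrinking $U_\bu$ so that on the shrunk pieces the extension agrees with $\eta_k$. First I would recall that a connection on $T^\ast U_k$ is, locally, a choice of Christoffel symbols, and that the space of connections is an affine space modelled on $\Gamma(T^\ast U_k\otimes \mathrm{End}(T^\ast U_k))$ — in particular connections can be glued via a partition of unity. So one can always extend $\eta_k$, which is defined on the open set $U_k\subseteq X_k$, to a connection $\psi_k$ on $T^\ast X_k$ as follows: pick any auxiliary connection $\nabla^{(0)}_k$ on $T^\ast X_k$ (these exist since $X_k$ is an ordinary manifold, hence paracompact), choose a bump function $\chi_k\in C^\infty(X_k)$ with $\mathrm{supp}(\chi_k)\subseteq U_k$ and $\chi_k\equiv 1$ on a smaller open neighbourhood $W_k$ of $X_0$, and set $\psi_k:=\chi_k\,\eta_k+(1-\chi_k)\,\nabla^{(0)}_k$ using the affine structure. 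Then $\psi_k$ is a genuine connection on $T^\ast X_k$ and $\psi_k|_{W_k}=\eta_k|_{W_k}$.

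Next I would produce $\tilde U_\bu$. The naive choice $\tilde U_k:=W_k$ does not immediately give a sub-$n$-simplicial manifold of $U_\bu$, because the $W_k$ need not be closed under the face maps of $U_\bu$; and one also wants $\tilde U_\bu$ to satisfy the Kan conditions. The fix is to feed the open neighbourhoods $W_k$ of $X_0$ into the machinery already developed in the excerpt. Concretely: first shrink $U_\bu$ to an induced Kan $n$-simplicial manifold $U'_\bu\subseteq U_\bu$ with $U'_k\subseteq W_k$ for $k\le n$. This is exactly Corollary~\ref{blubiblupp} applied inside $U_\bu$ — more precisely, Theorem~\ref{yummiiiiiiiiii} constructs open neighbourhoods of $X_0$ (and one is free, at every inductive step of that construction, to shrink the chosen neighbourhood $U_k$ so that it also lies inside $W_k$, since $W_k$ is itself an open neighbourhood of $X_0$), followed by Lemma~\ref{blado} to pass from the resulting induced semisimplicial manifold to an induced $n$-simplicial manifold, shrinking further as needed to stay inside the $W_k$. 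Set $\tilde U_\bu:=U'_\bu$; it is a Kan $n$-simplicial manifold induced by $X_\bu$ with $\tilde U_k\subseteq W_k\subseteq U_k$ for $0\le k\le n$.

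Finally I would check the agreement of connections. For $2\le k\le n$ we have $\tilde U_k\subseteq W_k$, and by construction $\psi_k|_{W_k}=\eta_k|_{W_k}$, hence $\psi_k|_{\tilde U_k}=\eta_k|_{\tilde U_k}$, which is exactly the required identity. (One should note that restricting a connection on $T^\ast X_k$ to the open submanifold $\tilde U_k$ makes sense and is compatible with restricting $T^\ast X_k$ to $T^\ast \tilde U_k$, since $T^\ast(\tilde U_k)=T^\ast X_k|_{\tilde U_k}$ for an open submanifold; and $\eta_k$, a connection on $T^\ast U_k$, restricts to $\tilde U_k\subseteq U_k$ likewise.) This completes the construction.

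The only genuine subtlety — and the step I expect to need the most care — is making sure the two shrinking processes are compatible: the bump functions $\chi_k$ (equivalently the neighbourhoods $W_k$) are chosen \emph{first}, independently for each level $k$, with no compatibility among levels required, because the connections $\psi_k$ live on separate manifolds $X_k$; the simplicial compatibility is then entirely absorbed into the second shrinking, where Theorem~\ref{yummiiiiiiiiii} and Lemma~\ref{blado} do all the work of producing face- and degeneracy-closed open neighbourhoods of $X_0$ inside the prescribed $W_k$'s. Nothing about the Kan conditions interacts with the choice of $\chi_k$, so the argument cleanly separates into "extend the connections" and "shrink the simplicial manifold", each of which is routine given the results already in the paper.
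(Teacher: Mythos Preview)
Your proof is correct and follows essentially the same approach as the paper's: extend each $\eta_k$ to a connection on $T^\ast X_k$ via a bump function (affine combination with an auxiliary connection), and shrink to an induced Kan $n$-simplicial manifold inside the locus where the extension agrees with $\eta_k$ using Theorem~\ref{yummiiiiiiiiii} and Lemma~\ref{blado}. The paper carries out the two steps in the opposite order and is slightly more explicit about one point you left implicit---namely that $X_0$ is \emph{closed} in $X_k$ (the degeneracy embedding has a smooth left inverse, hence is proper), which is what guarantees the existence of your bump function $\chi_k$ with $\chi_k\equiv 1$ near $X_0$ and $\operatorname{supp}\chi_k\subseteq U_k$.
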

    
     \begin{proof}
    For a \s manifold $X_{\bu}$ the embeddings $X_0\hookrightarrow X_m,\:m\in \mathbb{N}$, are proper because they have smooth left inverses (see \cite[Prop.\,A.53\,d)]{lee}), thus $X_0\subseteq X_m$ is a closed embedded submanifold.  Since $U_m$ is a normal space, there exist open neighborhoods $V_m$ of $X_0$ such that $X_0\subseteq V_m\subseteq \Bar{V_m}\subseteq U_m$. Using the constructions made in the proofs of Theorem \ref{yummiiiiiiiiii} and Lemma \ref{blado}, one can find a Kan $n$-\s manifold $\tilde{U}_{\bu}\subseteq U_{\bu}$ such that $\tilde{U}_k$ is an open subset of $V_k$ for $2\leq k\leq n$. Lastly, applying a partition of unity subordinate to the open cover $\{U_k, X_k\setminus\widebar{\tilde{U_k}}$\} of $X_k$ one can define a connection $\psi^k$ on $T^{\ast}X_k$ satisfying $\psi^k_{|\tilde{U}_k}=\eta^k_{|\tilde{U}_k}$.
    \end{proof}

   \noindent Thus, by Lemma \ref{lemmconn} one can always find an induced Kan $n$-\s manifold such that the assumptions of Lemma \ref{UX} are satisfied. We can now conclude:
    \begin{coro}
    \label{UXgen}
    Let $X_{\bu}$ be a \s manifold and $U_{\bu}$ be an induced Kan $n$-\s manifold. Let $j:U_{\bu}\rightarrow X_{\bu}$ be the inclusion. Then $H^l(j):H^l(U)\rightarrow H^l(X)$ is a natural isomorphism for $0\leq l\leq n$. In particular, the presheaves $H^k(X),\:k\in \mathbb{N}_0$, are representable by $\bigoplus_{i=1}^k\,\text{ker}\,{Tp^i_0}_{|X_0}[i]$.
    \end{coro}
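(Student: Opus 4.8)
The plan is to interpose an auxiliary Kan $n$-\s manifold $\tilde{U}_{\bu}\subseteq U_{\bu}$ between $U_{\bu}$ and $X_{\bu}$ to which \emph{both} Lemma \ref{UX} and Lemma \ref{UU} apply, and then to conclude by a two-out-of-three argument using functoriality of $H^l$.

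First I would choose, for each $2\leq k\leq n$, a pair of compatible connections $\eta_k$ on $T^{\ast}U_k$ and $\theta_k$ on $T^{\ast}\bigwedge^k_0(U)$ with respect to the submersion $Tp^k_0:TU_k\rightarrow T\bigwedge^k_0(U)$; such a pair exists since a connection on the base of a submersion always lifts to a compatible connection on the total space (cf.\ the discussion of compatible connections in the appendix and \cite[sec.\,2.5]{ZR}). Feeding the $\eta_k$ into Lemma \ref{lemmconn} yields a Kan $n$-\s manifold $\tilde{U}_{\bu}\subseteq U_{\bu}$ induced by $X_{\bu}$ together with connections $\psi_k$ on $T^{\ast}X_k$ satisfying ${\psi_k}_{|\tilde{U}_k}={\eta_k}_{|\tilde{U}_k}$. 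I would then verify that the restricted connections ${\eta_k}_{|\tilde U_k}$ on $T^{\ast}\tilde U_k$, ${\theta_k}_{|\bigwedge^k_0(\tilde U)}$ on $T^{\ast}\bigwedge^k_0(\tilde U)$, and the $\psi_k$ on $T^{\ast}X_k$ form compatible connections with respect to both the level-$k$ inclusion $Tj_k:T\tilde U_k\hookrightarrow TX_k$ and the submersion $Tp^k_0:T\tilde U_k\rightarrow T\bigwedge^k_0(\tilde U)$: compatibility with the open embedding $Tj_k$ is exactly the equality ${\psi_k}_{|\tilde U_k}={\eta_k}_{|\tilde U_k}$, while compatibility with $Tp^k_0$ is inherited from that of $\eta_k,\theta_k$, being a local condition preserved under restriction to the open sets $\tilde U_k\subseteq U_k$ and $\bigwedge^k_0(\tilde U)\subseteq \bigwedge^k_0(U)$. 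Lemma \ref{UX}, applied to the inclusion $j_{\tilde U,X}:\tilde U_{\bu}\rightarrow X_{\bu}$, then gives that $H^l(j_{\tilde U,X}):H^l(\tilde U)\rightarrow H^l(X)$ is a natural isomorphism for $0\leq l\leq n$; simultaneously, since $\tilde U_{\bu}\subseteq U_{\bu}$ are Kan $n$-\s manifolds induced by $X_{\bu}$, Lemma \ref{UU} gives that $H^l(j_{\tilde U,U}):H^l(\tilde U)\rightarrow H^l(U)$ is a natural isomorphism. From $j_{\tilde U,X}=j\circ j_{\tilde U,U}$ and functoriality of $H^l$ we obtain $H^l(j)=H^l(j_{\tilde U,X})\circ H^l(j_{\tilde U,U})^{-1}$, so $H^l(j)$ is a natural isomorphism for $0\leq l\leq n$. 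For the final claim, given an arbitrary $k\in\mathbb{N}_0$, Corollary \ref{blubiblupp} supplies a Kan $k$-\s manifold $U_{\bu}$ induced by $X_{\bu}$; by Lemma \ref{weakercase} the presheaf $H^k(U)$ is representable by $\bigoplus_{i=1}^k\,\text{ker}\,{Tp^i_0}_{|X_0}[i]$, and since $H^k$ depends only on levels $0,\dots,k$, transporting along the just-established isomorphism $H^k(j)$ shows that $H^k(X)$ is representable by $\bigoplus_{i=1}^k\,\text{ker}\,{Tp^i_0}_{|X_0}[i]$.

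The step I expect to be the main obstacle is the bookkeeping that makes all the connection data fit together — in particular the (routine but not wholly trivial) fact that $\bigwedge^k_0(\tilde U)$ is an open submanifold of $\bigwedge^k_0(U)$ with $p^k_0$ restricting accordingly, which one extracts from the recursive fiber-product construction of horn spaces in Theorem \ref{yummiiiiiiiiii}. Once Lemma \ref{lemmconn} has secured the embedding-compatibility of the connections, the remainder is the purely formal two-out-of-three argument.
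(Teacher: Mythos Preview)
Your proposal is correct and follows essentially the same approach as the paper: choose compatible connections on $T^{\ast}U_k,\:T^{\ast}\bigwedge^k_0(U)$, invoke Lemma \ref{lemmconn} to produce $\tilde{U}_{\bu}\subseteq U_{\bu}$ and connections $\psi_k$ on $T^{\ast}X_k$ satisfying the hypotheses of Lemma \ref{UX}, and then run the two-out-of-three argument using Lemmas \ref{UX} and \ref{UU}. Your verification of compatibility via restriction is exactly what the paper records as Remark \ref{remaconn} i) and ii), and your closing paragraph on the representability of $H^k(X)$ spells out the step the paper leaves implicit in the corollary statement.
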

     \begin{proof}
     Choose compatible connections $\eta^k,\rho^k,\:2\leq k\leq n$, on $T^{\ast}U_k,\:T^{\ast}\bigwedge^k_0(U)$. By Lemma \ref{lemmconn} and Remark \ref{remaconn} i) there exist an induced Kan $n$-\s manifold $\tilde{U}_{\bu}\subseteq U_{\bu}$ and connections $\psi^k$ on $T^{\ast}X_k$ such that $\psi^k,\:\eta^k_{|\tilde{U_k}},\:\phi^k_{|\tilde{U}_k}$ are compatible connections on
     $T^{\ast}X_k,\:T^{\ast}\tilde{U}_k,\:T^{\ast}\bigwedge^k_0(\tilde{U})$ for $2\leq k\leq n$. Let the inclusions $U_{\bu}\hookrightarrow X_{\bu},\:\tilde{U}_{\bu}\hookrightarrow X_{\bu},\:\tilde{U}_{\bu}\hookrightarrow U_{\bu}$ be denoted by $j_{U,X},\:j_{\tilde{U},X},\:j_{\tilde{U},U}$, respectively. Since according to Lemma \ref{UX} and \ref{UU} the natural transformations $H^l(j_{\tilde{U},X}),\:H^l(j_{\tilde{U},U}),\:0\leq l\leq n$, are natural isomorphisms and $H^l(j_{\tilde{U},X})=H^l(j_{U,X})\circ H^l(j_{\tilde{U},U})$, the morphisms $H^l(j_{U,X})$ are natural isomorphisms as well.
    \end{proof}

         \noindent \textbf{(Step 3)} Finally, we can present the main result of this paper.
  \begin{thm}
    Let $X_{\bu}$ be a \s manifold, then the tangent functor $\textswab{T}(X)$ is representable. Given an induced Kan semi\s manifold $U_{\bu}$ and a choice of compatible connections on $T^{\ast}U_k,T^{\ast}\bigwedge^k_0(U)$, $k\geq 2$, one can construct an isomorphism
    \begin{gather*}
        \textswab{T}(X)\cong \text{Yon}(\bigoplus_{i=1}^{\infty}\text{ker}\,{Tp^i_0}_{|X_0}[i]).
    \end{gather*}
    
    \end{thm}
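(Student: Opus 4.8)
The plan is to obtain $\textswab{T}(X)$ as the limit of the tower $\cdots\to H^2(X)\xrightarrow{p^X_{2,1}}H^1(X)\xrightarrow{p^X_{1,0}}H^0(X)$ and to push the representability statement of Corollary \ref{UXgen} through this limit. Recall from \cite[Lemma\,3.5]{ZR} that $\textswab{T}(X)=\lim_n H^n(X)$ with transition morphisms $p^X_{n,n-1}$. Hence it suffices, for a fixed induced Kan semi\s manifold $U_{\bu}$ and a fixed compatible system of connections, to produce a tower of natural isomorphisms $H^l(X)\cong\text{Yon}(\bigoplus_{i=1}^l\text{ker}\,{Tp^i_0}_{|X_0}[i])$, $l\in\mathbb{N}_0$, that commutes with the projections $p^X_{l,l-1}$ on the source and with the coordinate projections $\bigoplus_{i=1}^l\to\bigoplus_{i=1}^{l-1}$ on the target; one can then take limits.

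To build this tower, I would first fix connections $\eta^k$ on $T^{\ast}U_k$ and compatible connections on $T^{\ast}\bigwedge^k_0(U)$ for $k\geq 2$, and, using Lemma \ref{lemmconn}, a single family of connections $(\psi^k)_{k\geq 2}$ on the $T^{\ast}X_k$ restricting to the $\eta^k$ on sufficiently small induced Kan $n$-\s submanifolds. For every $n\in\mathbb{N}$, Lemma \ref{blado} (cf.\ Corollary \ref{blubiblupp}) shrinks the $n$-truncation of $U_{\bu}$ to a Kan $n$-\s manifold induced by $X_{\bu}$, and Corollary \ref{UXgen} then yields, for $0\leq l\leq n$, natural isomorphisms $H^l(X)\cong\text{Yon}(\bigoplus_{i=1}^l\text{ker}\,{Tp^i_0}_{|X_0}[i])$ compatible with the $p^X_{l,l-1}$. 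The key coherence point is that these isomorphisms do not depend on $n$ on overlapping ranges: $H^l$ only sees levels $0,\dots,l$; the isomorphism of Lemma \ref{weakercase} (hence of Lemma \ref{UX}) is constructed inductively from the connections on $T^{\ast}X_2,\dots,T^{\ast}X_l$ together with the specific fiber-product construction fixed in Remark \ref{uniqueway}; and by Lemma \ref{UU} replacing an induced Kan $n$-\s manifold by a smaller one leaves the isomorphism unchanged. Thus, having fixed $(\psi^k)_{k\geq 2}$ once and for all, the isomorphisms for different $n\geq l$ coincide and assemble into the desired morphism of towers.

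It remains to identify the limit. The Yoneda embedding $\text{Yon}:\catname{\mathbb{Z}Mfd}\to\catname{pSH}(\catname{\mathbb{Z}Mfd})$ preserves all limits, so $\textswab{T}(X)=\lim_l H^l(X)\cong\lim_l\text{Yon}(\bigoplus_{i=1}^l\text{ker}\,{Tp^i_0}_{|X_0}[i])=\text{Yon}(\lim_l\bigoplus_{i=1}^l\text{ker}\,{Tp^i_0}_{|X_0}[i])$, and one checks that the limit in $\catname{NMfd}\subseteq\catname{\mathbb{Z}Mfd}$ of the tower $\bigoplus_{i=1}^l\text{ker}\,{Tp^i_0}_{|X_0}[i]$ along the coordinate projections is $\bigoplus_{i=1}^{\infty}\text{ker}\,{Tp^i_0}_{|X_0}[i]$. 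Indeed, dualizing, the projections induce the inclusions of graded algebras $\Gamma S^{\bu}\big(\bigoplus_{i=1}^{l-1}(\text{ker}\,{Tp^i_0}_{|X_0})^{\ast}[-i]\big)\hookrightarrow\Gamma S^{\bu}\big(\bigoplus_{i=1}^{l}(\text{ker}\,{Tp^i_0}_{|X_0})^{\ast}[-i]\big)$, whose colimit is $\Gamma S^{\bu}\big(\bigoplus_{i=1}^{\infty}(\text{ker}\,{Tp^i_0}_{|X_0})^{\ast}[-i]\big)$ --- a legitimate N-manifold function algebra, since in each fixed degree only finitely many summands contribute --- and a cone over the tower of N-manifolds from a graded manifold $\mathcal{T}$ is exactly an algebra morphism out of this colimit into $C(\mathcal{T})$. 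Combining the three paragraphs gives $\textswab{T}(X)\cong\text{Yon}(\bigoplus_{i=1}^{\infty}\text{ker}\,{Tp^i_0}_{|X_0}[i])$.

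I expect the main obstacle to be precisely the coherence bookkeeping in the second paragraph: verifying that the non-canonical isomorphisms supplied by Corollary \ref{UXgen} at the various truncation levels $n$ can genuinely be arranged into one compatible tower over a single choice of connections on the $T^{\ast}X_k$. The reduction to the limit, the continuity of Yoneda, and the identification of the inverse limit of the finite direct sums with $\bigoplus_{i=1}^{\infty}\text{ker}\,{Tp^i_0}_{|X_0}[i]$ are then either quoted or routine.
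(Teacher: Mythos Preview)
Your proposal is correct and follows essentially the same route as the paper: realize $\textswab{T}(X)=\lim_l H^l(X)$, produce for each $n$ an induced Kan $n$-\s manifold inside $U_{\bu}$, restrict the given connections, invoke Lemma~\ref{weakercase} and Corollary~\ref{UXgen} to get compatible isomorphisms $H^l(X)\cong\text{Yon}(\bigoplus_{i=1}^l\text{ker}\,{Tp^i_0}_{|X_0}[i])$, check these are independent of the truncation level, and pass to the limit. The one difference is cosmetic: you go out of your way to fix a single global family $(\psi^k)$ on the $T^{\ast}X_k$ via Lemma~\ref{lemmconn}, whereas the paper simply restricts the given connections from $T^{\ast}U_k,\:T^{\ast}\bigwedge^k_0(U)$ to each $U^{(n)}_{\bu}$ and lets Corollary~\ref{UXgen} absorb the comparison with $X_{\bu}$ --- the independence of the choice of $U^{(n)}_{\bu}$ is then handled by Lemma~\ref{UU} (cf.\ Remark~\ref{choice}), exactly as you indicate. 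Your detour through $(\psi^k)$ is harmless but unnecessary, and as stated Lemma~\ref{lemmconn} only furnishes such connections for one fixed $n$ at a time; the paper's formulation sidesteps this bookkeeping entirely.
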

    \begin{proof}
    For $n\in \mathbb{N}$ there exists an induced Kan $n$-\s manifold $U^{(n)}_{\bu}$ such that $U^{(n)}_m$ is an open subset of $U_m$ for $m\in \{0,\dots n\}$. For $2\leq k\leq n$ restrict the connections on $T^{\ast}U_k$ and $T^{\ast}\bigwedge^k_0(U)$ to connections on $T^{\ast}U^{(n)}_k$ and $T^{\ast}\bigwedge^k_0(U^{(n)})$. By Lemma \ref{weakercase} and Corollary \ref{UXgen} one has $\text{Yon}(\bigoplus_{i=1}^l\text{ker}\,{Tp^i_0}_{|X_0}[i])\cong H^l(U^{(n)})\cong H^l(X)$ for $0\leq l\leq n$. Note that the isomorphisms $\text{Yon}(\bigoplus_{i=1}^l\text{ker}\,{Tp^i_0}_{|X_0}[i])\cong H^l(X)$ do not depend on the choice of $U_{\bu}^{(n)}$ if one consistently utilizes the particular natural isomorphisms specified in Remark \ref{uniqueway} (see also Remark \ref{choice}).
  It follows that the sequence of presheaves
\begin{gather*}
  \dots\rightarrow H^k(X)\xrightarrow[]{p^X_{k,k-1}} H^{k-1}(X)\rightarrow \dots  
\end{gather*}
corresponds to the sequence of N-manifolds
\begin{gather*}
  \dots\rightarrow \bigoplus_{i=1}^k \text{ker}\,{Tp^i_0}_{|X_0}[i]\rightarrow  
  \bigoplus_{i=1}^{k-1} \text{ker}\,{Tp^i_0}_{|X_0}[i]\rightarrow \dots
\end{gather*}
Thus, $\textswab{T}(X)=\lim_{k}H^k(X)\cong\lim_{k} \text{Yon}(\bigoplus_{i=1}^k \text{ker}\,{Tp^i_0}_{|X_0}[i])=\text{Yon}(\bigoplus_{i=1}^{\infty} \text{ker}\,{Tp^i_0}_{|X_0}[i])$.
    \end{proof}
    \begin{rema}
    \label{choice}
    Under the assumption that one consistently uses the particular natural isomorphisms specified in Remark \ref{uniqueway}, the isomorphism $ \textswab{T}(X)\cong \text{Yon}(\bigoplus_{i=1}^{\infty}\text{ker}\,{Tp^i_0}_{|X_0}[i])$ given in the previous theorem only depends on “germs of connections". To formulate this precisely, let us introduce the following equivalence relation. For $U_{\bu},\:\tilde{U}_{\bu}$ being Kan semi\s manifolds induced by a fixed \s manifold $X_{\bu}$, two collections of degree-wise compatible connections $(\eta^k,\rho^k)_{k\geq 2}$, $(\tilde{\eta}^k,\tilde{\rho}^k)_{k\geq 2}$ defined on $T^{\ast}U_k,\:T^{\ast}\bigwedge^k_0(U)$ and $T^{\ast}\tilde{U}_k,\:T^{\ast}\bigwedge^k_0(\tilde{U})$, $k\geq 2$, are equivalent if their restrictions agree on an induced Kan semi\s manifold $U_{\bu}^{\prime}\subseteq U_{\bu}\cap \tilde{U}_{\bu}$.\\
    Given a \s manifold $X_{\bu}$, induced Kan semi\s manifolds $U_{\bu}$, $\tilde{U}_{\bu}$ with two equivalent collections of connections and Kan $n$-\s submanifolds $U^{(n)}_{\bu}\subseteq U_{\bu}$, $\tilde{U}^{(n)}_{\bu}\subseteq \tilde{U}_{\bu}$ like in the proof of the previous theorem, there exists an induced Kan $n$-\s manifold ${U^{\prime}}^{(n)}_{\bu}\subseteq U^{(n)}_{\bu}\cap {\tilde{U}}^{(n)}_{\bu}$ such that the following diagram commutes for $0\leq l\leq n$.
    \begin{center}
% https://tikzcd.yichuanshen.de/#N4Igdg9gJgpgziAXAbVABwnAlgFyxMJZARgBpiBdUkANwEMAbAVxiRAB12cYAPHYAJoEAvgApOAIywBzCGmZwA+sCwBeYsIB6wBsM7c+wANYwATnvalgAFTSasigAzDlAHwAaT4ciwUAlCDCViAY2HgERGQATFS0jCxsABKaDKLAAKranGimWAC2MMJawKJgfsIBQeiYuPiEKGSOsfTMrIggyanulcGhtREojuTN8W0dKaKZJWUVgb014fXIUcPULQntneJcWAywGcWl5ZWxMFDS8ESgAGamEHlIQyA4EEgrca1snADGBNIg1AYWDAYzgECBUABIAAFjA6JDEGAmAwGNQcHRdmxICC5jc7g9EGRnq9EE91mMfn8oQw6BIYAwAAoLOpsBgwa44XEgW73R5okkAZjWoy+7F+YH+gNp9KZYRZ7Vy0mhnKq3PxSCJLyQAtVPIJQuJb116sQABZ+W8pXTGcyBiBFcquXq+YazcLPu1KRKnSbNSTTcIKMIgA
\begin{tikzcd}
                        & H^l(X)                                                                                                                    &                                 \\
H^l(U^{(n)}) \arrow[ru] & {\text{Yon}(\bigoplus_{i=1}^{l}\text{ker}\,{Tp^i_0}_{|X_0}[i])} \arrow[u, "\cong"] \arrow[l, "\cong"'] \arrow[r, "\cong"] & H^l(\tilde{U}^{(n)}) \arrow[lu] \\
                        & H^l({U^{\prime}}^{(n)}) \arrow[u, "\cong"] \arrow[lu] \arrow[ru]                                                          &                                
\end{tikzcd}
    \end{center}
Here, the particular isomorphisms $H^l({U^{\prime}}^{(n)})\cong {\text{Yon}(\bigoplus_{i=1}^{l}\text{ker}\,{Tp^i_0}_{|X_0}[i])}$ (specified in Remark \ref{uniqueway}) are induced by the restricted connections on ${U^{\prime}_{\bu}}^{(n)}$.
%induced by the connections on $U,\:\tilde{U}$
 \noindent     Consequently, the natural isomorphisms $ \textswab{T}(X)\cong \text{Yon}(\bigoplus_{i=1}^{\infty}\text{ker}\,{Tp^i_0}_{|X_0}[i])$ induced by the two choices of connections for $\tilde{U}_{\bu},U_{\bu}$ coincide.
    
\end{rema}

\section{Examples}
\noindent Let us first present an example which illustrates how simplicial manifolds may fail to be Kan.
%\begin{example}
    %Let $X_{\bu}$ be the \s manifold whose $n$th term $X_n=\mathbb{R}_{n,0}\sqcup \dots \sqcup \mathbb{R}%_{n,n}$ is defined as the disjoint union of $n+1$ copies of $\mathbb{R}$. Degeneracy maps 
    %$s^n_{\bu}:X_n\rightarrow X_{n+1}$ map $\mathbb{R}_{n,j},j\in \{0,\dots,n\}$, identically to $
    %\mathbb{R}_{n+1,j}$. Face maps $d^n_{\bu}:X_n\rightarrow X_{n-1}$ map $\mathbb{R}_{n,j}$, %identically to $\mathbb{R}_{n-1,j}$ for $j\in \{0,\dots,n-1\}$ and send the $n$th copy $\mathbb{R}%_{n,n}$ to $0\in \mathbb{R}_{n-1,n-1}$. For dimensional reasons, $X_{\bu}$ cannot satisfy any Kan %conditions. If one shrinks the terms $X_n$ to $\mathbb{R}_{0,n}$, one obtains the constant (Kan) \s %manifold with value $\mathbb{R}$.
%\end{example}
\begin{example}
We gradually construct a \s manifold whose $(m,i)$-horn spaces do not exist for $m\geq 2$ and thus cannot satisfy any Kan conditions except the 1-Kan conditions. We start with the 1-\s manifold $X_{\bu}^{(1)}$ 
    whose terms are $X^{(1)}_0=\mathbb{R}$ and $X^{(1)}_1=\mathbb{R}_{(0)}\sqcup \mathbb{R}_{(1)}$, where $X^{(1)}_1$ is defined as the disjoint union of two copies of $\mathbb{R}$. The degeneracy map $s_0:X_0^{(1)}\rightarrow X_1^{(1)}$ maps identically to the first copy $\mathbb{R}_{(0)}$. The face maps $d_0,d_1:X_1^{(1)}\rightarrow X_0^{(1)}$ send $\mathbb{R}_{(0)}$ identically to $X_0$ and map $\mathbb{R}_{(1)}$ to $0$. With $\text{sk}^1(X^{(1)})$ being the 1-skeleton of $X^{(1)}_{\bu}$, define the 2-\s manifold $X^{(2)}_{\bu}$ given by $X^{(2)}_0=X^{(1)}_0,\:X^{(2)}_1=X^{(1)}_1,\:X^{(2)}_2=(\text{sk}^1(X^{(1)}))_2\sqcup \mathbb{R}_{(2)}$. The 2-face maps of $X^{(2)}_{\bu}$ map $\mathbb{R}_{(2)}\subseteq X_2^{(2)}$ to $0\in \mathbb{R}_{(1)}$, apart from that the structure maps of $X_{\bu}^{(2)}$ coincide with the structure maps of the 2-truncation $\text{tr}_2\text{sk}^1(X^{(1)})$. Repeating the above procedure, i.e. recursively defining the $(n+1)$-\s manifold $X^{(n+1)}_{\bu}$ by adding a disjoint copy $\mathbb{R}_{(n+1)}$ to $(\text{tr}_{n+1}\text{sk}^n(X^{(n)}))_{n+1}$ which is mapped by degeneracy maps to $0\in \mathbb{R}_{(n)}$, one obtains a sequence of truncated \s manifolds $(X^{(n)}_{\bu})_{n\in \mathbb{N}}$. Let $X_{\bu}$ be the \s manifold whose $n$-truncation is equal to $X^{(n)}_{\bu}$. Then the $(m,i)$-horn spaces of $X_{\bu}$ do not exist as they are (setwise) disjoint unions of manifolds of different dimension. 
\end{example}
\noindent As mentioned in the introduction, it is quite challenging to find interesting examples of non-Kan simplicial manifolds. Therefore, we confine ourselves to illustrating how Theorem \ref{yummiiiiiiiiii} can be applied to simplify the computation of the tangent complex of higher Lie groupoids. Exemplarily, we consider the case of \textbf{String Lie 2-group}.

\begin{example}
    Given a compact, connected and simple Lie group G, the String Lie 2-group $\text{String}(G)$ is defined as a central Lie 2-group extension of $G$ by $B\mathbb{S}^1$ as specified in \cite{schommer}, where the extension class is the generator of $H^3(BG,\mathbb{S}^1)\cong \mathbb{Z}$. Let $(U^{(n)}_i)_{i\in I^n}$ be a \s cover of $BG$ as defined in \cite{WZ}. Then the first three levels of $\text{String}(G)$ (seen as a \s manifold $X_{\bu}$) are defined as
    \begin{gather*}
        X_0=\ast,\:X_1=U_{[0]}^{(1)}, 
        X_2=(((U_{[1]}^{(1)})^{\times 3}\times_{(U_{[0]}^{(1)})^{\times 3}} U_{[0]}^{(2)})/U_{[1]}^{(2)})\times \mathbb{S}^1
    \end{gather*}
    where $U^{(p)}_{[q]}$ denotes the disjoint union $\bigsqcup_{i_0,\dots,i_q\in I^p}U_{i_0}\cap \dots\cap U_{i_q}$ of $(q+1)$-fold intersections of the open cover $(U^{(p)}_i)_{i\in I^p}$ of $(BG)_p$ (see \cite{WZ}, \cite{schommer} for details). The 2-face maps $d_i^2:X_2\rightarrow X_1$ are given by 
    \begin{gather*}
        d_0(([x^{(1)}_{i_0,i_1},x^{(1)}_{j_0,j_1}, x^{(1)}_{k_0,k_1}, x^{(2)}_{l_0}],s))=x^{(1)}_{j_0},\:\:\:\:
         d_1(([x^{(1)}_{i_0,i_1},x^{(1)}_{j_0,j_1}, x^{(1)}_{k_0,k_1}, x^{(2)}_{l_0}],s))=x^{(1)}_{k_0}\\
         d_2(([x^{(1)}_{i_0,i_1},x^{(1)}_{j_0,j_1}, x^{(1)}_{k_0,k_1}, x^{(2)}_{l_0}],s))=x^{(1)}_{i_0}.
    \end{gather*}

   \noindent By choosing suitable small open neighborhoods $V_1$ of $X_0\subseteq X_1$, $V_2$ of $X_0\subseteq  X_2$ as in the proof of Theorem \ref{yummiiiiiiiiii} such that $((U_{[1]}^{(1)})^{\times 3}\times_{(U_{[0]}^{(1)})^{\times 3}} U_{[0]}^{(2)})/U_{[1]}^{(2)}$ covers $V_1\times V_1$, it follows that ${\text{ker}Tp^1_0}_{|\ast}=\mathfrak{g}$ and ${({\text{ker}Tp^2_0}_{|V_2})}_{|\ast}=\mathbb{R}$, where $\mathfrak{g}$ denotes the Lie algebra of $G$. Thus, the tangent complex of $\text{String}(G)$ is given by \begin{gather*}
       \dots\rightarrow 0\rightarrow \mathbb{R}\rightarrow \mathfrak{g}\rightarrow 0.
   \end{gather*}
\end{example}

\appendix 
%\addcontentsline{}{}{Appendix}

\section{Simplicial vector spaces}

\noindent We will now provide a proof of Lemma \ref{yone}, showing that simplicial vector spaces satisfy the undermentioned generalized Kan conditions. This fact was used in the proof of Theorem \ref{yummiiiiiiiiii}.
\begin{lemm}
\label{proofapp1}
Let $V_{\bu}$ be a \s vector space. For $n,m\in \mathbb{N},\:j\in \mathbb{N}_0$, $0\leq j\leq m<n$, the generalized horn projections  $p^n_{\{j,m+1,\dots,n\}}:V_n\rightarrow \bigwedge^n_{\{j,m+1,\dots,n\}}(V)\subseteq V_{n-1}^{\oplus ([m]\setminus \{j\})}$ are surjective.
\end{lemm}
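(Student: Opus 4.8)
The plan is to reduce the statement to the classical fact that a \s vector space $V_{\bu}$ satisfies all the ordinary Kan conditions, i.e.\ every full horn projection $p^n_i\colon V_n\to\bigwedge^n_i(V)$ is surjective (see \cite[Thm.\,17.1]{may}), and then to induct on $n$. Since $\{j,m+1,\dots,n\}\supseteq\{j\}$, the inclusion of \s sets $\bigwedge^n_{\{j,m+1,\dots,n\}}\hookrightarrow\bigwedge^n_j$ induces on horn spaces a restriction map $\rho\colon\bigwedge^n_j(V)\to\bigwedge^n_{\{j,m+1,\dots,n\}}(V)$ which merely forgets the coordinates indexed $m+1,\dots,n$; explicitly $\rho\colon(x^k)_{k\neq j}\mapsto(x^k)_{k\leq m,\,k\neq j}$. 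Since $p^n_{\{j,m+1,\dots,n\}}=\rho\circ p^n_j$ and $p^n_j$ is surjective, it is enough to prove that $\rho$ is surjective.

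To prove surjectivity of $\rho$, I would factor it as a composite of ``adjoin one face at a time'' restrictions $\bigwedge^n_{\{j,r+1,\dots,n\}}(V)\to\bigwedge^n_{\{j,r,r+1,\dots,n\}}(V)$ for $r=n,n-1,\dots,m+1$ (the first one having source $\bigwedge^n_j(V)$), each of which forgets the coordinate indexed $r$, and show each is surjective. Given a compatible tuple $(x^k)_{k\in\{0,\dots,r-1\}\setminus\{j\}}$, specializing the compatibility relations to the index $r$ shows that the only new conditions on an adjoined element $x^r\in V_{n-1}$ are $d_k(x^r)=d_{r-1}(x^k)$ for all $k\in\{0,\dots,r-1\}\setminus\{j\}$, i.e.\ $p^{n-1}_{\{j,r,\dots,n-1\}}(x^r)=(d_{r-1}x^k)_k$, where by convention $\bigwedge^{n-1}_{\{j,r,\dots,n-1\}}(V)$ denotes the full horn space $\bigwedge^{n-1}_j(V)$ when $r=n$. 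Using the \s identities $d_ad_b=d_{b-1}d_a$ for $a<b$ one checks that $(d_{r-1}x^k)_k$ indeed lies in $\bigwedge^{n-1}_{\{j,r,\dots,n-1\}}(V)$, so the induction hypothesis (the Lemma in dimension $n-1$), or the Kan property of $V_{\bu}$ when $r=n$, yields an $x^r$ with $p^{n-1}_{\{j,r,\dots,n-1\}}(x^r)=(d_{r-1}x^k)_k$; this completes the chosen tuple. The base case $n=2$ is immediate: there $m=1$, $\bigwedge^2_{\{j,2\}}(V)=V_1$, and the projection is the lone face map $d_{1-j}\colon V_2\to V_1$, which is onto because $d_{1-j}s_{1-j}=\text{id}$.

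The substantive content is exactly the surjectivity of $\rho$: that in a \s vector space a partial horn always extends to a full horn -- something which fails for general Kan complexes but succeeds here. The two bookkeeping claims are where care is needed: that the conditions coupling $x^r$ to the already chosen faces are precisely $p^{n-1}_{\{j,r,\dots,n-1\}}(x^r)=(d_{r-1}x^k)_k$, and that this target tuple lies in the horn space to which the lower-dimensional Lemma applies; both are resolved by the \s identities, and one must also confirm that the invoked instances have admissible index ranges ($0\leq j\leq r-1<n-1$, $r-1\geq 1$). A more hands-on alternative would use the recursive fibre-product description of $\bigwedge^n_{\{j,m+1,\dots,n\}}(V)$ from the proof of Theorem~\ref{yummiiiiiiiiii}, but that route requires building explicit correction terms out of degeneracy maps and is messier than the restriction argument.
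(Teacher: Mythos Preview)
Your proof is correct and takes a genuinely different route from the paper's. The paper writes down an explicit linear section: given a partial horn $(v_{n-1}^0,\dots,\widehat{v_{n-1}^j},\dots,v_{n-1}^m)$, it sets
\[
v_n:=\sum_{l=0}^{m-1}(-1)^l\sum_{\substack{0\leq j_0<\dots<j_l\leq m\\ j_0,\dots,j_l\neq j}} s_{j_l}d_{j_l}\cdots s_{j_1}d_{j_1}s_{j_0}(v_{n-1}^{j_0})
\]
and then verifies by a page of \s identities that $d_i(v_n)=v_{n-1}^i$ for each $i\in\{0,\dots,m\}\setminus\{j\}$. Your argument instead reduces the problem to the classical Kan property together with the same statement in dimension $n-1$, by factoring the restriction $\rho:\bigwedge^n_j(V)\to\bigwedge^n_{\{j,m+1,\dots,n\}}(V)$ into one-face-at-a-time surjections and recognising each intermediate filling problem as a generalized horn projection in $V_{n-1}$. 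The index checks you flag ($j\leq m\leq r-1$, $r-1<n-1$ when $r<n$, and $r-1\geq 1$ since $m\geq 1$) all go through, and the verification that $(d_{r-1}x^k)_k$ lands in $\bigwedge^{n-1}_{\{j,r,\dots,n-1\}}(V)$ is a short application of $d_ad_b=d_{b-1}d_a$.

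What each buys: the paper's formula produces an honest linear right inverse to $p^n_{\{j,m+1,\dots,n\}}$, which is a slightly stronger output and is the ``hands-on'' route you allude to at the end; on the other hand it requires a careful telescoping computation. Your inductive argument is shorter and more conceptual, and is entirely adequate for the use made of the Lemma in Theorem~\ref{yummiiiiiiiiii}, where only surjectivity (indeed only surjectivity of the differential at points of $X_0$) is needed.
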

\begin{proof}
We show that there exists an explicit right inverse for $p^n_{\{j,m+1,\dots,n\}}$.
Given $(v_{n-1}^0,\dots,\widehat{v_{n-1}^j},\dots,v_{n-1}^m)\in\bigwedge^n_{\{j,m+1,\dots,n\}}(V)$, the element
    \begin{gather*}
   v_n:= \sum_{l=0}^{m-1}(-1)^l\sum_{\substack{0\leq j_0<\dots<j_l\leq m\\   j_0,\dots,j_l\neq j}}s_{j_l}d_{j_l}\dots s_{j_1}d_{j_1}s_{j_0}(v_{n-1}^{j_0})
   %\sum_{0\leq j_0\leq m;\,j_0 \neq j}s_{j_0}(v_{n-1}^{j_0})-\sum_{0\leq j_0<j_1\leq %m;\,j_0,j_1\neq j}s_{j_1}d_{j_1}s_{j_0}(v_{n-1}^{j_0})+\dots\\
    %\pm s_{m}d_{m}\dots \widehat{s_jd_j}\dots s_0(v_{n-1}^0)
\end{gather*}
turns out to be a horn filler, i.e. $d_i(v_n)=v_{n-1}^i$ for $i\in \{0,\dots m\}\setminus \{j\}$. Note that for $i\in \{0,\dots m\}\setminus \{j\}$
\begin{gather*}
    d_i(v_n)=\underbracket{d_is_i(v_{n-1}^i)}_{=v_{n-1}^i}+d_i(\sum_{j_0\notin \{i,j\}}s_{j_0}(v_{n-1}^{j_0})-\sum_{\substack{j_0<j_1\\   j_0,j_1\neq j}}s_{j_1}d_{j_1}s_{j_0}(v_{n-1}^{j_0})+\sum_{l=2}^{m-1}(-1)^l\sum_{\substack{0\leq j_0<\dots<j_l\leq m\\   j_0,\dots,j_l\neq j}}s_{j_l}d_{j_l}\dots s_{j_1}d_{j_1}s_{j_0}(v_{n-1}^{j_0})),
\end{gather*}
so it needs to be shown that the term 
\begin{gather*}
    d_i(\sum_{j_0\notin \{i,j\}}s_{j_0}(v_{n-1}^{j_0})-\sum_{j_0<j_1;j_0,j_1\neq j}s_{j_1}d_{j_1}s_{j_0}(v_{n-1}^{j_0})+\dots\pm s_{m}d_{m}\dots \widehat{s_jd_j}\dots s_0(v_{n-1}^0))
\end{gather*}
vanishes. As $v_n$ is defined as an alternating sum of sums, it is enough to verify that for $l\in \{0,\dots,m-2\}$ the term $d_i(\sum_{j_0<\dots<j_l;j_0,\dots,j_l\notin\{j,i\}}s_{j_l}d_{j_l}\dots s_{j_0}(v_{n-1}^{j_0}))$ equals $d_i(\sum_{j_0<\dots<j_{l+1};i\in\{j_0,\dots,j_{l+1}\};j_0,\dots,j_{l+1}\neq j}s_{j_{l+1}}d_{j_{l+1}}\dots s_{j_0}(v_{n-1}^{j_0}))$.\\
\\
Using simplicial identities, this follows from 
\begin{gather*}
    d_i(\sum_{\substack{i<j_0<\dots<j_l \\ j_0,\dots,j_l\neq j}}s_{j_l}d_{j_l}\dots s_{j_0}(v_{n-1}^{j_0}))=\sum_{\substack{i<j_0<\dots<j_l \\ j_0,\dots,j_l\neq j}}s_{j_l-1}d_{j_l-1}\dots s_{j_0-1}d_i(v_{n-1}^{j_0})\\
    =\sum_{\substack{i<j_0<\dots<j_l \\ j_0,\dots,j_l\neq j}}s_{j_l-1}d_{j_l-1}\dots s_{j_0-1}d_{j_0-1}(v_{n-1}^{i})=\sum_{\substack{i=j_0<\dots<j_{l+1} \\ j_0,\dots,j_{l+1}\neq j}}s_{j_{l+1}-1}d_{j_{l+1}-1}\dots s_{j_1-1}d_{j_1-1}d_{j_0}s_{j_0}(v_{n-1}^{j_0})\\
    =d_i(\sum_{\substack{i=j_0<\dots<j_{l+1} \\ j_0,\dots,j_{l+1}\neq j}}s_{j_{l+1}}d_{j_{l+1}}\dots s_{j_1}d_{j_1}s_{j_0}(v_{n-1}^{j_0})),\\
     d_i(\sum_{\substack{j_0<\dots<j_q<i<j_{q+1}<\dots<j_l \\ j_0,\dots,j_l\neq j}}s_{j_l}d_{j_l}\dots s_{j_0}(v_{n-1}^{j_0}))\\
     =\sum_{\substack{j_0<\dots<j_q<i<j_{q+1}<\dots<j_l \\ j_0,\dots,j_l\neq j}}s_{j_l-1}d_{j_l-1}\dots s_{j_{q+1}-1}d_{j_{q+1}-1}d_is_{j_q}d_{j_q}\dots s_{j_0}(v_{n-1}^{j_0})\\
     =\sum_{\substack{j_0<\dots<j_q<i=j_{q+1}<\dots<j_{l+1} \\ j_0,\dots,j_{l+1}\neq j}}s_{j_{l+1}-1}d_{j_{l+1}-1}\dots s_{j_{q+2}-1}d_{j_{q+2}-1}d_is_{j_{q+1}}d_{j_{q+1}}s_{j_q}d_{j_q}\dots s_{j_0}(v_{n-1}^{j_0})
     \end{gather*}
     \begin{gather*}
     =d_i(\sum_{\substack{j_0<\dots<j_q<i=j_{q+1}<\dots<j_{l+1} \\ j_0,\dots,j_{l+1}\neq j}}s_{j_{l+1}}d_{j_{l+1}}\dots s_{j_0}(v_{n-1}^{j_0}))\:\:\:\text{for}\:\:q\in\{0,\dots,l-1\}\:\:\text{and}\\
         d_i(\sum_{\substack{j_0<\dots<j_l<i\\ j_0,\dots,j_l\neq j}}s_{j_l}d_{j_l}\dots s_{j_0}(v_{n-1}^{j_0}))=  d_i(\sum_{\substack{j_0<\dots<j_l<j_{l+1}=i\\ j_0,\dots,j_{l+1}\neq j}}s_{j_{l+1}}d_{j_{l+1}}\dots s_{j_0}(v_{n-1}^{j_0})).
\end{gather*}
\end{proof}
%\noindent blabla
%\begin{lemm}
%\label{germeq}
%Let $X_{\bu},Y_{\bu}$ be \s manifolds. Then the following statements are equivalent:
%\begin{enumerate}
% \renewcommand{\labelenumi}{\roman{enumi})}
%    \item $X_{\bu},Y_{\bu}$ have the same germ.
%    \item $X_{\bu},Y_{\bu}$ have the same base space and there exist $n$-\s manifolds %$U^{(n)}_{\bu},\:n\in \mathbb{N}$, induced by both $X_{\bu}$ and $Y_{\bu}$, such that %$tr_n(U^{(n+1)}_{\bu})\subseteq U^{(n)}_{\bu}$. Here, $tr_n(U^{(n+1)}_{\bu})$ denotes the %$n$-truncation of $U^{(n+1)}_{\bu}$ .
 %   \item $X_{\bu},Y_{\bu}$ have the same base space and for every $n\in \mathbb{N}$ there exists %an $n$-\s manifold induced by both $X_{\bu}$ and $Y_{\bu}$.
%\end{enumerate}
%\end{lemm}
%\begin{proof}
%blabla
%\end{proof}

\noindent In the following it will be demonstrated that besides the normalization functor $N:\catname{sVect}\rightarrow \catname{Ch_{\geq 0}(Vect)}$ defined in section 3.2, there exists another normalization functor $\tilde{N}:\catname{sVect}\rightarrow \catname{Ch_{\geq 0}(Vect)}$ which is naturally isomorphic to $N$. When extending this isomorphism, one can show that the two versions of the tangent complex presented in Remark \ref{twoversions} are naturally isomorphic.

\begin{defi}
   Let $V_{\bu}$ be a \s vector space. For $n\in \mathbb{N},\:m,i\in \mathbb{N}_0$, $0\leq i \leq m\leq n$, define the following linear subspaces of $V_n$:
   \begin{gather*}
       N^n_{i,m}(V):=\begin{cases}V_n\:\:\:\:\:\:\:\:\:\:\:\:\:\:\:\:\:\:\:\:\:\:\:\:\:\:\:\:\:\:\:\:\:\:\:\:\:\:\:\:\:\:\:\:\:\:\:\:\:\:\:\:\:\:\:\:\:\:\:\:\:\:\:\:\:\:\:\:\text{if}\:\:m=0\\
       \bigcap_{j\in\{0,\dots,m\}\setminus\{i\}}\,\text{ker}\,(d_j:V_n\rightarrow V_{n-1})\:\:\:\;\text{if}\:\:m>0\\
       \end{cases}.
   \end{gather*}
   Further, let $(N(V)_{\bu},\partial_{\bu})$, $(\tilde{N}(V)_{\bu},\tilde{\partial}_{\bu})$ be the connected chain complexes of vector spaces which are given by
   \begin{gather*}
       N(V)_n=N_{n,n}^n(V),\:n\in \mathbb{N}_0\:\:\:\:\:\:\:\:\:\partial_n=(-1)^nd_n^n,\:n\in \mathbb{N}\\
       \tilde{N}(V)_n=N_{0,n}^n(V),\:n\in \mathbb{N}_0\:\:\:\:\:\:\:\:\:\tilde{\partial}_n=d_0^n,\:n\in \mathbb{N}.
   \end{gather*}
\end{defi}

\begin{lemm}
\label{useless}
Let $V_{\bu}$ be a \s vector space. There exists a natural chain isomorphism between the associated chain complexes of vector spaces $(N(V)_{\bu},\partial_{\bu})$ and $(\tilde{N}(V)_{\bu},\tilde{\partial}_{\bu})$.
\end{lemm}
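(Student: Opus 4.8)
The plan is to exhibit the desired chain isomorphism $\phi_{\bu}\colon (N(V)_{\bu},\partial_{\bu})\to(\tilde N(V)_{\bu},\tilde\partial_{\bu})$ as the composite of two canonical identifications, each realizing one of the two normalized complexes as the quotient of $V_{\bu}$ by the span of its degenerate simplices. Concretely, set $D_n:=\sum_{i=0}^{n-1}\text{im}\,(s_i\colon V_{n-1}\to V_n)$. The first step is to recall the Dold--Kan decomposition $V_n=N(V)_n\oplus D_n$ (see e.g.\ \cite{may}). Since $D_n$ is the span of \emph{all} degenerate $n$-simplices, it is preserved by the order-reversing automorphism $\tau$ of $\Delta$ (which merely permutes the set of degeneracy operators $V_{n-1}\to V_n$ among themselves, and sends $d_i$ to $d_{n-i}$). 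Applying the standard decomposition to the simplicial vector space $V^{\tau}$ obtained by precomposing $V$ with $\tau$ — whose normalized subspace in degree $n$ is exactly $\bigcap_{j=1}^{n}\text{ker}\,d_j=\tilde N(V)_n$ — yields $V_n=\tilde N(V)_n\oplus D_n$ as well. Hence both composites
\begin{gather*}
  N(V)_n\hookrightarrow V_n\twoheadrightarrow V_n/D_n\qquad\text{and}\qquad \tilde N(V)_n\hookrightarrow V_n\twoheadrightarrow V_n/D_n
\end{gather*}
are isomorphisms of vector spaces, and I define $\phi_n$ to be the first one followed by the inverse of the second. Every map here is assembled from the face and degeneracy operators, so $\phi_{\bu}$ is automatically natural in $V$.

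It then remains to check that $\phi_{\bu}$ is a chain map, and for this I would route everything through the Moore (unnormalized) complex $(V_{\bu},b_{\bu})$ with $b_n=\sum_{i=0}^{n}(-1)^i d_i$. The simplicial identities for $d_is_j$ show that $D_{\bu}$ is a subcomplex of $(V_{\bu},b_{\bu})$, so $b_{\bu}$ descends to a differential $\bar b_{\bu}$ on $V_{\bu}/D_{\bu}$. For $x\in N(V)_n$ one has $d_ix=0$ for $i<n$, hence $b_nx=(-1)^nd_nx=\partial_nx$; combined with the standard fact that $\partial_n$ carries $N(V)_n$ into $N(V)_{n-1}$, this says precisely that $(N(V)_{\bu},\partial_{\bu})\to(V_{\bu}/D_{\bu},\bar b_{\bu})$ is an isomorphism of chain complexes. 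Dually, for $y\in\tilde N(V)_n$ one has $d_iy=0$ for $i\geq 1$, hence $b_ny=d_0y=\tilde\partial_ny$; and $\tilde\partial_n$ carries $\tilde N(V)_n$ into $\tilde N(V)_{n-1}$ (a one-line check from $d_jd_0=d_0d_{j+1}$), so $(\tilde N(V)_{\bu},\tilde\partial_{\bu})\to(V_{\bu}/D_{\bu},\bar b_{\bu})$ is likewise a chain isomorphism. Therefore $\phi_{\bu}$, being the composite of two chain isomorphisms (one inverted), is a natural chain isomorphism, as desired. Note that no signs need to be reconciled: after passing to the quotient, both $\partial_{\bu}$ and $\tilde\partial_{\bu}$ are literally restrictions of the single Moore differential $b_{\bu}$.

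I do not expect a serious obstacle here — the statement is classical, essentially the assertion that the normalized \emph{subcomplex} and the normalized \emph{quotient} complex of a simplicial vector space coincide, in either of its two kernel conventions. The only genuinely content-bearing inputs are the Dold--Kan splitting $V_n=N(V)_n\oplus D_n$ and its order-reversed twin $V_n=\tilde N(V)_n\oplus D_n$; once those two direct-sum decompositions of $V_n$ are in hand, compatibility with the differentials is immediate as above. If a self-contained argument is preferred, the place to spend effort is proving $\tilde N(V)_n\cap D_n=0$ and $\tilde N(V)_n+D_n=V_n$ directly, by the usual induction on the ``last degeneracy'' filtration; alternatively one can write $\phi_n$ out explicitly as a suitable ordered product of the projectors $1-s_id_i$ applied to $N(V)_n$ and verify the chain-map identity by a direct manipulation of simplicial identities, in the spirit of the computation in Lemma \ref{proofapp1}.
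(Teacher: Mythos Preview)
Your argument is correct. You route both normalized complexes through the common quotient $V_{\bu}/D_{\bu}$ by degenerate simplices, invoking the Dold--Kan splitting $V_n=N(V)_n\oplus D_n$ and its image under the order-reversing involution $\tau$ of $\Delta$ to obtain the twin splitting $V_n=\tilde N(V)_n\oplus D_n$; the chain-map property then follows cleanly from the observation that on each normalized subspace the Moore differential $b_n$ collapses to $\partial_n$ respectively $\tilde\partial_n$. The paper takes instead the explicit route you sketch as an alternative in your final paragraph: it writes down the projectors
\[
\gamma^n_{n,n}=(id-s_{n-1}d_{n-1})\circ\dots\circ(id-s_0d_0),\qquad
\gamma^n_{0,n}=(id-s_0d_1)\circ\dots\circ(id-s_{n-1}d_n),
\]
proves by a short induction on $m$ (using only the simplicial identities) that their restrictions to $\tilde N(V)_n$ and $N(V)_n$ are mutual inverses, and verifies directly that $(-1)^nd_n\circ\gamma^n_{n,n}=d_0=\gamma^{n-1}_{n-1,n-1}\circ d_0$. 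Your approach is more conceptual and shorter, but treats the Dold--Kan decomposition as a black box; the paper's approach is fully self-contained and, more to the point for its purposes, yields an explicit formula for the isomorphism built out of faces and degeneracies alone---which is what one needs in Remark~\ref{twoversions} to promote the statement to an isomorphism of the two tangent-complex functors on simplicial manifolds.
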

\begin{proof}
For $n\in \mathbb{N},\:m\in \mathbb{N}_0,\:0\leq m\leq n$, define the following projections:
\begin{gather*}
    \gamma^n_{0,m}:=(id-s_0d_1)\circ\dots \circ (id-s_{m-1}d_m):V_n\rightarrow N^n_{0,m}(V)\\
    \gamma^n_{m,m}:=(id-s_{m-1}d_{m-1})\circ \dots \circ (id-s_0d_0):V_n\rightarrow N^n_{m,m}(V)
\end{gather*}
 and set $\gamma^n_{0,0}:=id_{V_n}$. The restrictions ${\gamma^n_{m,m}}_{|N^n_{0,m}},\:{\gamma^n_{0,m}}_{|N^n_{m,m}}$, $m\in \{0,\dots,n\}$, prove to be inverses of each other. For a fix $n$ we show this inductively over $m$. For $m=1$ one has 
 \begin{gather*}
     \gamma^n_{1,1}\circ {\gamma^n_{0,1}}_{|N^n_{1,1}}={(id-s_0d_0)}_{|N^n_{1,1}}=id_{N^n_{1,1}},\\
     \gamma^n_{0,1}\circ {\gamma^n_{1,1}}_{|N^n_{0,1}}={(id-s_0d_1)}_{|N^n_{0,1}}=id_{N^n_{0,1}}.
 \end{gather*}
 Assuming that the statement is true until $m$, it follows with the \s identities that 
 \begin{gather*}
     \gamma^n_{m+1,m+1}\circ {\gamma^n_{0,m+1}}_{|N^n_{m+1,m+1}}=(\gamma^n_{m,m}-s_md_m\gamma^n_{m,m})\circ{(\gamma^n_{0,m}-\gamma^n_{0,m}s_md_{m+1})}_{|N^n_{m+1,m+1}}\\
     =(\gamma^n_{m,m}\circ {\gamma^n_{0,m}}-\gamma^n_{m,m}\circ \gamma^n_{0,m}s_md_{m+1}-s_md_m\gamma^n_{m,m}\circ {\gamma^n_{0,m}}+s_md_m\gamma^n_{m,m}\circ {\gamma^n_{0,m}}s_md_{m+1})_{|N^n_{m+1,m+1}}\\
     =(id-s_md_{m+1}-s_md_m+s_md_{m+1})_{|N^n_{m+1,m+1}}=id_{N^n_{m+1,m+1}},\\
     \gamma^n_{0,m+1}\circ {\gamma^n_{m+1,m+1}}_{|N^n_{0,m+1}}=(\gamma^n_{0,m}-\gamma^n_{0,m}s_md_{m+1})\circ(\gamma^n_{m,m}-s_md_m\gamma^n_{m,m})_{|N^n_{0,m+1}}\\
     =( \gamma^n_{0,m}\gamma^n_{m,m}-\gamma^n_{0,m}s_md_m\gamma^n_{m,m}-\gamma^n_{0,m}s_md_{m+1}\gamma^n_{m,m}+\gamma^n_{0,m}s_md_m\gamma^n_{m,m})_{|N^n_{0,m+1}}\\
     =(id-\gamma^n_{0,m}s_m\gamma^n_{m,m}d_{m+1})_{|N^n_{0,m+1}}=id_{N^n_{0,m+1}}.
 \end{gather*}
Moreover, using \s identities it is easy to verify that $(-1)^nd_n\circ \gamma^n_{n,n}=d_0=\gamma^{n-1}_{n-1,n-1}\circ d_0$ for $n\in \mathbb{N}$. Thus, the maps $\gamma^n_{n,n}:N^n_{0,n}(V)\rightarrow N^n_{n,n}(V),\:n\in \mathbb{N}_0$, commute with the differentials and form a chain isomorphism.
\end{proof}

\section{Background from \cite{ZR}}
\noindent For the sake of better readability we recall several results and definitions from \cite{ZR} which are most relevant for the purpose of this paper.
\addtocontents{toc}{\protect\setcounter{tocdepth}{1}}
\subsection{Tangent bundles of graded manifolds} (see \cite[sec.\,2.2, 2.3]{ZR})
\begin{thm}
\label{rep}
Let $E_{\bu}\rightarrow M$ be a graded vector bundle. Then the presheaf $\text{Hom}(D,\mathfrak{S}(E_{\bu}))$ is representable and a connection on the dual $E_{\bu}^{\ast}$ induces a (non-canonical) natural isomorphism $Hom(D,\mathfrak{S}(E_{\bu}))\cong \text{Yon}(\mathfrak{S}(E_{\bu}\oplus E_{\bu}[1]\oplus TM[1]))$.
\end{thm}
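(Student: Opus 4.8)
The plan is to prove the statement by computing the functor of points of $\text{Hom}(D,\mathfrak{S}(E_{\bu}))$ explicitly, identifying it naturally with that of $\mathfrak{S}(E_{\bu}\oplus E_{\bu}[1]\oplus TM[1])$, and then invoking the Yoneda lemma. For a test graded manifold $\mathcal{T}$ one has $C(D\times\mathcal{T})=C(D)\otimes C(\mathcal{T})=C(\mathcal{T})\oplus\epsilon\,C(\mathcal{T})$ with $\epsilon$ odd of degree $-1$ and $\epsilon^{2}=0$. Hence a $\mathcal{T}$-point of $\text{Hom}(D,\mathfrak{S}(E_{\bu}))$ is an algebra morphism $\varphi\colon\Gamma S^{\bu}(E_{\bu}^{\ast})\to C(\mathcal{T})\oplus\epsilon\,C(\mathcal{T})$; writing $\varphi=\varphi_{0}+\epsilon\,\varphi_{1}$, multiplicativity of $\varphi$ together with $\epsilon^{2}=0$ forces $\varphi_{0}$ to be an algebra morphism $\Gamma S^{\bu}(E_{\bu}^{\ast})\to C(\mathcal{T})$ (that is, a $\mathcal{T}$-point of $\mathfrak{S}(E_{\bu})$) and $\varphi_{1}$ to be a $\varphi_{0}$-derivation raising degree by one. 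Conceptually $\text{Hom}(D,\mathfrak{S}(E_{\bu}))$ is the shifted tangent bundle $T[1]\mathfrak{S}(E_{\bu})$, and the claim is that a connection trivializes it as $\mathfrak{S}(E_{\bu}\oplus E_{\bu}[1]\oplus TM[1])$, in analogy with a connection splitting the double tangent bundle of an ordinary manifold.

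First I would work out the local model. Over a chart $V\subseteq M$, $V\cong\mathbb{R}^{p}$, trivializing $E_{\bu}$, the algebra $C(\mathfrak{S}(E_{\bu}))$ is freely generated over $C^{\infty}(V)$ by a homogeneous frame $\xi^{a}$ of $E_{\bu}^{\ast}$, so $\varphi$ is determined by
\[x^{i}\longmapsto\bar{x}^{i}+\epsilon\,\eta^{i},\qquad\xi^{a}\longmapsto\theta^{a}+\epsilon\,\zeta^{a},\]
where $\bar{x}^{i}\in C(\mathcal{T})_{0}$ is the body datum, $\theta^{a}$ is homogeneous of the degree of $\xi^{a}$, $\zeta^{a}$ one degree higher, and $\eta^{i}$ of degree one. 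These are exactly the coordinates of a $\mathcal{T}$-point of $\mathfrak{S}(E_{\bu}\oplus E_{\bu}[1]\oplus TM[1])$: $\bar{x}$ is the underlying map to $M$, the $\theta^{a}$ give the $E_{\bu}$-coordinates, the $\eta^{i}$ the $TM[1]$-coordinates, and the $\zeta^{a}$ the $E_{\bu}[1]$-coordinates. Thus the two presheaves agree locally, and naturally in $\mathcal{T}$; the question is whether the local identifications glue.

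The gluing is where the connection enters, and where the non-canonicity originates. Under a change of chart $x\mapsto y(x)$ together with a change of trivialization $\xi^{a}\mapsto g^{a}_{b}(x)\,\xi^{b}$, the expansion above shows that $\bar{x}$ transforms as a point of $M$, $\eta$ as a tangent vector (so the $TM[1]$-summand is already intrinsic) and $\theta$ as an $E_{\bu}$-valued point, whereas $\zeta^{a}$ acquires the additional term $\partial_{i}g^{a}_{b}\,\eta^{i}\theta^{b}$, obtained by Taylor-expanding $g^{a}_{b}(\bar{x}+\epsilon\,\eta)$ and using $\epsilon^{2}=0$; in other words the $E_{\bu}[1]$-part is a priori only well defined up to an extension by $E_{\bu}\otimes TM[1]$. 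The Christoffel symbols of a connection $\nabla$ on $E_{\bu}$ (equivalently on $E_{\bu}^{\ast}$) transform by exactly the compensating cocycle, so replacing $\zeta^{a}$ by the covariantized $\zeta^{a}+\Gamma^{a}_{bi}\,\eta^{i}\theta^{b}$ produces globally consistent transition behaviour and hence a natural isomorphism $\text{Hom}(D,\mathfrak{S}(E_{\bu}))\cong\text{Yon}(\mathfrak{S}(E_{\bu}\oplus E_{\bu}[1]\oplus TM[1]))$; representability is then immediate from Yoneda. A different choice of $\nabla$ gives a different but equally valid isomorphism, which is the precise sense in which it is non-canonical.

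I expect the main obstacle to be the two coordinate-free verifications underlying this picture: first, that the $\epsilon$-component $\varphi_{1}$ of a morphism is genuinely a vector field along the body map, so that the extra data assemble into a single copy of $TM[1]$ rather than something larger — this requires the local description of morphisms of $\mathbb{Z}$-graded manifolds, since $C^{\infty}(M)$ is not a free algebra and the body map has to be extracted intrinsically; and second, the explicit check that the transition anomaly $\partial_{i}g^{a}_{b}\,\eta^{i}\theta^{b}$ is cancelled precisely by the connection coefficients, together with the degree and sign bookkeeping forced by $\epsilon$ being odd of degree $-1$. Everything else — naturality in $\mathcal{T}$, which holds because all the relevant maps are given by pre- and post-composition, and the final appeal to the Yoneda lemma — is formal.
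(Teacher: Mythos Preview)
The paper does not actually prove this theorem: it is stated in Appendix~B as background recalled from \cite{ZR} (sections~2.2--2.3 there), with no proof given. So there is no proof in the paper to compare your proposal against.

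That said, your proposal is correct and is the standard argument, and it is essentially what \cite{ZR} does. The decomposition $\varphi=\varphi_{0}+\epsilon\,\varphi_{1}$ identifying $\text{Hom}(D,\mathcal{M})$ with $T[1]\mathcal{M}$, the local coordinate calculation, and the use of a connection to absorb the transition term $\partial_{i}g^{a}_{b}\,\eta^{i}\theta^{b}$ are exactly the right steps. The two obstacles you flag are the genuine ones and neither is serious: the body-map issue is resolved by the general fact that the degree-zero part of a graded algebra morphism out of $\Gamma S^{\bullet}(E_{\bu}^{\ast})$ restricts on $C^{\infty}(M)$ to a unital algebra map into the body of $C(\mathcal{T})$, hence is pullback along a smooth map $T\to M$; and the transition/sign bookkeeping is a direct check once you fix conventions.
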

\noindent In the case that $E_{\bu}$ is the trivial graded vector bundle over $M$, i.e. the associated graded manifold $\mathfrak{S}(E_{\bu})$ is just an ordinary manifold $M$, the above isomorphism $\text{Hom}(D,M)\cong \text{Yon}(\mathfrak{S}(TM[1]))$ does not depend on the choice of a connection and is natural in $M$. 
\begin{lemm}
\label{simpleD}
For a smooth manifold $M$ the presheaf $\text{Hom(D,M)}$ is canonically represented by $\mathfrak{S}(TM[1])$. Given a smooth map $\phi:M\rightarrow N$ between smooth manifolds, the pushforward 
\begin{gather*}
    \text{Yon}(\mathfrak{S}(TM[1]))\cong \text{Hom}(D,M)\xrightarrow[\phi_{\ast}]{}\text{Hom}(D,N)\cong \text{Yon}(\mathfrak{S}(TN[1]))
\end{gather*}
 corresponds to the map $\mathfrak{S}(T\phi[1]):\mathfrak{S}(TM[1])\rightarrow \mathfrak{S}(TN[1])$.
\end{lemm}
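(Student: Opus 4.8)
The plan is to compute the set of $\mathcal{T}$-points of $\text{Hom}(D,M)$ by hand and identify it, naturally in the test object $\mathcal{T}=(T,C(\mathcal{T}))$, with $\hom_{\catname{\mathbb{Z}Mfd}}(\mathcal{T},\mathfrak{S}(TM[1]))$. First I would use that $C(D)=\mathbb{R}[\epsilon]$ with $\epsilon^2=0$, so $C(D\times\mathcal{T})=C(D)\otimes C(\mathcal{T})=C(\mathcal{T})\oplus\epsilon\,C(\mathcal{T})$ as a $C(\mathcal{T})$-module. Because $M$ is an ordinary manifold, a morphism $D\times\mathcal{T}\to M$ is precisely a graded algebra morphism $C^\infty(M)\to C(D\times\mathcal{T})$; writing it as $f=f_0+\epsilon f_1$ and expanding $f(ab)=f(a)f(b)$ while using $\epsilon^2=0$ (and that $\epsilon f_1(\cdot)$ has even, hence central, degree), one finds this is equivalent to: $f_0\colon C^\infty(M)\to C(\mathcal{T})$ is a unital algebra morphism, and $f_1$ is an $f_0$-derivation of $C^\infty(M)$ valued in the homogeneous component of $C(\mathcal{T})$ determined by $\deg\epsilon$, the latter regarded as a $C^\infty(M)$-module via $f_0$. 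An algebra morphism $f_0$ is the same as a smooth map $g\colon T\to M$, and by the universal property of the smooth one-forms an $f_0$-derivation is the same as a $C^\infty(M)$-module morphism $\Omega^1_M\cong(TM[1])^\ast\to C(\mathcal{T})$ covering $g^\ast$. On the other hand, since $C(\mathfrak{S}(TM[1]))=\Gamma S^{\bu}((TM[1])^\ast)$ is freely generated over $C^\infty(M)$ by the odd generators $(TM[1])^\ast$, a morphism $\mathcal{T}\to\mathfrak{S}(TM[1])$ is exactly the datum of an algebra morphism $C^\infty(M)\to C(\mathcal{T})_0$ together with a $C^\infty(M)$-module morphism $(TM[1])^\ast\to C(\mathcal{T})$ over it. These two packages of data agree on the nose, and the resulting bijection $\text{Hom}(D,M)(\mathcal{T})\cong\hom(\mathcal{T},\mathfrak{S}(TM[1]))$ is manifestly natural in $\mathcal{T}$, since it only involves composition and the module structure; as no auxiliary choices enter, the representation is canonical. (Alternatively one may specialize Theorem \ref{rep} to the rank-zero bundle $E_{\bu}=0$ over $M$ and observe that its dual carries a unique connection, the zero one, so the isomorphism there is canonical; but the direct argument is shorter and is needed anyway for the functoriality claim.)

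For naturality in $M$, let $\phi\colon M\to N$ be smooth. The pushforward $\phi_\ast\colon\text{Hom}(D,M)\to\text{Hom}(D,N)$ sends a $\mathcal{T}$-point $\psi\colon D\times\mathcal{T}\to M$ to $\phi\circ\psi$, i.e.\ on function algebras it precomposes with $\phi^\ast\colon C^\infty(N)\to C^\infty(M)$. If $\psi^\ast=f_0+\epsilon f_1$ corresponds to the pair $(g,\xi)$ with $\xi(dh)=f_1(h)$, then $(\phi\circ\psi)^\ast=(f_0\circ\phi^\ast)+\epsilon(f_1\circ\phi^\ast)$ corresponds to the pair whose body map is $\phi\circ g$ and whose module morphism sends $dh\in\Omega^1_N$ to $f_1(\phi^\ast h)=\xi(\phi^\ast\,dh)$, using $d\circ\phi^\ast=\phi^\ast\circ d$. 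On the other side, $\mathfrak{S}(T\phi[1])\colon\mathfrak{S}(TM[1])\to\mathfrak{S}(TN[1])$ has pullback equal to $\phi^\ast$ on $C^\infty(N)$ and equal to pullback of one-forms $\Omega^1_N\to\Omega^1_M$ on the generators (this is exactly the map $\Gamma S^{\bu}((TN[1])^\ast)\to\Gamma S^{\bu}((TM[1])^\ast)$ induced by the vector bundle morphism $T\phi$ over $\phi$). Hence postcomposing a morphism $\mathcal{T}\to\mathfrak{S}(TM[1])$ given by $(g,\xi)$ with $\mathfrak{S}(T\phi[1])$ yields precisely $(\phi\circ g,\ \xi\circ\phi^\ast)$. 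Comparing, $\phi_\ast$ corresponds under the bijection above to $\text{Yon}(\mathfrak{S}(T\phi[1]))$, which is the second assertion.

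The only step that is more than bookkeeping is the identification of $f_0$-derivations $C^\infty(M)\to C(\mathcal{T})$ with $C^\infty(M)$-module maps out of $\Omega^1_M$: for a general commutative ring the Kähler differentials are not the smooth one-forms, so this needs the target module to be "geometric". Here that is automatic, since the function algebras of graded manifolds are of the form $\Gamma S^{\bu}(E^\ast)$, so each homogeneous component $C(\mathcal{T})_k$ is the module of global sections of a (graded) vector bundle, and for such modules the derivation/one-form correspondence is the standard one; I would simply invoke this, as is done implicitly in \cite{ZR}. The remaining degree and sign bookkeeping (the precise homogeneous degree in which $f_1$ and $(TM[1])^\ast$ sit, fixed by $\deg\epsilon$) I would keep to the strict minimum.
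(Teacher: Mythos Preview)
Your argument is correct. The paper itself does not give a standalone proof of this lemma: it is listed in Appendix~B as background from \cite{ZR}, and the only justification offered is the sentence preceding the lemma, which says that when $E_\bullet$ is the zero bundle over $M$ the isomorphism of Theorem~\ref{rep} is independent of the (unique) connection and hence canonical and natural in $M$. That is precisely the ``alternative'' you mention in your parenthetical remark. So the paper's route is: specialize Theorem~\ref{rep} to $E_\bullet=0$ and invoke the later compatibility statement (Corollary~\ref{moreD}/the theorem before it) for the functoriality in $\phi$.

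Your route is different and more self-contained: you compute $\mathrm{Hom}(D,M)(\mathcal T)$ directly as pairs $(f_0,f_1)$ with $f_0$ an algebra map and $f_1$ an $f_0$-derivation, and then identify this with $\hom(\mathcal T,\mathfrak S(TM[1]))$ via the universal property of smooth one-forms. This avoids appealing to Theorem~\ref{rep} entirely and makes the naturality in $\phi$ a one-line bookkeeping check rather than a consequence of the compatible-connections machinery. The one genuine analytic input you flag---that $f_0$-derivations $C^\infty(M)\to C(\mathcal T)_1$ factor through $\Omega^1_M$ rather than through the algebraic K\"ahler differentials---is handled correctly by your observation that $C(\mathcal T)_1$ is the module of sections of a vector bundle over the body $T$, for which this identification is standard. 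One small point you might make explicit: since $\deg\epsilon=-1$ in the paper's conventions and $f$ must be degree-preserving on $C^\infty(M)$, the derivation $f_1$ lands in $C(\mathcal T)_{1}$, which matches the degree of the generators $(TM[1])^\ast\cong\Omega^1_M$ sitting in degree~$+1$; you allude to this but leave it implicit.
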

\noindent Considering the next level $D_{[1]}=D^2$ of $D_{\bu}$, a choice of connection on $T^{\ast}M$ induces a natural isomorphism $\text{Hom}(D^2,M)\cong \text{Hom}(D,\mathfrak{S}(TM[1]))\cong \text{Yon}(\mathfrak{S}(TM[1]\oplus TM[1]\oplus TM[2]))$. In general, the presheaves $\text{Hom}(D^k,M),\:k\in \mathbb{N}$, called the \textbf{$\mathbf{k}$th iterated tangent bundles of $\mathbf{M}$}, are representable which follows inductively from Theorem \ref{rep}.
\begin{coro}
\label{rep2}
Let $M$ be a smooth manifold. For $k\in \mathbb{N}_{\geq 2}$ the presheaf $\text{Hom}(D^k,M)$ is representable and a connection on $T^{\ast}M$ induces a (non-canonical) isomorphism $\text{Hom}(D^k,M)\cong \text{Yon}(\mathfrak{S}(T[1]^kM))$ 
where $T[1]^kM$ is the graded vector bundle $T[1]^kM:=\bigoplus_{i=1}^k(TM[i])^{\oplus \binom{k}{i}}$.
\end{coro}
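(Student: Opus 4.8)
The plan is to induct on $k$, at each stage peeling off one factor of $D$ and invoking Theorem \ref{rep}.

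The first ingredient is an exponential law. For any graded manifold $\mathcal{T}$, the canonical isomorphism $D^k\times\mathcal{T}\cong D^{k-1}\times(D\times\mathcal{T})$ yields $\text{Hom}(D^k,M)(\mathcal{T})=\hom_{\catname{\mathbb{Z}Mfd}}(D^k\times\mathcal{T},M)\cong\hom_{\catname{\mathbb{Z}Mfd}}(D^{k-1}\times(D\times\mathcal{T}),M)=\text{Hom}(D^{k-1},M)(D\times\mathcal{T})$, naturally in $\mathcal{T}$. Hence, if $\text{Hom}(D^{k-1},M)$ is represented by a graded manifold $\mathcal{N}_{k-1}$, then $\text{Hom}(D^{k-1},M)(D\times\mathcal{T})\cong\hom_{\catname{\mathbb{Z}Mfd}}(D\times\mathcal{T},\mathcal{N}_{k-1})=\text{Hom}(D,\mathcal{N}_{k-1})(\mathcal{T})$, so $\text{Hom}(D^k,M)\cong\text{Hom}(D,\mathcal{N}_{k-1})$ as presheaves. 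In particular representability propagates upward: once $\text{Hom}(D^{k-1},M)$ is representable, Theorem \ref{rep} applied to $\mathcal{N}_{k-1}$ makes $\text{Hom}(D,\mathcal{N}_{k-1})$, and with it $\text{Hom}(D^k,M)$, representable.

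Now for the induction. The base case $k=1$ is Lemma \ref{simpleD}: $\text{Hom}(D,M)\cong\text{Yon}(\mathfrak{S}(TM[1]))=\text{Yon}(\mathfrak{S}(T[1]^1M))$, canonically. For the inductive step I would fix once and for all a connection on $T^{\ast}M$ and assume an isomorphism $\text{Hom}(D^{k-1},M)\cong\text{Yon}(\mathfrak{S}(E_{\bu}))$, where $E_{\bu}:=T[1]^{k-1}M=\bigoplus_{i=1}^{k-1}(TM[i])^{\oplus\binom{k-1}{i}}$. The dual $E_{\bu}^{\ast}=\bigoplus_{i=1}^{k-1}(T^{\ast}M[-i])^{\oplus\binom{k-1}{i}}$ inherits a connection from the chosen one on $T^{\ast}M$ (a connection passes to degree shifts and to finite direct sums), so Theorem \ref{rep} gives $\text{Hom}(D,\mathfrak{S}(E_{\bu}))\cong\text{Yon}(\mathfrak{S}(E_{\bu}\oplus E_{\bu}[1]\oplus TM[1]))$; composing with the exponential law isomorphism above yields $\text{Hom}(D^k,M)\cong\text{Yon}(\mathfrak{S}(E_{\bu}\oplus E_{\bu}[1]\oplus TM[1]))$.

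It remains to identify the graded vector bundle on the right, a purely combinatorial check. Since $E_{\bu}[1]=\bigoplus_{i=1}^{k-1}(TM[i+1])^{\oplus\binom{k-1}{i}}=\bigoplus_{i=2}^{k}(TM[i])^{\oplus\binom{k-1}{i-1}}$, the degree-$i$ summand of $E_{\bu}\oplus E_{\bu}[1]\oplus TM[1]$ has multiplicity $1+\binom{k-1}{1}$ for $i=1$, $\binom{k-1}{i}+\binom{k-1}{i-1}$ for $2\leq i\leq k-1$, and $\binom{k-1}{k-1}$ for $i=k$; by Pascal's rule these are $\binom{k}{1}$, $\binom{k}{i}$, $\binom{k}{k}$, so $E_{\bu}\oplus E_{\bu}[1]\oplus TM[1]=\bigoplus_{i=1}^{k}(TM[i])^{\oplus\binom{k}{i}}=T[1]^kM$, which closes the induction. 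I expect the only genuinely non-formal point to be the exponential law step — recognizing $\text{Hom}(D^k,M)$ as $\text{Hom}(D,-)$ of the $(k-1)$st iterated tangent bundle; after that everything is an instance of Theorem \ref{rep} together with binomial arithmetic, the one thing to watch being that a single connection on $T^{\ast}M$ genuinely suffices to run Theorem \ref{rep} at every stage, which it does because it restricts to each shifted summand of $E_{\bu}^{\ast}$.
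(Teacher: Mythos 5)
Your proposal is correct and follows essentially the same route the paper indicates: the text preceding Corollary \ref{rep2} sketches exactly this induction, identifying $\text{Hom}(D^k,M)$ with $\text{Hom}(D,-)$ applied to the representing object of $\text{Hom}(D^{k-1},M)$ (spelled out for $k=2$ as $\text{Hom}(D^2,M)\cong\text{Hom}(D,\mathfrak{S}(TM[1]))$) and then invoking Theorem \ref{rep}. Your Pascal's-rule bookkeeping confirming $T[1]^{k-1}M\oplus T[1]^{k-1}M[1]\oplus TM[1]=T[1]^kM$ is the right closing step and matches the stated formula $T[1]^kM=\bigoplus_{i=1}^k(TM[i])^{\oplus\binom{k}{i}}$.
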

\subsection{Compatible connections on (graded) vector bundles} (see \cite[sec.\,2.5]{ZR})\par
\vspace{1.7mm}
\noindent To formulate a functorial version of Theorem \ref{rep}, the concept of compatible connections on (graded) vector bundles is introduced.
\begin{defi}
   Let $E\rightarrow M$, $\tilde{E}\rightarrow N$ be smooth vector bundles, $\phi:M\rightarrow N$ be a smooth map and $\phi^{\ast}\tilde{E}$ be the pullback of $\tilde{E}$ along $\phi$. A \textbf{vector bundle comorphism} $E \dashleftarrow \tilde{E}$ over $\phi$ is a vector bundle morphism $\phi^{\ast}\tilde{E}\rightarrow E$ over $M$.
\end{defi}
\begin{defi}
Let $F:E\rightarrow \tilde{E}$ be a vector bundle morphism between vector bundles $E,\tilde{E}$ over the same base $M$ and let $J^1(E), J^1(\tilde{E})$ be the first jet bundles of $E,\tilde{E}$, respectively. Define $J^1F:J^1(E)\rightarrow J^1(\tilde{E})$ to be the vector bundle morphism over $M$ mapping a jet $J^1_ps\in J^1E_p,\:p\in M$, to the jet $J^1_pFs\in J^1\tilde{E}_p$.
\end{defi}
\begin{defi}
   Let $\psi:E\rightarrow J^1(E),\:\tilde{\psi}:\tilde{E}\rightarrow J^1(\tilde{E})$ be \textbf{linear  connections} on smooth vector bundles $E\rightarrow M$, $\tilde{E}\rightarrow N$ and let $\varphi:E\dashleftarrow \tilde{E}$ be a vector bundle comorphism over $\phi:M\rightarrow N$. Then $\psi,\tilde{\psi}$ are called \textbf{compatible} w.r.t. $\varphi$ if the diagram
   \begin{center}
       % https://tikzcd.yichuanshen.de/#N4Igdg9gJgpgziAXAbVABwnAlgFyxMJZABgBpiBdUkANwEMAbAVxiRACkA9ARgAoBRAJQgAvqXSZc+Qim7kqtRizb9R4kBmx4CRMtwX1mrRBx68AOubQALLJ2CW6cHCMt4GsYPxHCxErdJEcvrUhsomljZ2DuZOLm5YHjBeIqIKMFAA5vBEoABmAE4QALZIAMzUOBBIcopGbJb0BVEg1Ax0AEYwDAAKktoyIAVYmdY4avlFpYgATJXViGR14abcjXTNthMghSU180hLYcYgkditIO1dvf2BJsOj4347U+UHs22d3X0BOiYMMDy41CShOkVs9kczlc5ncnjOWFSIgoIiAA
\begin{tikzcd}
J^1(E)                                            & E \arrow[l, "\psi"']                                                            \\
J^1(\phi^{\ast}\tilde{E}) \arrow[u, "J^1\varphi"] & \phi^{\ast}\tilde{E} \arrow[u, "\varphi"'] \arrow[l, "\phi^{\ast}(\tilde{\psi})"]
\end{tikzcd}
   \end{center}
   commutes, where 
   %$\psi$, $\tilde{\psi}$ are regarded as sections of the canonical projections %$J^1(E)\rightarrow E$, $J^1(\tilde{E})\rightarrow \tilde{E}$ and 
   $\phi^{\ast}(\tilde{\psi}):\phi^{\ast}\tilde{E}\rightarrow J^1(\phi^{\ast}\tilde{E})$ denotes the pullback connection of $\tilde{\psi}$.
   Two connections $\psi$, $\tilde{\psi}$ defined on the duals of vector bundles $E\rightarrow M$, $\tilde{E}\rightarrow N$ are called \textbf{compatible w.r.t. a vector bundle morphism} $F:E\rightarrow \tilde{E}$ over $\phi:M\rightarrow N$ if they are compatible w.r.t. the natural vector bundle comorphism $E^{\ast}\dashleftarrow \tilde{E}^{\ast}$ corresponding to $F$.
   \begin{rema}
   \label{remaconn}
   \begin{enumerate}
    \renewcommand{\labelenumi}{\roman{enumi})}
       \item Let $E\rightarrow M,F\rightarrow N$ be vector bundles having compatible connections w.r.t. a vector bundle comorphism $\varphi:E\dashleftarrow F$ over $\phi:M\rightarrow N$. Let $U,V$ be open subsets of $M$, $N$ such that $\phi(U)\subseteq V$. Then the restricted connections on $E_{|U},\:F_{|V}$ are compatible w.r.t the restriction $\varphi:E_{|U}\dashleftarrow F_{|V}$.
       \item Let $E\rightarrow M$ be a vector bundle, $U\subseteq M$ be an open subset and $\eta$ be a connection on $E^{\ast}$. A connection $\psi$ on $E^{\ast}_{|U}$ is compatible to $\eta$ w.r.t. the embedding $E_{|U}\hookrightarrow E$ if and only if $\eta_{|U}=\psi$.
       \item Given a submersion $p:M\rightarrow N$ between smooth manifolds and a connection $\psi$ on $T^{\ast}N$, there exists a connection on $T^{\ast}M$ which is compatible to $\psi$ w.r.t. $Tp:TM\rightarrow TN$ (see \cite[Cor.\,2.33]{ZR}).
       \item By considering level-wise connections the above definitions and statements can be generalized for graded vector bundles in a straightforward manner. Connections on graded vector bundles are meant to be level-wise connections in the following.
   \end{enumerate}
   \end{rema}
\end{defi}
\begin{thm}
  Let $F_{\bu}:E_{\bu}\rightarrow \tilde{E}_{\bu}$ be a morphism between graded vector bundles covering the smooth map $\phi:M\rightarrow N$. Given compatible connections on $E_{\bu}^{\ast},\:\tilde{E}_{\bu}^{\ast}$ w.r.t. $F_{\bu}$, the pushforward $\mathfrak{S}(F_{\bu})_{\ast}:Hom(D,\mathfrak{S}(E_{\bu}))\rightarrow Hom(D,\mathfrak{S}(\tilde{E}_{\bu}))$ corresponds to the graded vector bundle morphism $F_{\bu}\oplus F_{\bu}[1]\oplus T\phi[1]:E_{\bu}\oplus E_{\bu}[1]\oplus TM[1]\rightarrow \tilde{E}_{\bu}\oplus \tilde{E}_{\bu}[1]\oplus TN[1]$. That is, the following diagram commutes
  \begin{center}
% https://tikzcd.yichuanshen.de/#N4Igdg9gJgpgziAXAbVABwnAlgFyxMJZABgBpiBdUkANwEMAbAVxiRAAkIBbACgBFSAHUFc6OABYAzAE50A1sADKAXx4BRAPrBhAIyYMGMHMoCUJkMtLpMufIRRkAjFVqMWbAJoEew0RJnySqqa2oJ6BkbKwhBozHAABCG6+obGyM7RsUwJACoAsukUZhZWIBjYeAREAMzkLvTMrIgc3PxCImJSsgoqPoJ4DLDAaspayRHGxZbWFXY1pM7UDe7NXmB9fl2BvcIDQyNjYSmRmXG7WIMww6Oh4anKhafZ8TkAcoVm8RYuMFAA5vAiKAZNwkGQQDgIEhHEs3E0QMIAMYEP4lYHSUGIABM1EhSFqrkabCRKLRIBBXDBuKh2NhROaG06AR6qgAYoc7pEzIc6HBjGSKdDqfjqAw6DoYAwAAo2Sr2EDSLB-cQ4EB0lYgRn+bpBHjs27HYxPBL68b3R6CGJxF7CNDiLAfEw8vnKb7KIA
\begin{tikzcd}
{Hom(D,\mathfrak{S}(E_{\bullet}))} \arrow[d, "\cong"] \arrow[rrr, "(\mathfrak{S}(F_{\bullet}))_{\ast}"]                                                    &  &  & {Hom(D,\mathfrak{S}(\tilde{E}_{\bullet}))} \arrow[d, "\cong"]                     \\
{Yon(\mathfrak{S}(E_{\bullet}\oplus E_{\bullet}[1]\oplus TM[1]))} \arrow[rrr, "{(\mathfrak{S}(F_{\bullet}\oplus F_{\bullet}[1]\oplus T\phi[1]))_{\ast}}"'] &  &  & {Yon(\mathfrak{S}(\tilde{E}_{\bullet}\oplus\tilde{E}_{\bullet}[1]\oplus TN[1])) }
\end{tikzcd}
  \end{center}
  where the vertical arrows are the isomorphisms induced by the connections on $E_{\bu}^{\ast},\:\tilde{E}_{\bu}^{\ast}$ according to Theorem\,\ref{rep}.
\end{thm}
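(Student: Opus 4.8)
The plan is to unwind both sides of the asserted square through the explicit form of the isomorphism produced by the proof of Theorem \ref{rep}, and then to verify the claim block by block, the compatibility hypothesis being exactly what makes the blocks match up without mixing. Fix a test graded manifold $\mathcal{T}$, so that a point of $\mathrm{Hom}(D,\mathfrak{S}(E_{\bu}))(\mathcal{T})$ is an algebra morphism $\Phi\colon\Gamma S^{\bu}(E_{\bu}^{\ast})\to C(D\times\mathcal{T})=C(\mathcal{T})\oplus\epsilon\,C(\mathcal{T})$, where $\epsilon$ is odd of degree $-1$ and $\epsilon^2=0$. Writing $\Phi=\Phi_0+\epsilon\,\Phi_1$ exhibits $\Phi$ as a pair consisting of an algebra morphism $\Phi_0\colon\Gamma S^{\bu}(E_{\bu}^{\ast})\to C(\mathcal{T})$ (a point of $\mathfrak{S}(E_{\bu})$ over $\mathcal{T}$) together with a degree $+1$ $\Phi_0$-derivation $\Phi_1$. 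As in the proof of Theorem \ref{rep}, the chosen connection on $E_{\bu}^{\ast}$ splits the first jet sequence $0\to T^{\ast}M\otimes E_{\bu}^{\ast}\to J^1E_{\bu}^{\ast}\to E_{\bu}^{\ast}\to 0$, and via this splitting the data $(\Phi_0,\Phi_1)$ decomposes into four pieces: the body map $\underline{\mathcal{T}}\to M$, the $E_{\bu}$-component (the $\epsilon$-free part of $\Phi$ on the generators $\Gamma(E_{\bu}^{\ast})$), the $E_{\bu}[1]$-component (the horizontal part of $\Phi_1$ on $\Gamma(E_{\bu}^{\ast})$), and the $TM[1]$-component ($\Phi_1$ on $C^{\infty}(M)$). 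This is precisely the natural bijection $\mathrm{Hom}(D,\mathfrak{S}(E_{\bu}))(\mathcal{T})\cong\hom(\mathcal{T},\mathfrak{S}(E_{\bu}\oplus E_{\bu}[1]\oplus TM[1]))$, natural in $\mathcal{T}$ because it is built from pre- and post-composition.

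Next I would compute the effect of $(\mathfrak{S}(F_{\bu}))_{\ast}$, which sends $\Phi$ to $\Phi\circ\mathfrak{S}(F_{\bu})^{\ast}$, where $\mathfrak{S}(F_{\bu})^{\ast}\colon\Gamma S^{\bu}(\tilde E_{\bu}^{\ast})\to\Gamma S^{\bu}(E_{\bu}^{\ast})$ is determined by $g\mapsto\phi^{\ast}g$ on $C^{\infty}(N)$ and $\xi\mapsto\varphi(\phi^{\ast}\xi)$ on $\Gamma(\tilde E_{\bu}^{\ast})$, with $\varphi\colon E_{\bu}^{\ast}\dashleftarrow\tilde E_{\bu}^{\ast}$ the comorphism over $\phi$ associated to $F_{\bu}$. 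Composing and re-decomposing into the four pieces: the body becomes $\underline{\mathcal{T}}\to M\xrightarrow{\phi}N$; the $\epsilon$-free part on $\Gamma(\tilde E_{\bu}^{\ast})$ becomes $\Phi_0\circ\varphi\circ\phi^{\ast}$, which under the identification is the $E_{\bu}$-component followed by $F_{\bu}$; the $\Phi_1$-part on $C^{\infty}(N)$ becomes $\Phi_1\circ\phi^{\ast}$, i.e. the $TM[1]$-component followed by $T\phi[1]$; and the $\Phi_1$-part on $\Gamma(\tilde E_{\bu}^{\ast})$ becomes $\Phi_1\circ\varphi\circ\phi^{\ast}$. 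It remains to check that, after projecting onto the horizontal complements with respect to the two connections, this last term is exactly the $E_{\bu}[1]$-component followed by $F_{\bu}[1]$, with no spurious $TM[1]$-contribution. This is where the hypothesis is used: the commuting square in the definition of compatibility of $\psi$ and $\tilde\psi$ w.r.t.\ $\varphi$ says precisely that $J^1\varphi$ intertwines the horizontal splittings of $J^1E_{\bu}^{\ast}$ and $\phi^{\ast}J^1\tilde E_{\bu}^{\ast}$; hence $\varphi$ carries horizontal part to horizontal part and the $T^{\ast}M\otimes E_{\bu}^{\ast}$-summand to the corresponding one, yielding the claimed block-diagonal form $F_{\bu}\oplus F_{\bu}[1]\oplus T\phi[1]$. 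Naturality of the whole identification in $\mathcal{T}$ then upgrades this pointwise statement to the asserted commuting square of presheaves.

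The step I expect to be the main obstacle is the precise bookkeeping of the $\epsilon$-linear parts and the claim that "compatible connections $\Rightarrow$ the horizontal splitting is intertwined by $\varphi$". Concretely one has to express $\Phi_1|_{\Gamma(E_{\bu}^{\ast})}$ as a map factoring through $J^1E_{\bu}^{\ast}$ — legitimate because $\epsilon^2=0$ forces $\Phi_1$ to depend only on first-order data of $\Phi_0$ along the body map — identify the connection's role as the choice of a splitting $E_{\bu}^{\ast}\to J^1E_{\bu}^{\ast}$, and then feed in the compatibility square together with the naturality of $J^1$ under $\varphi$. Should the coordinate-free argument become unwieldy, a safe fallback is to prove the statement first over trivializing charts for $E_{\bu}$ and $\tilde E_{\bu}$, where $\mathfrak{S}(F_{\bu})^{\ast}$, the connections, and the isomorphism of Theorem \ref{rep} are all explicit and compatibility reduces to the usual transformation rule for Christoffel symbols, and then to note that both sides of the desired square are morphisms of presheaves of graded manifolds and hence glue over the cover.
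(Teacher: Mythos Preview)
The paper does not actually prove this theorem: it appears in Appendix~B, which explicitly recalls background results from \cite{ZR} without proof, and the statement is immediately followed by Corollary~\ref{moreD} with no intervening argument. So there is nothing in the present paper to compare your attempt against.

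That said, your outline is the standard and correct approach to this kind of result, and is in the spirit of how such statements are handled in \cite{ZR}: expand a $\mathcal{T}$-point of $\mathrm{Hom}(D,\mathfrak{S}(E_\bullet))$ as $\Phi_0+\epsilon\Phi_1$, use the connection to separate the derivation $\Phi_1$ into its vertical and base-direction parts, and then invoke the compatibility square to see that $\varphi$ intertwines the horizontal splittings so the induced map is block-diagonal. Your identification of the ``main obstacle'' is accurate --- the honest content is precisely that compatibility of $\psi,\tilde\psi$ forces $J^1\varphi$ to respect the two splittings of the jet sequences --- and your local-chart fallback is a perfectly acceptable way to discharge it. The only caution is that the phrase ``$\Phi_1$ depends only on first-order data of $\Phi_0$'' is slightly imprecise as stated; what you need is that a degree~$+1$ derivation along $\Phi_0$ from $\Gamma S^\bullet(E_\bullet^\ast)$ is determined by its restriction to generators $C^\infty(M)\oplus\Gamma(E_\bullet^\ast)$, and on $\Gamma(E_\bullet^\ast)$ it is $C^\infty(M)$-linear up to the base-direction term governed by $\Phi_1|_{C^\infty(M)}$ --- this is exactly what the jet bundle packaging encodes, so your plan goes through once that is made explicit.
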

\begin{coro}
\label{moreD}
Let $p:M\rightarrow N$ be a submersion between smooth manifolds and suppose that there exist compatible connections on $T^{\ast}M,\:T^{\ast}N$ w.r.t. the tangent map $Tp:TM\rightarrow TN$. Then, applying Corollary \ref{rep2}, the pushforward $p_{\ast}:Hom(D^k,M)\rightarrow Hom(D^k,N),\:k\in \mathbb{N}_{\geq 2}$, corresponds to the graded vector bundle morphism $T[1]^kp:=\bigoplus_{i=1}^k(Tp[i])^{\oplus \binom{k}{i}}:T[1]^kM\rightarrow T[1]^kN$.
\end{coro}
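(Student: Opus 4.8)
The plan is to prove the corollary by induction on $k$, combining the inductive construction of the iterated tangent bundles in Corollary \ref{rep2} with the functorial version of Theorem \ref{rep} recalled just above.

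For the base case I would take $k=1$: there $T[1]^1M=TM[1]$ and $T[1]^1p=Tp[1]$, and the assertion is exactly Lemma \ref{simpleD} — and notably no choice of connection enters. (Equivalently one may start at $k=2$, where $\mathrm{Hom}(D^2,M)\cong \mathrm{Hom}(D,\mathfrak{S}(TM[1]))$ and Theorem \ref{rep} applied to $E_{\bu}=TM[1]$ produces the representing object $\mathfrak{S}(TM[1]\oplus TM[2]\oplus TM[1])=\mathfrak{S}(T[1]^2M)$, while Lemma \ref{simpleD} together with the functorial Theorem \ref{rep} identifies $p_{\ast}$ with $Tp[1]\oplus Tp[2]\oplus Tp[1]=T[1]^2p$.)

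For the inductive step, assume the claim for $k-1\ge 1$. First I would recall how the isomorphism $\mathrm{Hom}(D^k,M)\cong\mathrm{Yon}(\mathfrak{S}(T[1]^kM))$ of Corollary \ref{rep2} is assembled: the currying identification $\mathrm{Hom}(D^k,M)=\mathrm{Hom}(D\times D^{k-1},M)\cong\mathrm{Hom}(D,\mathfrak{S}(T[1]^{k-1}M))$ — which holds because $\mathrm{Hom}(D^{k-1},M)$ is representable, by $\mathfrak{S}(T[1]^{k-1}M)$ — followed by Theorem \ref{rep} applied to the graded vector bundle $E_{\bu}=T[1]^{k-1}M$ over $M$, equipped with the connection on $E_{\bu}^{\ast}$ obtained from the chosen connection on $T^{\ast}M$ by copying it onto each direct summand and shift. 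The elementary identity $\binom{k-1}{i}+\binom{k-1}{i-1}=\binom{k}{i}$ then identifies $E_{\bu}\oplus E_{\bu}[1]\oplus TM[1]=T[1]^{k-1}M\oplus T[1]^{k-1}M[1]\oplus TM[1]$ with $T[1]^kM$. Since the currying identification is natural in the manifold, it intertwines the pushforward $p_{\ast}$ on $\mathrm{Hom}(D^k,-)$ with the pushforward on $\mathrm{Hom}(D,-)$ along the map $\mathrm{Hom}(D^{k-1},M)\to\mathrm{Hom}(D^{k-1},N)$ induced by $p$; by the induction hypothesis the latter is $\mathfrak{S}(T[1]^{k-1}p)_{\ast}$. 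I would then apply the functorial form of Theorem \ref{rep} to the graded vector bundle morphism $F_{\bu}:=T[1]^{k-1}p\colon T[1]^{k-1}M\to T[1]^{k-1}N$ over $p$, obtaining that $\mathfrak{S}(F_{\bu})_{\ast}$ — hence $p_{\ast}$ at level $k$ — corresponds to $F_{\bu}\oplus F_{\bu}[1]\oplus Tp[1]$; by the same binomial bookkeeping this is precisely $T[1]^kp$.

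The hard part will be verifying, at each inductive stage, that the hypotheses of the functorial Theorem \ref{rep} are actually met — that is, that the connections induced on $(T[1]^{k-1}M)^{\ast}$ and $(T[1]^{k-1}N)^{\ast}$ from the given compatible connections on $T^{\ast}M$, $T^{\ast}N$ remain compatible with respect to $T[1]^{k-1}p$. This reduces to the statement that compatibility of linear connections (in the comorphism sense of Remark \ref{remaconn}) is stable under finite direct sums and degree shifts, which in turn follows from the level-wise definition of connections on graded vector bundles in Remark \ref{remaconn} iv) together with the fact that the first jet functor and the comorphism construction respect finite direct sums; concisely, the assignment sending a connection on $T^{\ast}M$ to the induced connection on the iterated tangent bundle is functorial for vector bundle morphisms. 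Everything else — the currying isomorphism, its naturality in $M$, and the degree count — is routine.
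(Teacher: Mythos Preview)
Your inductive argument is correct and is exactly the route the paper implicitly intends: in the paper Corollary~\ref{moreD} is stated in Appendix~B as background recalled from \cite{ZR} and carries no proof of its own, but its placement immediately after the functorial form of Theorem~\ref{rep} signals precisely the induction you carry out --- curry $\mathrm{Hom}(D^k,M)\cong\mathrm{Hom}(D,\mathfrak{S}(T[1]^{k-1}M))$, apply the functorial Theorem~\ref{rep} to $F_{\bu}=T[1]^{k-1}p$, and use $\binom{k-1}{i}+\binom{k-1}{i-1}=\binom{k}{i}$ to repackage $F_{\bu}\oplus F_{\bu}[1]\oplus Tp[1]$ as $T[1]^kp$. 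Your identification of the only nontrivial verification --- that compatibility of the connections on $T^{\ast}M$, $T^{\ast}N$ propagates to the induced connections on $(T[1]^{k-1}M)^{\ast}$, $(T[1]^{k-1}N)^{\ast}$ via the level-wise definition in Remark~\ref{remaconn}~iv) --- is accurate and the reduction to stability under direct sums and shifts is the right one.
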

\subsection{}
 \noindent As stated in the proof of Lemma \ref{weakercase}, under the weaker the assumption that horn projections are submersions (instead of surjective submersions) the inductive proof of Theorem \ref{yoooo} in \cite{ZR} can be straightforwardly extended, with the exception of Lemma 3.16 in \cite{ZR}. Assuming that horn projections are submersions, Lemma 3.16 is proven below.
\begin{lemm}
\label{proofanhang}
Let $X_{\bu}$ be a local Lie $\infty$-groupoid with given compatible connections on $T^{\ast}X_k$, $T^{\ast}\bigwedge^k_0(X)$ for $k\geq 2$. Further, let $T[1]^kp^k_0:T[1]^kX_k\rightarrow T[1]^k\bigwedge^k_0(X),\:k\in \mathbb{N}$, be the morphisms between graded manifolds which correspond to $(p^k_0)_{\ast}:\text{Hom}(D^k,X_k)\rightarrow \text{Hom}(D^k,\bigwedge^k_0(X))$ according to Lemma \ref{simpleD} and Corollary \ref{moreD}, and let $\widetilde{T[1]^kp^k_0},\:k\in \mathbb{N}$, be the respective induced maps $T[1]^kX_k/TX_k[k]\rightarrow T[1]^k\bigwedge^k_0(X)/T\bigwedge^k_0(X)[k]$. Lastly, let $q_k$, $k\in \mathbb{N}$, denote the projections $T[1]^k\bigwedge^k_0(X)\rightarrow T[1]^k\bigwedge^k_0(X)/$ $T\bigwedge^k_0(X)[k]$. Then for $k\in \mathbb{N}$, the fiber product associated to the diagram of graded manifolds
\begin{center}
  % https://tikzcd.yichuanshen.de/#N4Igdg9gJgpgziAXAbVABwnAlgFyxMJZABgBoBGAXVJADcBDAGwFcYkQAVZKgPQGsAGgH0+Aeg7C+yPpRABfUuky58hFACYK1Ok1bsuvPgB0jAIywBzAO4woFmPyHEAFAICU4k+eu37jl+7SsgpK2HgERJrE2gwsbIic3JT8XpY2dg58Tq5u8tq+8ESgAGYAThAAtkhkIDgQSOQ0jPSmMIwACsrhaiCllgAWOCA0sXoJJlZYsHiMsMAGyXxo-nLyiiBllUiatfWIjTpx7ACOInlyQA
\begin{tikzcd}
                                                             &  & {T[1]^k\bigwedge^k_0(X)} \arrow[d, "q_k"]     \\
{T[1]^kX_k/TX_k[k]} \arrow[rr, "{\widetilde{T[1]^kp^k_0}}"'] &  & {T[1]^k\bigwedge^k_0(X)/T\bigwedge^k_0(X)[k]}
\end{tikzcd}
\end{center}
exists and is given by $T[1]^kX_k/\text{ker}\,Tp^k_0[k]$.
\end{lemm}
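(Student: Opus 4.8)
The plan is to verify directly that $T[1]^kX_k/\operatorname{ker}Tp^k_0[k]$, together with the obvious maps, satisfies the universal property of the fiber product associated to the given square. First I would set up notation: let $V := TX_k[k]$, $W := T\bigwedge^k_0(X)[k]$, write $q_k: T[1]^k\bigwedge^k_0(X) \to T[1]^k\bigwedge^k_0(X)/W$ for the quotient, and note that $T[1]^kp^k_0$ restricts to $Tp^k_0[k]: V \to W$ on the top-degree summand (this is where the hypothesis that $p^k_0$ is a submersion enters — it makes $\operatorname{ker}Tp^k_0[k]$ a well-defined subbundle, and makes all the quotients smooth graded vector bundles). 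The candidate cone consists of the map $\pi: T[1]^kX_k/\operatorname{ker}Tp^k_0[k] \to T[1]^kX_k/V = T[1]^kX_k/TX_k[k]$ (the further quotient, well-defined since $\operatorname{ker}Tp^k_0[k]\subseteq V$) composed into the bottom, and the map $\lambda: T[1]^kX_k/\operatorname{ker}Tp^k_0[k] \to T[1]^k\bigwedge^k_0(X)$ induced by $T[1]^kp^k_0$ (well-defined since $T[1]^kp^k_0$ kills $\operatorname{ker}Tp^k_0[k]$ by construction — on the top summand it is $Tp^k_0[k]$ with that exact kernel, on lower summands there is nothing to check). One checks $q_k\circ\lambda = \widetilde{T[1]^kp^k_0}\circ\pi$ by unwinding definitions.

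Next I would check the universal property. Since $\mathfrak{S}: \catname{\mathbb{Z}_{<0}VB}\to\catname{\mathbb{Z}Mfd}$ preserves limits (\cite[Prop.\,2.25]{ZR}), it suffices to work in the category of graded vector bundles, where a fiber product of a cospan of vector bundle morphisms over the same base is computed degreewise as an ordinary fiber product of vector spaces in each fiber. So the statement reduces to the following linear-algebra fact, to be applied fiberwise: given the square with maps $Tp^k_0[k]$ on top degree and identities elsewhere, the fiber product of $q_k$ and $\widetilde{T[1]^kp^k_0}$ is $T[1]^kX_k/\operatorname{ker}Tp^k_0[k]$. Concretely, in each degree $i<k$ the two quotients $T[1]^k(-)/(-)[k]$ agree with $T[1]^k(-)$ and the maps are unchanged, so the fiber product in those degrees is just the pullback of $T[1]^ip^k_0$-type maps, which already sits inside both sides with no quotient; in degree $k$ one is pulling back $W \xrightarrow{\operatorname{id}} W/W = 0 \xleftarrow{} TX_k[k]/TX_k[k]=0$ — wait, more precisely the degree-$k$ part of the bottom-left object is $TX_k[k]/TX_k[k]=0$ and of the bottom-right is $T[1]^k\bigwedge^k_0(X)/T\bigwedge^k_0(X)[k]$ whose degree-$k$ part is $0$, so the degree-$k$ component of the fiber product is just the degree-$k$ part of $T[1]^k\bigwedge^k_0(X)$, namely $W = T\bigwedge^k_0(X)[k]$, pulled back trivially — and this matches the degree-$k$ part of $T[1]^kX_k/\operatorname{ker}Tp^k_0[k]$, which is $TX_k[k]/\operatorname{ker}Tp^k_0[k]\cong \operatorname{im}(Tp^k_0)[k] = T\bigwedge^k_0(X)[k]$ by the submersion hypothesis. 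The remaining summands in degrees $<k$ match because neither quotient affects them. Assembling these degreewise identifications gives the claimed isomorphism of graded vector bundles, hence of graded manifolds.

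To finish, I would spell out the universal-property argument abstractly rather than relying only on the degreewise computation, to be safe: given a graded manifold $\mathcal{T}$ with maps $a: \mathcal{T}\to T[1]^kX_k/TX_k[k]$ and $b: \mathcal{T}\to T[1]^k\bigwedge^k_0(X)$ satisfying $q_k\circ b = \widetilde{T[1]^kp^k_0}\circ a$, I want a unique $c: \mathcal{T}\to T[1]^kX_k/\operatorname{ker}Tp^k_0[k]$ with $\pi\circ c = a$ and $\lambda\circ c = b$. Since $T[1]^kX_k/\operatorname{ker}Tp^k_0[k]$ is itself, degreewise, the fiber product of $\pi$ and $\lambda$ over $T[1]^kX_k/TX_k[k]$ and $T[1]^k\bigwedge^k_0(X)$ along $\widetilde{T[1]^kp^k_0}$ and $q_k$ (this is exactly the content of the previous paragraph, re-read as a universal property), such a $c$ exists and is unique.

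I expect the main obstacle to be purely bookkeeping: correctly identifying, in each degree $i$, which summands of $T[1]^k(-)=\bigoplus_{i=1}^k(T(-)[i])^{\oplus\binom{k}{i}}$ survive the quotient by the top-degree piece $T(-)[k]$ (only the single $i=k$ copy is affected), and checking that the induced map $\widetilde{T[1]^kp^k_0}$ together with $q_k$ really does glue to give precisely the kernel $\operatorname{ker}Tp^k_0[k]$ and not something larger — this is where the submersion hypothesis on $p^k_0$ (replacing the surjective-submersion hypothesis of the original Lemma 3.16 in \cite{ZR}) must be used, namely to guarantee that $\operatorname{ker}Tp^k_0$ is a smooth subbundle and that $TX_k/\operatorname{ker}Tp^k_0\cong T\bigwedge^k_0(X)$ fiberwise, which is exactly what makes the degree-$k$ component of the fiber product come out right. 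The rest is a routine diagram chase that transfers verbatim from the surjective-submersion case.
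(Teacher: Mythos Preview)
Your approach is the same as the paper's: reduce to $\mathbb{Z}_{<0}$-graded vector bundles via \cite[Prop.\,2.25]{ZR}, then compute the fiber product degreewise, with the submersion hypothesis entering only in degree $-k$. However, your degree-$k$ computation contains a slip at precisely the point where the distinction between ``submersion'' and ``surjective submersion'' matters, so it is worth correcting.

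The three graded vector bundles in the cospan are \emph{not} over the same base: $T[1]^kX_k/TX_k[k]$ lies over $X_k$ while the other two lie over $\bigwedge^k_0(X)$, with underlying maps $p^k_0$ and $\mathrm{id}$. Hence the base of the fiber product is $X_k\times_{\bigwedge^k_0(X)}\bigwedge^k_0(X)\cong X_k$, and in degree $-k$ the pullback of $T\bigwedge^k_0(X)\to 0_{\bigwedge^k_0(X)}\leftarrow 0_{X_k}$ is the pullback bundle $(p^k_0)^{\ast}(T\bigwedge^k_0(X))$ over $X_k$, not $T\bigwedge^k_0(X)$ itself. Your line $TX_k/\ker Tp^k_0\cong \operatorname{im}(Tp^k_0)=T\bigwedge^k_0(X)$ is false when $p^k_0$ is merely a submersion: the total-space image need not be all of $T\bigwedge^k_0(X)$. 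What you actually need (and what the paper uses) is $TX_k/\ker Tp^k_0\cong (p^k_0)^{\ast}(T\bigwedge^k_0(X))$, which holds because a submersion gives fiberwise surjections $T_xX_k\twoheadrightarrow T_{p^k_0(x)}\bigwedge^k_0(X)$. With that correction in place, your argument matches the paper's.
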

\begin{proof}
Like in the proof of Lemma \ref{weakercase}, it suffices to compute the fiber product in the category of $\mathbb{Z}_{<0}$-graded vector bundles. We first notice that $T[1]^kp^k_0:T[1]^kX_k/\text{ker}\,Tp^k_0[k]\rightarrow T[1]^k\bigwedge^k_0(X)$ and the map $T[1]^kX_k/\text{ker}\,Tp^k_0[k]\rightarrow T[1]^kX_k/TX_k[k]$ make the above diagram commute. When checking that $T[1]^kX_k/\text{ker}\,Tp^k_0[k]$ is the fibre product, it is sufficient to do so in each 
%(nontrivial) 
degree.\\
Denoting $T^k_{-i}(X_k):=TX_k^{\oplus \binom{k}{i}},\:T^k_{-i}(\bigwedge^k_0(X)):=T\bigwedge^k_0(X)^{\oplus \binom{k}{i}}$ to be the $(-i)$-components of $T[1]^kX_k/\text{ker}\,Tp^k_0[k]$ for $1\leq i\leq k-1$, one has the following
%obvious
pullback diagrams
\begin{center}
% https://tikzcd.yichuanshen.de/#N4Igdg9gJgpgziAXAbVABwnAlgFyxMJZAJgBoBGAXVJADcBDAGwFcYkQAVAPQGsB9YAFosAXwAUAHQkAjLAHMA7jChyYvPgAYxADQCUukCNLpMufIRQaK1Ok1btu-IaJ18eBoyex4CRKxpsGFjZETnVncW03D2MQDG9zIjIAmiD7UMcBYXEpWUVlVXUtPQ8bAvgiUAAzACcIAFskchocCCQrW2D2MQ40It0uYCkINBY4XKxIeuAeEWBRERAaRnppGEYABVMfCxAa+QALHENY2oakMhBWpABmVLsQkB6+-g0BoYkRsYmpmbmFk7VOqNRB3K5tRAdRiTR5wCDQqBLEAHGD0RGIMDMRiMFr0LCMdiQMBse5dUJYRGeEBnEGXa6IZqddIgClIlZrTbbRKhfZyI6GSgiIA
\begin{tikzcd}
T^k_{-i}(X_k) \arrow[rr, "(Tp^k_0)^{\oplus\binom{k}{i}}"] \arrow[d, "id"'] &  & T^k_{-i}(\bigwedge^k_0(X)) \arrow[d, "id"] \\
T^k_{-i}(X_k) \arrow[rr, "(Tp^k_0)^{\oplus\binom{k}{i}}"']                 &  & T^k_{-i}(\bigwedge^k_0(X))                
\end{tikzcd}.
\end{center}
\noindent In degree $-k$ the pullback diagram is given by 
\begin{center}
 % https://tikzcd.yichuanshen.de/#N4Igdg9gJgpgziAXAbVABwnAlgFyxMJZABgBpiBdUkANwEMAbAVxiRAAo0A9AawH1iASi7AAOqLpwcAX3YAVcQCMsAcwDuMKCpi8B7ABqDBIaaXSZc+QijIBGKrUYs2xE2ZAZseAkVul71PTMrIggrqbmXla+5A5BzqEKosrqmtq6xAbG0g5p8ESgAGYAThAAtkh+IDgQSABM1Ax0ijAMAAoW3tYgxaoAFjgggU4hHGTc-EJuRaUViADM1DX1w8Fs7GRYUHxiyaoaWjqTWdLZ7iXlSGTVtYhVTS3tndGhvSoDQ45roRukWzv6Pg8U7TEAXObXZYLHLSIA
\begin{tikzcd}
(p^k_0)^{\ast}(T\bigwedge^k_0(X)) \arrow[d, "{(0,id_{X_k})}"'] \arrow[r] & T\bigwedge^k_0(X) \arrow[d, "{(0,id_{\bigwedge^k_0(X)})}"] \\
0 \arrow[r, "{(0,p^k_0)}"']                                              & 0                                                         
\end{tikzcd}
\end{center}
where the pullback bundle $(p^k_0)^{\ast}(T\bigwedge^k_0(X))$ is isomorphic to $TX_k/\text{ker}\,Tp^k_0$ since the horn projection $p^k_0$ is a submersion.
\end{proof}

\addtocontents{toc}{\protect\setcounter{tocdepth}{2}}

%\addcontentsline{toc}{section}{References} 

\end{document}